\numberwithin{equation}{section}
\newcommand{\N}{\mathbb N}
\newcommand{\R}{\mathbb R}
\newcommand{\sP}{\mathcal{P}}
\def\XXint#1#2#3{{\setbox0=\hbox{$#1{#2#3}{\int}$}
\vcenter{\hbox{$#2#3$}}\kern-.5\wd0}}
\newcommand{\T}{\mathbb{T}}
\numberwithin{equation}{section}
\newtheorem{thm}{Theorem}[section]
\newtheorem{lem}[thm]{Lemma}
\newtheorem{cor}[thm]{Corollary}
\newtheorem{prop}[thm]{Proposition}
\newtheorem{assumption}[thm]{Assumptions}
\theoremstyle{definition}
\newtheorem{defn}[thm]{Definition}
\newtheorem{rmk}[thm]{Remark}
\def\smallnegint{\mathop{\int\mkern-13mu
        \raise.5ex\hbox{${\scriptscriptstyle\diagup}$}}\nolimits}
\def\div{\operatorname{div}}
\def\ssetminus{\,\raise.4ex\hbox{$\scriptstyle\setminus$}\,}
\newcommand{\be}{\begin{equation}}
\newcommand{\ee}{\end{equation}}
\newcommand{\bc}{\begin{case}}
\newcommand{\ec}{\end{cases}}
\newcommand{\bs}{\begin{split}}
\newcommand{\es}{\end{split}}
\newcommand{\norm}[1]{\left\Vert#1\right\Vert}
\renewcommand{\d}{d}
\newcommand{\Rd}{{\mathbb{R}^\d}}
\renewcommand{\bar}{\overline}
\renewcommand{\tilde}{\widetilde}
\newcommand{\del}{\partial}
\newcommand{\Id}{\mathrm{Id}}
\newcommand{\Lip}{\mathrm{Lip}}
\newcommand{\loc}{\mathrm{loc}}
\newcommand{\mcl}{\mathcal}
\newcommand{\oo}{\infty}
\newcommand{\Tan}{\mathrm{Tan}}
\title[On Viscosity Solutions of Hamilton-Jacobi Equations in the Wasserstein space]{On Viscosity Solutions of Hamilton-Jacobi Equations in the Wasserstein space and the Vanishing Viscosity Limit}
\date{\today}
\author[Giacomo Ceccherini Silberstein]{Giacomo Ceccherini Silberstein  }
\author[Daniela Tonon]{Daniela Tonon}
\subjclass{49N80, 35R15, 49022}
\address[1,2]{\emph{E-mail addresses}: ceccheri@math.unipd.it, tonon@math.unipd.it \newline \emph{Affiliation}: Dipartimento di Matematica ``Tullio Levi Civita'', Università di Padova, 
Via Trieste 63, 35121, Italy}
\begin{document}
\begin{abstract}
The aim of this article is twofold. First, we develop a unified framework for viscosity solutions to both first-order Hamilton-Jacobi equations and semilinear Hamilton–Jacobi equations driven by the idiosyncratic operator, defined on the Wasserstein Space. Second, we establish a vanishing-viscosity limit—extending beyond the classical control-theoretic setting—for solutions of semilinear Hamilton–Jacobi equations, proving their convergence to the corresponding first-order solution as the idiosyncratic noise vanishes. Our approach provides an optimal convergence rate.

We also present some results of independent interest. These include existence theorems for the first-order equation, obtained through an appropriate Hopf–Lax representation, and a useful description of the action of the idiosyncratic operator on geodesically convex functions.
\end{abstract}
\maketitle

\tableofcontents

\section{Introduction}

The starting objective of this work is to treat in a unified approach the following two Hamilton-Jacobi equations (HJE)s defined on the Wasserstein space $\mathcal{P}_2(\R^d)$: the first order HJE
\begin{equation}\label{HJBDet}
    -\del_t U(t,\mu) + \int_{\Rd} H(x, \partial_\mu U(t,\mu,x), \mu)d\mu(x) = 0 \quad \text{in } [0,T] \times \mcl P_2(\R^d),
\end{equation}
and the  semilinear HJE with idiosyncratic noise
\begin{equation}\label{HJBId}
-\partial_t U(t,\mu)
\;+\;
\int_{\mathbb{R}^d} 
H\bigl(x, \partial_\mu U(t,\mu,x), \mu\bigr)\, d\mu(x)
\;-\;
\int_{\mathbb{R}^d} 
\mathrm{tr}\!\left[ a(\mu,x)\, \nabla \partial_\mu U(t,\mu,x) \right] d\mu(x)
= 0,
\end{equation}
in $[0,T] \times \mathcal{P}_2(\mathbb{R}^d)$, where, $T>0$, $H: \R^d \times \R^d\times \mathcal{P}_2(\R^d)\to \R$ is the Hamiltonian, and $a: \mathcal{P}_2(\R^d)\times \R^d \to M_{d\times d}(\R)$ is the diffusion matrix that will be assumed to be positive definite. Usually we will consider the final time condition $U(T,\cdot)=\mathcal{G}(\cdot)$ where $\mathcal{G}:\mathcal{P}_2(\R^d)\to \R$ is the terminal datum. Here, $\partial_\mu U(t,\mu,x)$ stands for the Wasserstein gradient of $U$ at the point $(t,\mu,x)$.

In recent years, a viscosity-solution theory has been developed to address HJEs in the space of probability measures, ensuring well-posedness for this class of nonlinear equations. Classically, uniqueness and stability follow from the comparison principle, while the existence theory has been mainly established in the convex setting, identifying the unique viscosity solution with the value function of an optimal control problem on the space of probability measures. This framework yields the rigorous connection between HJEs on $\mathcal{P}_2(\R^d)$ and Mean Field Control problems.

In this paper, we develop an approach that enables us to treat these two equations using a unified definition of a viscosity solution. In particular, we establish an asymptotic relation between the viscosity solutions of \eqref{HJBDet} and \eqref{HJBId} through a vanishing-viscosity limit. To the best of our knowledge, this is the first work to provide such a perturbative link without imposing regularity assumptions on the functions $U^{\varepsilon}$, the viscosity solutions to \eqref{HJBId} when $a(\mu,x)=\varepsilon\text{Id}_{d\times d}$, apart from those inherent to the control setting. In \cite{tangpi2022probabilisticapproachvanishingviscosity}, a vanishing viscosity result is obtained also in the presence of a common noise operator, in the convex case; however, that analysis relies on additional regularity assumptions on the solutions.

\subsection{Overview on the existing literature}
The study of HJEs on Wasserstein spaces has gained a lot of interest, in these last years, thanks to their connections with Mean Field Control, Mean Field Games, and the associated Master Equation. 
Mean Field Games theory has been introduced by Lasry and P.L. Lions \cite{LLMeanFG}, and independently by Huang, Caines and Malhamé \cite{MeanFieldCaines}. It has been subsequently developed in Lions' courses at  Collège de France \cite{College} and it has attracted the interest of a large group of researchers. The well-posedness theory for solutions of HJEs in the space of probability measures is one of the objectives of this theory due to its connection with mean-field particle approximation \cite{cardaliaguet2022regularityvaluefunctionquantitative, cecchin2025quantitativeconvergencemeanfield}, the control of Mckean-Vlasov equation \cite{CarmonaDelarue, CarmonaDelarueLachapelle2013}, and large deviation theory \cite{FengKurtz2006}.

In the study of HJEs in $\mathcal{P}_2(\R^d)$,  various notions of viscosity solutions have been proposed in the literature (possibly in presence of a common noise) with the shared aim of extending the classical finite-dimensional framework to this more complex setting. However, this new context presents significant challenges. The geometry of $\mathcal{P}_2(\R^d)$ endowed with the Wasserstein distance $W_2$, originally exploited in the seminal work by Otto in \cite{Otto}, and later developed in the monograph by Ambrosio et al. \cite{AGS}, exhibits a singular and non linear structure. As a result, performing differential calculus in $\mathcal{P}_2(\R^d)$ involves the non-trivial task of developing a suitable notion of differentiation in a non-smooth realm. In \cite{AGS}, a subdifferential calculus has been introduced, allowing to give a finer description of the Gradient Flows in the space of probability measures. In particular, superdifferentiability of the squared Wasserstein distance is established. We remark that the lack of (local) differentiability of the distance is a common issue in the theory of HJEs in infinite dimensional spaces. The infinite dimensional theory has been firstly studied by Crandall and P.L. Lions in the series of papers \cite{CrandallLions1984,CrandallLions1986a,CrandallLions1986b,CrandallLions1990,CrandallLions1991}, where, to treat HJEs in Banach spaces, it is assumed the structural hypothesis of the existence of a smooth metric induced by the assigned norm.

To overcome these difficulties in the Wasserstein setting, several strategies have been explored.
\begin{itemize}

\item \emph{Penalization via entropy terms}. 
The singular geometry of the space is reflected by the non-differentiability of the squared Wasserstein distance $W_2$ at ``non-regular" measures (see \cite[Section ~2]{gigc}).
As described by Daudin and Seeger in \cite{DS}, by Bertucci and P.L. Lions in \cite{BertucciApprox}, and in the series of papers of the second author et al. \cite{CKTExi},\cite{CKTComp}, wellposedness for semilinear HJEs in the space of probability measures can be obtained by adding an entropic penalization that localizes the equation at points of differentiability of the squared  distance. The  semilinearity assumption  is fundamental since it helps to absorb the variational contribution of the entropy penalization in the HJE. These approaches are reminiscent of those introduced by Tataru \cite{Tataru1992,Tataru1994} to treat singular HJBs in Banach spaces, and the one developed by Feng and Katsoulakis \cite{FengKatsoulakis2009} to handle gradient flows in infinite dimensions.

Extending the theory to general, even degenerate and not variational, parabolic equation is an interesting problem out of the scope of this work. 

\item \emph{Regularization via weaker metrics}. 
The lack of regularity of the distance can be addressed by replacing it with a more regular alternative, following a program analogous to that developed in Banach spaces. One possibility is to modify the geometric structure of the space by introducing weaker yet sufficiently smooth metrics, as discussed by Soner and Yan in \cite{STorus}. A similar idea appears in Cosso et al. \cite{cosso2022masterbellmanequationwasserstein}, where a gauge function is employed and common noise is also handled. Along the same lines, Bayraktar et al.  \cite{bayraktar2023comparisonviscositysolutionsclass} introduce a clever change of variables that allows one to treat common noise as a finite-dimensional operator, to which an Ishii–Lions lemma (see \cite{crandall1992usersguideviscositysolutions}) can be applied. However, this approach has a drawback: the conditions required to guarantee the comparison principle become difficult to satisfy. Indeed, the weaker the metric, the more regularity is demanded of the data. This trade-off is absent in the penalization approach, where the assumptions on the Hamiltonian closely match those in the finite-dimensional setting.

\item \emph{Lifting to a Hilbert space}. 
In the courses at the Collège de France \cite{College}, P.L. Lions proposed to lift the HJE to a more familiar and smoother space, such as a Hilbert space, where an extensive viscosity solution theory has been developed in the 80's and 90's. Under this viewpoint, the Wasserstein space is realized as a (metric) submersion, and can be regarded as the space of laws of squared-integrable random variables.  This lift creates a bridge between the intrinsic calculus developed in \cite{AGS} and extrinsic calculus via the so called Lions' derivative. See also Carmona and Delarue \cite[Chapter ~5]{CarmonaDelarue}, Bertucci \cite{BSOT}, and Gangbo and Tudorascu \cite{GANGBO2019119}, where the use of Lions' lift to treat HJE over $\mathcal{P}_2(\R^d)$ has been deeply treated. For HJEs without idiosyncratic noise terms, this lift provides a natural strategy to give a well-posed notion of viscosity solution in the space of probability measures \cite{BSOT}. However, as already described in \cite[Rmk~2.20]{DS}, this approach does not adapt well to treat idiosyncratic noise, due to the singularity of the lifted idiosyncratic operator.

\item \emph{Viewing the Wasserstein space as a Positively Curved (PC)  space.} Some authors have instead chosen to develop a more general and abstract theory of HJEs in merely (lenght) metric spaces, see \cite{AMBFeng,GangboSwiech2015_metricViscosity}. Although this approach is elegant, it restricts the class of admissible Hamiltonians to the ones depending only on the metric slope, therefore not sensible to directional variations of the unknown function.  A further step was to observe that $(\mathcal{P}_2(\R^d),W_2)$ is a PC space, see \cite[Appendix]{AGS}. This geometric property allows one to define a notion of tangent bundle endowed with a weak scalar product. A test-function approach can then be employed to obtain well-posedness. See also\cite{JerhaouiAussedatZidani2024} for an application of this construction. As observed in \cite{DS}, the link between test functions and the subdifferential approach is not yet understood. For this reason, in the idiosyncratic case we adopt a more classical notion of solution, leaving the investigation of the relationship between the two frameworks to future work.
\item \emph{Superdifferentiability of the Wasserstein distance}. In \cite{BSOT}, the author realizes that a superdifferentiability of the Wasserstein distance is sufficient to establish a comparison principle for \eqref{HJBDet}. In particular, he proposes a notion that ensures both the superdifferentiability of the squared Wasserstein distance and existence results based on the Hilbertian lift.
\end{itemize}

\subsection{Our main results and contributions.}

We describe here our main contributions, referring to Section \ref{s:Set Up} for the notations used.
\begin{itemize}
\item[(1)] \textbf{Unifying notion of superdifferential.}

In this work, we provide a careful analysis of the weak Riemannian structure in the Wasserstein space, in order to select a superdifferential $\partial^{+}$ with the following properties:
\begin{itemize}
    \item[(i)] The Wasserstein distance squared is superdifferentiable, following the approach in \cite{BSOT}.
    \item [(ii)] $\partial^{+}$ is weaker than the definition of superdifferential given in \cite{BSOT}.
    \item[(iii)] $\partial^{+}$ coincides with the superdifferential adopted in \cite[Definition ~2.5]{DS} when restricted to measures that are absolutely continuous w.r.t. the Lesbesgue measure.
\end{itemize}
The appropriate notion of superdifferential $\partial^{+}$ is
$$
\partial^{+}=\partial_{S}^{+}\cap \text{Tan}^{K}_{\mu}\mathcal{P}_2(\R^d),
$$
where $\partial_S$ is the the strong differential, see Definition \ref{d: SDiff1}, and $\text{Tan}^{K}_{\mu}\mathcal{P}_2(\R^d)$ is the \emph{geometric} Tangent space at a  measure $\mu\in \mathcal{P}_2(\R^d)$, see Definition \ref{d: TanK}
Indeed, $\partial^{+}$ is a subset of the superdifferential defined in \cite{BSOT}. On the other hand, in $\mathcal{P}_{2,\text{ac}}(\R^d)$, the set absolutely continuous measures w.r.t. the Lesbesgue measure, the baricentric projection $\text{Pr}$, see \eqref{Pr}, yields $\forall \mu \in \mathcal{P}_{2,\text{ac}}(\R^d)$
$$
    (\Tan^{K}_{\mu}\mathcal P_2(\R^d),W_\mu)\simeq_{\text{Pr}}(\Tan^{M}_{\mu}\mathcal P_2(\R^d),\|\cdot\|_{L^2_\mu(\R^d)})= (\overline{\left\{ \nabla \phi \colon \phi \in C^{\infty}_c(\R^d) \right\}}^{L^2_\mu(\R^d)}, \|\cdot\|_{L^2_{\mu}(\R^d)}).
$$
This identification allows to relate our notion of superdifferential with the one introduced in \cite{DS}, namely:
\begin{equation*}
    \partial^{+}_S\phi(\mu)\cap \Tan^K_{\mu}\mathcal{P}_2(\R^d)\simeq_\mathrm{Pr}\partial^{+}_{M}\phi(\mu)\cap \Tan^{M}_{\mu}\mathcal{P}_2(\R^d), \quad \forall \mu \in \mathcal{P}_{2,\text{ac}}(\R^d).
\end{equation*}
We  also refer to Lemma \ref{Strong=Opt} for more details. Although this result is not new in the literature, see \cite[Theorem ~3.14]{GANGBO2019119}, we provide  here an alternative proof using the weak Riemannian formalism introduced in \cite{GigliPhd} and adopted in this work.

Moreover, in Proposition \ref{prop:super}  we show that Wasserstein distance squared is superdifferentiable w.r.t to $\partial^+$. This property ensures the uniqueness results for the equations \eqref{HJBDet} and \eqref{HJBId}, relying on the previous contributions \cite{BSOT} and \cite{DS}.

This unified approach allow us to compare, in the last part of this paper, the solutions of \eqref{HJBDet} and \eqref{HJBId} by means of a doubling variables procedure.

\item[(2)]\textbf{Hopf Lax Formula.}

With the aim of  providing a self-contained existence result, that justifies our notion of viscosity solution, we consider, see Subsection \ref{existence},  the function
 $$U(t,\mu)=\inf_{\gamma \in \mathcal{P}_{2,\mu}(\R^{2d})}\left\{ \mathcal{G}(\exp_{\mu}(\gamma)) + (T-t)\mathcal{L}\left(\frac{1}{T-t}\cdot\gamma\right) \right\},
 $$
where $\mathcal{L}: \mathcal{P}_2(\R^{2d})\to \R$ is the relaxed Lagrangian defined by
$$\mathcal{L}(\gamma)= \int_{\R^{2d}}L(v)d\gamma(x,v),$$
and $L:\R^d \to \R$ is the Lagrangian associated to the Hamiltonian $H$. Moreover, $\exp_{\mu}: \mathcal{P}_{2,\mu}(\R^{2d})\to \mathcal P_2(\R^d)$ denotes the exponential map in the Wasserstein space, defined on  $\mathcal{P}_{2,\mu}(\R^{2d}):=\left\{\gamma \in \mathcal{P}_2{(\R^{2d})}, (\pi_1)_{\#}\gamma=\mu \right\}$.

By showing  that the above formula  provides a representation of the viscosity solution of \eqref{HJBDet} with convex Hamiltonian $H(x,p,\mu)=H(p)$ and a not necessarily  differentiable Lagrangian, we generalize the existence result established in \cite{BSOT}.

A related Hopf-Lax representation formula was previously introduced in \cite[Section ~5]{GANGBO2019119}, where first-order time dependent HJEs are considered in the case the Hamiltonian $H$ is convex and satisfies a structural decreasing property, see equation (5.7) there. This condition ensures compatibility between their lifted Hamiltonian and the Wasserstein geometry. In that setting, the Hopf-Lax representation, see for instance \cite[Section 5.4]{GANGBO2019119}, is established in the quadratic case, relying crucially on the aforementioned decreasing property satisfied by the Hamiltonian. By contrast, our result does not require any such structural decreasing assumption of $H$, see also Remark \ref{decreasing}.

\item[(3)]\textbf{Geodesic Convexity and Idiosyncratic Operator.}
In Section \ref{s: section3}, we inspection the action of the idiosyncratic operator on geodesically convex functions. To  this end, we prove a connection between geodesic convexity and the convexity of the flat derivative w.r.t. the $\R^d$ variable.
Indeed, we prove the following convexity result on mixtures of measures, which, to the best of our knowledge, is new in the literature.\\

\noindent \textbf{Lemma \ref{geodesic convexity}.}\emph{ Suppose $d\geq2$.
Let $\mathcal{F}: \mathcal{P}_2(\R^d)\to \R$ be a continuous $\Lambda$-geodesically convex function. Let $\mu\in \mathcal P_2(\R^d)$, then $\forall h \in [0,1]$ the function 
    $$\mathcal P_2(\R^d) \ni \nu \mapsto \mathcal{F}((1-h)\mu+ h\nu)$$
is $h\Lambda$-geodesically convex.\\
}

The main difficulty in proving this result is that, even if $t \mapsto \nu_t$ is a $W_2$-constant speed geodesic, the curve $(1-h)\mu+h\nu_t$ is not, in general. Nevertheless, this curve is still induced by a transport coupling, so that using \cite[Theorem 9.1]{CavSavSod23}, where the equivalence between geodesic convexity and total convexity is shown, we obtain a convex like inequality along this curve, which yields the desired conclusion.
We remark that  the dimensional restriction $d\geq 2$ comes from \cite[Theorem 9.1]{CavSavSod23}. Relying on this result, we do not know if our result can be extended to $d=1$. 

As a byproduct of the previous lemma, we obtain the following result.\\

\noindent \textbf{Corollary \ref{geodesic convexity coro}.}
\emph{
Suppose $d\geq 2$.
Let $\mathcal{F}: \mathcal{P}_2(\R^d)\to \R$ be a continuous $\Lambda$-geodesically convex function. If $\mathcal{F}$ is flat differentiable at $\mu\in \mathcal P_2(\R^d)$ then $\R^d\ni x \mapsto \mathcal{D}_{\mu}\mathcal{F}(\mu,x)$  is $\Lambda$-convex.  \\
}

In the above, we denoted by $\mathcal{D}_{\mu}$ the flat derivative operator at $\mu$. The proof of the  corollary relies on the well known representation formula
\begin{equation*}
    \mathcal{D}_{\mu}\mathcal{F}(\mu,x):=\lim_{h \to 0}\frac{\mathcal{F}((1-h)\mu+ h \delta_x)-\mathcal{F}(\mu)}{h} \quad (\mu,x)\in \mathcal{P}_2(\R^d)\times \R^d.
\end{equation*}

The above results are crucial to prove an estimate for the  weak action of the idiosyncratic operator introduced in \cite{DS}.\\

\noindent \textbf{Proposition \ref{generalconcavityestimate}.}
\emph{Suppose $d \geq 2$. Let $\mu \in \mathcal{P}_2(\R^d)$ s.t. $\mathcal{I}(\mu)<\infty$, $\mu^{-1}\in L^{\infty}_{\text{loc}}(\R^d)$ and $\mu \in W^{1,\infty}_{\text{loc}}(\R^d)$. Let $\mathcal{F}: \R^d \to \R$ be a $\Lambda$-geodesically concave functional that is flat differentiable at $\mu\in\mathcal P_2(\R^d)$. Then 
\begin{equation*}\label{weakaction}
    -\int_{\R^d}\big<\nabla \log \mu,\nabla \mathcal{D}_{\mu}\mathcal{F}(\mu,x)\big>d\mu\leq d\Lambda.
\end{equation*}
}

\item[(4)]\textbf{Vanishing viscosity limit.}

The final result constitutes the main motivation of this article. Namely, showing the convergence of  $U^{\varepsilon}$, the viscosity solution of \eqref{HJBId},  with $a(\mu,x)=\varepsilon\text{Id}_{d\times d}$, towards $U$ viscosity solution of \eqref{HJBDet}, providing an optimal rate of convergence.\\

\noindent\textbf{Theorem \ref{vanishing}.}\emph{
Suppose Assumptions \ref{a: DataHyp} hold. Let $U^{\varepsilon}$ and $U$ be two bounded viscosity solutions of  \eqref{HJBId} and \eqref{HJBDet}, respectively, s.t. that $ U^{\varepsilon}(T,\mu)=U(T,\mu)=\mathcal{G}(\mu)$.
Additionally, suppose $U^{\varepsilon}$ to be uniformly in time $W_1$-Lipschitz continuous for each $\varepsilon$, and $U$ to be uniformly in time $W_2$-Lipschitz continuous. Moreover, suppose $U$ to be jointly continuous with respect to the product topology of
$[0,T]\times\mathcal P_2(\mathbb R^d)$,
where $\mathcal P_2(\mathbb R^d)$ is endowed with the weak convergence as in Definition \ref{weakconvergence}.
Then, there exists a constant $C\geq 0$ depending only on the data s.t. 
\begin{equation*}
\sup_{(t,\mu)\in[0,T]\times \mathcal{P}_2(\R^d)} \vert U^{\varepsilon}(t,\mu)- U(t,\mu) \vert \leq C\sqrt{\varepsilon}.
\end{equation*}
}
We begin by briefly discussing the assumptions. The boundedness requirement on the solutions can be relaxed by introducing a suitable growth control; however, we decided not to focus on this technical issue. The main restriction of our result is the further continuity assumption w.r.t. the weak convergence of measures that we briefly discuss here: the space $(\mathcal{P}_2(\R^d),W_2)$ is not locally compact, see \cite[Remark ~7.1.9]{AGS}, and this usually constitutes an issue in any optimization procedure, as in the doubling variables argument. However, $\mathcal{P}_2(\R^d)$ is locally compact w.r.t. the weak convergence, recalled in Subsection \ref{sectionweakconvergence}. We therefore impose further regularity w.r.t. this convergence in order to properly optimize. See also the discussion in \cite[Section ~4.6]{bertucci2025doublingvariablestechniqueorder} where tentative approaches to solve this important problem are also proposed.

We now heuristically outline the main arguments in the lighter time-independent case, since the core argument is completely analogous to that of the evolutive case.
Note that, to derive the vanishing-viscosity limit, we do not rely on possible explicit representations of the solution, as is often done in the control-theoretic setting. Instead, we adopt a purely PDE-based approach, following methods already established in the finite-dimensional case, see the monograph \cite[Section~VI]{Bardi1997OptimalCA} or \cite[Theorem 5.1]{CrandallLionsApproximation}. The main motivation for this choice is that this approach may also be useful in more general settings, such as the broader metric framework considered in \cite{CKTComp}.

Then the strategy closely follows the finite dimensional case. We perform a doubling variables argument penalizing the difference between the two solutions:
\begin{equation*}
    (\mu,\nu) \mapsto U^{\varepsilon}(\mu)-U(\nu) -\frac{1}{2\alpha}W_2^2(\mu,\nu) -\delta \mathcal{E}(\mu) + \mbox{localizing term},
\end{equation*}
where $\mathcal{E}: \mathcal{P}_2(\R^d) \to \R \cup\{+ \infty\}$ is the entropy functional relative to the $d$-dimensional Lesbesgue measure $\lambda_d$.
Thanks to this penalization, we obtain a maximum $(\bar \mu,\bar \nu)\in \mathcal{P}_2(\R^d)\times \mathcal{P}_2(\R^d)$ at which we can exploit the joint \emph{superdifferentiability }of $W^2_2(\cdot , \cdot)$ at $(\bar \mu, \bar \nu)$ (here we use the compactness ensured by the continuity w.r.t. the weak convergence of the two solutions, indeed  the $W_1$-Lipschitz continuity of $U^\varepsilon$ implies its continuity w.r.t. weak convergence).  As usual, this procedure gives elements in the subdifferential of $U^{\varepsilon}-\delta \mathcal{E}$ at $\bar \mu$ and in the superdifferential of $U$ at $\bar \nu$, which can be used to apply the viscosity solution property to $U^\varepsilon$ and $U$. Let us also stress that the maximization gives $\mathcal{I}(\bar \mu)< \infty$. We then apply the  viscosity solution property which yields
\begin{equation*}
    U^{\varepsilon}(\bar \mu)-U(\bar \nu)\leq C\alpha+ K(\varepsilon)\delta -\underbrace{\varepsilon\int_{\R^d}\big<\nabla \log \bar \mu(x), \frac{x-T_{\bar \mu}^{\bar \nu}(x)}{\alpha}\big>d\bar \mu}_{I_{\varepsilon}}(x),
\end{equation*}
where the constant $C$ depends only on the local Lipschitz regularity of $H$. The term $C\alpha$ is obtained similarly  in the proof of the comparison principle presented in \cite{DS}. The main issue in our case,  is the dependence of the constant $K(\varepsilon)$ on the diffusion, which quantitatively compensates the fact that we considered an element of the superdifferential of $U^{\varepsilon} -\delta \mathcal{E}$, and not merely of $U^{\varepsilon}$. Indeed, this constant is expected in general to blowup, see the formal computation in \cite[Section ~2.3, Eq. (2.23)]{DS}. Therefore, we need a bound on $I_{\varepsilon}$ that does not depend on $\delta$. Indeed, applying a naive Cauchy Schwarz inequality, one can prove that  
\begin{align*}
    I_{\varepsilon}\leq \Lip(U^{\varepsilon})\Big(\frac{\varepsilon}{\delta}+\varepsilon\alpha\Big)\leq C\Big(\frac{\varepsilon}{\delta}+\varepsilon\alpha\Big).
\end{align*}
Optimizing w.r.t $\delta$ yields 
$$
 U^{\varepsilon}(\bar \mu)-U(\bar \nu)\leq C\alpha+ \sqrt{K(\varepsilon)\varepsilon},
$$
whose RHS  does not, a priori, vanish as $\varepsilon \to 0.$ We therefore need a sharper bound on $I_\varepsilon$ that does not depend on $\delta$, namely
\begin{equation*}
   I_\varepsilon= -\varepsilon \int_{\R^d}\big<\nabla \log \bar \mu(x), \frac {x- T^{\bar\nu}_{\bar\mu}(x)}{\alpha}\big>d\bar\mu(x)\leq d.
\end{equation*}
as proven in Lemma \ref{BoundTermineViscoso}, where the regularity assumptions therein are satisfied by the optimizer $\bar \mu\in \mathcal{P}_{2,\text{ac}}(\R^d)$.
This estimate can be regarded as the Wasserstein analogue of the finite-dimensional identity $\varepsilon\Delta_x \frac{|y-x|^2}{2\alpha}=\frac{d\varepsilon}{\alpha}$ which plays a crucial role in the finite dimensional proof in \cite{Bardi1997OptimalCA}. The inequality can be interpreted as a manifestation of the positive
curvature of the space $(\mathcal{P}_2(\mathbb{R}^d),W_2)$, as observed in
Remark~\eqref{rmk: Rimannianbound}.

The conclusion then follows as in the finite dimensional argument and, being the proof symmetric, we also obtain the converse inequality, completing the convergence result.

We then conclude the analysis showing that the one sided rate of convergence (from above) can be improved from $\sqrt{\varepsilon}$ to $\varepsilon$, provided $U^{\varepsilon}$ is a classical solution of \eqref{HJBId} such that $\Delta_x\mathcal{D}_{\mu}U^{\varepsilon}(t,\mu,x)\leq C$ uniformly. In particular, thanks to the analysis on geodesically concave functions, carried out in Section \ref{s: section3}, this rate is achieved by $C^2$ geodesically $\Lambda$-concave functions, at least for $d \geq 2$.

Finally, the above rates are optimal since they are optimal\footnote{w.r.t. the assumptions on the Hamiltonian. See \cite{cirant2025convergenceratesvanishingviscosity,chaintron2025optimalrateconvergencevanishing} for the optimal rate in the convex regime.} in the finite dimensional case that embeds in this mean field framework.

\end{itemize}

\subsection{Structure of the paper.}

In Section \ref{s:Set Up},  we fix the notation, we provide the main assumptions we are going to work with, and recall some important results from the literature that will be useful for our work. Moreover, we investigate some links between various notions of differentials. As a consequence, we will determine one meaningful differential to treat  both the equations \eqref{HJBId} and \eqref{HJBId}, in the viscosity sense.

In Section \ref{s: Hopf}, we prove in an intrinsic way a Hopf-Lax representation of the solution of \eqref{HJBDet}. \begin{comment}With the help of a Hopf-Cole transformation we also derive a closed formula for the viscosity solution of \ref{HJBDet} for the quadratic Hamiltonian.
\end{comment}
In Section \ref{s: section3}, we derive estimates and lemmata that will be useful in Section \ref{s: vvl}, where we finally establish the vanishing viscosity result.

\section{The Wasserstein Space and the Related Subdifferential Calculus} \label{s:Set Up}

\subsection{The Wasserstein Space}
Our ambient space will be the space of probability measures on the $d$-dimensional Euclidean space $\R^d$ denoted as $\mathcal{P}(\R^d)$. The measure $\lambda_d$ will stand for the Lesbesgue measure in $\R^d$, and in the integration we will often use $dx$ rather than $\lambda_d$. 

Given $\mu \in \mathcal{P}(\R^d)$,  we will denote by 
\begin{align*}
\mathcal{M}_{2}(\mu) &:= \int_{\R^d}\vert x\vert^{2}d\mu, \\
    \mathcal{P}_{2}(\R^d)&:=\{ \mu \in \mathcal{P}(\R^d): \mathcal{M}_{2}(\mu)< \infty \},\\
    \mathcal{P}_{2,ac}(\R^d)&:= \{ \mu \in \mathcal{P}_2(\R^d): \mu << \lambda_d\}
\end{align*}
the second moment of the measure $\mu$ and the $L^2-$Wasserstein space, and the space of absolutely continuous measures with respect to $\lambda_d$, respectively.

The Wasserstein space $(\mathcal{P}_{2}(\R^d), W_2)$ is a complete and separable geodesic metric space when equipped with the Wasserstein distance $W_2: \mathcal{P}_2(\R^d)\times \mathcal{P}_2(\R^d)\to [0,\infty)$, defined for $\mu,\nu \in \mathcal{P}_2(\R^d)$ as 
\begin{equation}\label{def:wass}
    W_{2}(\mu, \nu):= \min_{\gamma \in \Gamma(\mu,\nu)} W^{\gamma}_2(\mu,\nu),
\end{equation}
where $$\Gamma(\mu, \nu): = \left\{ \gamma\in \mathcal{P}(\mathbb{R}^d \times \mathbb{R}^d) : {(\pi_1)}_{\#} \gamma = \mu, \, {(\pi_2)}_{\#} \gamma = \nu \right\}$$ is the set of couplings with marginals $\mu$ and $\nu$, $\pi_i:\R^d\times\R^d\to\R^d$ is the standard projection operator onto the $i$-th coordinate, $i\in \{1,2\}$, and, for a coupling {$\gamma \in \Gamma (\mu,\nu),$} 
\begin{equation}\label{eq: generalcost}
    W^{\gamma}_2(\mu,\nu):= \left(\int_{\R^{2d}} \vert x-y\vert ^{2} d\gamma(x,y)\right)^{\frac{1}{2}}
\end{equation}
is the transport cost from $\mu$ to $\nu$ associated to $\gamma$. For further details, see \cite[Chapter 4, Chapter 6]{OTVillani}.

We denote by $\Gamma_{0}(\mu, \nu)$ the set of optimal couplings between $\mu$ and $\nu$ for the distance $W_2$, i.e., any coupling that realizes the minimum cost in \eqref{def:wass}. 
In general, we will say that a plan $\gamma \in \mathcal{P}(\R^{2d})$ is optimal if $\gamma \in \Gamma_{0}((\pi_1)_\# \gamma, (\pi_2)_\# \gamma)$, i.e. $\gamma$ is optimal w.r.t. its marginals.

Moreover, for ${\gamma} \in \mathcal{P}(\R^{2d}), \nu \in \mathcal{P}(\R^d)$, the sets 
$$\Gamma({\gamma},\nu):=\{\theta \in \mathcal{P}(\R^{3d}): (\pi_1,\pi_2)_{\#}\theta= \gamma, \ (\pi_1,\pi_3)_{\#}\theta \in \Gamma({(\pi_1)}_{\#}\gamma, \nu)\},$$  
$$\Gamma_0({\gamma},\nu):=\{\theta \in \mathcal{P}(\R^{3d}): (\pi_1,\pi_2)_{\#}\theta= \gamma, \ (\pi_1,\pi_3)_{\#}\theta \in \Gamma_0({(\pi_1)}_{\#}\gamma, \nu)\}$$
will denote the set of (resp. optimal) couplings between the plan $\gamma$ and the measure $\nu$. Here $\pi_i:\R^{3d}\ \to\R^d$ is the standard projection operator onto the $i$-th coordinate, $i\in \{1,2,3\}$, coherently with  the notation used for the projection from $\R^{2d}$ to $\R^d$.

To any $\gamma \in \mathcal P(\R^{2d})$ we associate its disintegration w.r.t. the first marginal $\mu:=(\pi_1)_\# \gamma$:  i.e. the $\mu$-a.e. uniquely determined measure $\gamma_{x}(dy)\in \mathcal P(\R^d)$ s.t. $\gamma=\gamma_{x}(dy)\mu(dx)$. 
The disintegration easily generalizes to the second marginal and to every marginal of measures $\theta\in \mathcal P(\R^{3d})$. See \cite[ Theorem 5.3.1]{AGS}.

Let $\mathcal{F} \colon  \mathcal{P}_{2}(\R^d) \to \R$ be a function. We call 
\begin{equation*}
    \Lip(\mathcal{F}; W_{2}):=\inf_{\mu\not=\nu} \frac{ \vert \mathcal{F}(\mu) -\mathcal{F}(\nu) \vert}{W_{2}(\mu, \nu)}
\end{equation*}
the Lipschitz constant of $\mathcal{F}$ w.r.t. the $W_2$ distance.

 \begin{comment}

\end{comment}
For future use, we mention that for any $\mu,\nu \in \mathcal P_2(\R^d)$, as a simple consequence of the Cauchy Schwarz inequality we have the following inequalities 
\begin{equation*} 
\vert \mathcal{M}_2( \mu)- \mathcal{M}_2( \nu) \vert \leq  W_2(\mu,\nu) \left( \mathcal{M}_2( \mu)^{\frac{1}{2}}+\mathcal{M}_2( \nu)^{\frac{1}{2}}\right).
\end{equation*}

In particular, we have the following locally Lipschitz bound for second moments
\begin{equation} \label{rk: momentsdist}
    \vert \mathcal{M}_2(\mu)- \mathcal{M}_2( \nu) \vert \leq  \frac{1}{2} W_2(\mu,\nu)\left(2+ \mathcal{M}_2( \mu)+\mathcal{M}_2(\nu)\right).
\end{equation}
Indeed, given $ \sigma \in \Gamma_0(\mu, \nu)$, by standard algebra, we have

\begin{align*}
 \mathcal{M}_2( \mu)- \mathcal{M}_2( \nu)&=\int_{\R^d\times\R^d}(x-y)\cdot(x+y)d \sigma(x,y)\\
 &\leq \left(\int_{\R^d\times\R^d}\vert x-y\vert ^2 d  \sigma(x,y)\left)^{\frac{1}{2}}\right(\int_{\R^d\times\R^d}\vert x+y\vert ^2 d \sigma(x,y)\right)^{\frac{1}{2}}\\
 &\leq W
 _2(\mu,\nu)\left((\int_{\R^d}\vert x\vert ^2 d\mu(x))^{\frac{1}{2}}+(\int_{\R^d}\vert y\vert ^2 d\nu(y))^{\frac{1}{2}}\right).
\end{align*}

Note that the previous notions can be easily generalized to $\mathcal M_p (\cdot), \mathcal P_p(\R^d), W_p(\cdot,\cdot), \Lip(\cdot; W_{p})$ for $p\geq 1$.

\subsection{Tangent spaces at a measure}

We survey here the definitions of various notions of tangent space at a measure, as described in \cite[Appendix]{AGS}.
Thanks to this description, we can unify the notions of viscosity solution in the first order case and in the semilinear case with idiosyncratic noise.

\begin{defn}\label{d: TanM}
Let $\mu \in \mathcal{P}_2(\R^d)$. The \emph{(Monge) tangent space at $\mu$} is defined as the set
\begin{equation*}
    \Tan^{M}_{\mu}\sP_2(\R^d) := \overline{\left\{ \varepsilon(r - \mbox{Id}) \colon r:\R^d\to\R^d \mbox{ Borel measurable}, (\mbox{Id}\times r)_{\#}\mu \text{ is optimal, for }\varepsilon > 0 \right\}}^{L^2_\mu(\R^d)},
\end{equation*}
where $L^2_\mu(\R^d)$ is the $L^2$ space of $\mu$-measurable $\R^d$-valued maps.
\end{defn}

Given $\mu\in \mathcal P_2(\R^d)$, let us call 
$$
    \mathcal{P}_{2,\mu}(\R^{2d}):=\left\{\gamma \in \mathcal{P}_2{(\R^{2d})}, (\pi_1)_{\#}\gamma=\mu \right\},
$$
the set of plans in $\mathcal{P}_2{(\R^{2d})}$ having $\mu$ as first marginal, and for $\gamma,\xi \in \mathcal{P}_{2,\mu}({\R^{2d}})$
$$
    \Gamma_{\mu}(\gamma,\xi):=\{\theta \in \mathcal P (\R^{3d}), (\pi_{1,2})_{\#}\theta = \gamma, (\pi_{1,3})_{\#}\theta=\xi\},
$$
 the set of plans in $\mathcal{P}{(\R^{3d})}$, couplings in $\mathcal P_{2,\mu}(\R^{2d})$. Here, $\pi_{1,j}:\R^d\times\R^d\times\R^d\to \R^d\times\R^d$ is the projection on the first and the $j$-th coordinate $\pi_{1,j}:=(\pi_1, \pi_j)$, $j=2,3$. To shorten the notation we will write $\gamma^{\mu}$ instead of $\gamma$, when the latter belongs to $\mathcal{P}_{2,\mu}(\R^{2d})$. 
 
The set $\mathcal{P}_{2,\mu}(\R^{2d})$ is metrized by the distance $W_\mu \colon \mathcal{P}_{2,\mu}(\R^{2d}) \times \mathcal{P}_{2,\mu}(\R^{2d}) \to [0,\infty)$ defined  in \cite[Equation ~(12.4.10)]{AGS},  as
\begin{equation}\label{DistanceTangentBundle}
W^2_{\mu}(\gamma^\mu_{1},\gamma^\mu_{2}):=\min_{\theta \in \Gamma_{\mu}(\gamma_1^\mu,\gamma_2^\mu)} \left\{\int_{\R^{3d}} \vert y-z\vert^2  d\theta(x,y,z) \right\}.
\end{equation}

\begin{defn}\label{d: TanK}
Let $\mu \in \mathcal{P}_2(\R^d)$. The \emph{(Kantorovich) geometric tangent space at $\mu$} is defined as:
\begin{align*}
    \Tan^{K}_{\mu}\sP_2(\R^d) := \overline{
    \left\{ \gamma \in \mathcal{P}_{2,\mu}(\R^{2d}) : (\pi_1,\pi_1 + \varepsilon \pi_2)_{\#}\gamma \text{ is optimal, for some } \varepsilon > 0 \right\} }^{W_\mu(\cdot, \cdot)}
\end{align*}
\end{defn}

In $\mathcal{P}_{2,\mu}(\R^{2d})$, it is naturally defined an \emph{exponential map}:
\begin{align*}\label{d:exp}
    \exp_{\mu}: &\ \mathcal{P}_{2,\mu}(\R^{2d})\to \mathcal{P}_2(\R^d) \nonumber \\
    &\ \gamma\longmapsto (\pi_1+\pi_2)_{\#}\gamma.
\end{align*}

We remark that, as in \cite[Definition ~4.18]{GigliPhd}, we prefer to define the exponential on the whole $\mathcal{P}_{2,\mu}(\R^{2d})$ rather than on its closed subset $\Tan^K_{\mu}\mathcal{P}_2(\R^d)$ for reasons that will become clear later in our discussion about differentials. Moreover, let us note that $\exp_\mu$ is surjective, i.e., more precisely, for all $\nu \in \mathcal{P}_2(\R^d)$ there exists an optimal velocity $\gamma \in \Tan^K_{\mu}\mathcal{P}_2(\R^d)$
s.t. $\exp_\mu(\gamma)=\nu$.

Following \cite[Definition ~4.13]{GigliPhd}, given $\mu\in \mathcal P_2(\R^d)$, we can also introduce a notion of ``scalar product" on  $\mathcal{P}_{2,\mu}(\R^{2d})$ as
\begin{equation}\label{scalar}
    \big<\gamma_1^\mu,
    \gamma_2^\mu\big>_{\mu}:=\max_{\theta \in \Gamma_{\mu}(\gamma_1^\mu,
    \gamma_2^\mu)} \big<\gamma_1^\mu,
    \gamma_2^\mu\big>_{\theta,\mu},
\end{equation}
where  for $\theta \in \Gamma_{\mu}(\gamma_1^\mu,    \gamma_2^\mu) $,  $\big<\gamma_1^\mu,\gamma_2^\mu\big>_{\theta,\mu}:=\int_{\R^{3d}}\big<y,z\big>d\theta(x,y,z) $. 

Note that the extrema of \eqref{scalar} and \eqref{DistanceTangentBundle} coincide because of the identity $|y-z|^2=|y|^2+|z|^2 -2\big<y,z\big>, \quad \forall y,z \in \R^d$. Moreover, whenever either $\gamma_1^\mu$ or $\gamma_2^\mu$ is induced by a map, say $\gamma_2^\mu=(\Id \times p)_{\#}\mu$ with $p \in L^2_{\mu}(\R^d)$, being $\Gamma_{\mu}(\gamma_1^\mu,\gamma_2^\mu)$ a singleton, we have
$$\big<\gamma_1^\mu,\gamma_2^\mu\big>_{\mu}=\int\big<y,p(x)\big>d\gamma_1^\mu(x,y).
$$  
As a consequence one can also introduce a \emph{Wasserstein norm} in $ \mathcal{P}_{2,\mu}(\R^{2d})$,  defined for $\xi \in \mathcal{P}_{2,\mu}(\R^{2d})$ as 
\begin{equation}\label{e: normW}
\|\xi\|_{\mu}:=\sqrt{\big<\xi,\xi\big>_{\mu}}=W_{\mu}(\xi,(\Id \times 0)_{\#}\mu).
\end{equation}
In particular, the norm is continuous w.r.t. the convergence induced by $W_{\mu}$. Moreover, observe that 
\begin{equation*}
    W_2(\mu,\exp_{\mu}(\xi))\leq \|\xi\|_{\mu}, \quad \forall \xi \in \mathcal{P}_{2,\mu}(\R^{2d})
\end{equation*}
where the equality holds if $\xi \in \Gamma_{0}(\mu,\exp_{\mu}(\xi)).$

By \cite[Proposition ~4.21]{GigliPhd}, we have  that, for $\gamma_1,\gamma_2,\xi \in \mathcal{P}_{2,\mu}(\R^{2d})$, 
\begin{equation*}
    |\big<\gamma_1,\xi\big>_\mu- \big< \gamma_2,\xi\big>_\mu|\leq W_{\mu}(\gamma_1,\gamma_2)\|\xi\|_\mu.
\end{equation*}
In other words, the scalar product is  Lipschitz continuous w.r.t. $W_\mu$. 
We also remark, see \cite[Remark ~4.28]{GigliPhd}, that in general 
\begin{equation}\label{e: max> min}
\max_{\theta \in \Gamma_{\mu,\nu}(\gamma,\xi)} \big<\gamma,\xi\big>_{\theta,\mu}\geq \min_{\theta \in \Gamma_{\mu,\nu}(\gamma,\xi)} \big<\gamma,\xi\big>_{\theta,\mu},
\end{equation}
with the inequality possibly strict.
However, the equality holds if either $\gamma$ or $\xi$ is induced by a map. 
Lastly,  a multiplication operation can be defined on $\mathcal{P}_{2,\mu}(\R^{2d})$, see \cite[Proposition ~4.22 $\&$ Proposition ~4.24]{GigliPhd}: given $\gamma \in \mathcal{P}_{2,\mu}(\R^{2d})$
\begin{equation*}
    \R \ni\lambda \mapsto \lambda \cdot \gamma:=(\pi_1,\lambda \pi_2)_{\#}\gamma. 
\end{equation*}
If $\gamma \in \Tan^{K}_{\mu}\mathcal{P}_2(\R^d)$, it is immediate to see that $\lambda \cdot \gamma \in \Tan_{\mu}^K{\mathcal P}_{2}(\R^{d})$, for every $\lambda \geq 0$. This invariance is still true for $\lambda < 0$, see \cite[Proposition ~4.29]{GigliPhd}.  {To lighten the notation we will often write $-\gamma$ for $-1\cdot \gamma$.} 
We also remark that the product $\big<\cdot,\cdot\big>_{\mu}$ enjoys a one sided Cauchy Schwarz inequality: Indeed, for every $\gamma^1,\gamma^2 \in \mathcal{P}_{2,\mu}(\R^{2d})$
\begin{align*}
    \big<\gamma^1,\gamma^2\big>_{\mu}&=\max_{\theta \in \Gamma_{\mu}(\gamma^{1},\gamma^{2})}\int \big<y,z\big>d\theta(x,y,z)\\
    &\leq \max_{\theta \in \Gamma_{\mu}(\gamma^{1},\gamma^{2})}\Big(\int |y|^2d\theta(x,y,z)\Big)^{\frac{1}{2}}\Big(\int |z|^2d\theta(x,y,z)\Big)^{\frac{1}{2}}\\
    &= \Big(\int |y|^2d\gamma^{1}(x,y)\Big)^{\frac{1}{2}}\Big(\int |z|^2d\gamma^{2}(x,z)\Big)^{\frac{1}{2}}\\
    &=\|\gamma^{1}\|_{\mu}\|\gamma^{2}\|_{\mu}.
\end{align*}

Moreover, the equality holds iff there exists $\theta \in \Gamma_{\mu}(\gamma^{1},\gamma^{2})$ optimizer for $\big<\cdot,\cdot\big>_{\mu}$ s.t. $\big<y,z\big>=|y||z|$ holds $\theta$-a.e., i.e. $\gamma^1=\lambda \cdot\gamma^2$, with $\lambda\geq 0$.

As in \cite[Proposition ~4.30]{GigliPhd}, we define a projection operator on $\Tan^{K}_{\mu}\mathcal{P}_2(\R^{2d})$ as:
\begin{equation*}
    \text{P}:\mathcal{P}_{2,\mu}(\R^{2d}) \to \Tan^{K}_{\mu}\mathcal{P}_2(\R^{d}),
\end{equation*}
that at each element $\gamma$ associates the unique plan $\text{P}(\gamma)\in \Tan^K_{\mu}\mathcal{P}_2(\R^d)$ that minimizes the distance between $\gamma$ and the geometric tangent space $\Tan^K_{\mu}\mathcal{P}_2(\R^d)$. In particular, see  \cite[Corollary ~4.34]{GigliPhd}, 
\begin{equation}\label{projectionOperator}
    \big<\text{P}(\gamma),\eta\big>_{\mu}=\big<\gamma,\eta\big>_{\mu} 
    \quad \forall \eta \in \Tan^{K}_{\mu}\mathcal{P}_2(\R^d) \quad \text{and} \quad \|\text{P}(\gamma)\|_{\mu}\leq \|\gamma\|_{\mu}, \quad \forall \gamma \in \mathcal{P}_{2,\mu}(\R^{2d}).
\end{equation}

In order to understand the link between the two different notions of tangent space, let us consider the barycentric projection (see \cite[ Definition ~5.4.2.]{AGS}). For a fixed $\mu\in \mathcal P_2(\R^d)$, the barycentric projection map is defined as 
\begin{align*}
    \text{Pr}_{\mu} \colon \mathcal{P}_{2,\mu}(\R^{2d}) &\to L^2_\mu(\R^d),\\
    \gamma &\mapsto \bar{\gamma},
\end{align*}
where, given $\gamma(dx, dy) = d\gamma_x(y) d\mu(x)$, the disintegration of $\gamma$ with respect to its  first marginal $\mu=(\pi_1)_\# \gamma$,  the image $\bar{\gamma}:\R^d\to \R^d$ is defined as :
\begin{equation*}
   \bar{\gamma}(x): = \int_{\R^d} y \, d\gamma_x(y).
\end{equation*}
Note that Jensen inequality ensures the wellposedness of the above map, indeed $\bar\gamma\in L^2_\mu(\R^d)$.
To be more general, we  define the projection map\footnote{In a probabilistic framework, the barycentric projection of the law of a random vector $(X,Y)$ corresponds to the law of the conditional expectation $\mathbb{E}[Y \vert X]$.} on the whole $\mathcal{P}_2(\R^{2d})$, as 
\begin{equation}\label{Pr}
    \text{Pr}(\gamma):=\text{Pr}_{(\pi_1)_{\#}\gamma}(\gamma), \quad \forall \gamma \in \mathcal{P}_2(\R^{2d}).
\end{equation}

It turns out that, for any $\mu\in \mathcal P_2(\R^d)$, the Monge tangent space can be seen as the image of the restriction of the barycentric projection to the geometric tangent space. 
Moreover, it is not difficult to show that $\text{Pr}$ is onto and a $1-$contraction (see \cite[Definition~12.4.3]{AGS},  and the subsequent discussion). Namely
\begin{align}
    \text{Pr}(\Tan^{K}_{\mu}\sP_2(\R^d))&=\Tan^{M}_{\mu}\sP_2(\R^d), \label {e: proj} \\
\|\text{Pr}(\gamma_1)-\text{Pr}(\gamma_2)\|_{L^2_{\mu}(\R^d)}&\leq W_\mu(\gamma_1,\gamma_2) \quad \forall \gamma_1,\gamma_2\in \mathcal{P}_{2,\mu}(\R^{2d}). \label{e: Pr}
\end{align}

When $\mu$ is absolutely continuous, we can be more precise: this projection is a one-to-one isometry w.r.t. the distances previously introduced, as shown in \cite[Theorem ~12.4.4]{AGS}. Thus
\begin{equation}\label{isometryTangent}
    (\Tan^{K}_{\mu}\mathcal P_2(\R^d),W_\mu)\simeq_{\text{Pr}}(\Tan^{M}_{\mu}\mathcal P_2(\R^d),\|\cdot\|_{L^2_\mu}).
\end{equation}

Let us note that, using the characterization of optimal maps for absolutely continuous measures, we also have the following alternative description for the tangent space (see  \cite[Definition ~8.4.1 $\&$ Theorem ~8.5.1]{AGS}):
\begin{equation}\label{Tangentspace}
    \Tan^{M}_{\mu}\mathcal P_2(\R^d) = \overline{\left\{ \nabla \phi \colon \phi \in C^{\infty}_c(\R^d) \right\}}^{L^2_\mu(\R^d)}.
\end{equation}

In particular, at an absolutely continuous measure $\mu$ the scalar product \eqref{scalar} is the standard scalar product of $L_{\mu}^2(\R^d)$ restricted to $\Tan^{M}_{\mu}\mathcal P_2(\R^d)$.

\subsection{Subdifferential calculus in the Wasserstein Space.}
\subsubsection{First variation and Flat structure}

We now briefly recap the principal notions of differential calculus in Wasserstein spaces. For more details, see \cite[Section ~10.3]{AGS} and \cite[Chapter~5]{CarmonaDelarue}.
At the end of the discussion, we will select a unique differential that can be used  for equations  \eqref{HJBDet} and \eqref{HJBId}.\\

We begin with the definition of the flat derivative, noting that the Wasserstein space can be seen as a convex subset of the vector space of finite Borel measures $\mathcal{M}(\R^d)$. 
We adopt a more general definition, inspired by the notions of first derivative on $\mathcal{P}_2(\R^d)$ introduced in  \cite{PierreCaNotes} and \cite{santambrogio2015optimal}. The growth assumption is taken from \cite[Chapter ~5]{CarmonaDelarue}. 
\begin{defn}
Let \( U : \mathcal{P}_2(\mathbb{R}^d) \to \mathbb{R} \). 
We say that \( U \) is \emph{flat differentiable} at $\mu$, if there exists  a Borel measurable map with quadratic growth denoted \begin{comment}
\end{comment}
\[
\mathcal{D}_{\mu}U(\mu,\cdot) : \mathbb{R}^d \to \mathbb{R},
\]
s.t., $\forall  \nu \in \mathcal{P}_2(\R^d)$, it holds
\[
\lim_{h \to 0^+}\frac{U( (1-h)\mu + h\nu) - U(\mu)}{h} =  \int_{\mathbb{R}^d} \mathcal{D}_{\mu}U(\mu,x) (\nu - \mu)(dx)\, .
\]

We say that $U:\mathcal P_2(\mathbb R^d)\to\mathbb R$ is $C^1$
(resp.\ \emph{fully} $C^1$) if there exists $\mathcal D_\mu U : \mathcal{P}_2(\R^d)\times \R^d \to \R$ that is jointly continuous with uniform quadratic growth (resp. if $U$ is $C^1$ and there exists $\nabla \mathcal D_\mu U: \mathcal{P}_2(\R^d)\times \R^d \to \R^d$ that is jointly continuous and with uniform linear growth). 

We say that $U:\mathcal P_2(\mathbb R^d)\to\mathbb R$ is \emph{partially} $C^2$ if  $U$ is fully $C^1$ and  there exists $\nabla^2\mathcal D_\mu U : \mathcal{P}_2(\R^d)\times \R^d \to M_{d\times d}(\R)$ that is jointly continuous and uniformly bounded.

\end{defn}

Since the convex interpolation is restricted to be in $\mathcal{P}_2(\R^d)$ and $\int \text{cost} (\nu-\mu) dy =0$, the flat derivative $\mathcal{D}_{\mu}U$ is defined up to a constant   depending only on $\mu$. We select this constant s.t. $\int_{\R^d}\mathcal{D}_{\mu}U(\mu,x)d\mu(x)=0.$ Due to this choice of constant, we have the following representation formula: if $U$ is flat differentiable at $\mu$, then
\begin{equation}\label{e:FlatderivativeRepresentiation}
    \mathcal{D}_{\mu}U(\mu,y)=  \lim_{h \to 0^+} \frac{1}{h} \left( U\left((1-h)\mu + h\delta_y\right) - U(\mu) \right), \quad \forall y \in \R^d. 
\end{equation}

\subsubsection{Wasserstein differential calculus}
Various notions of differentiability have been introduced in the literature. Let us start with two classical notions that are given with two different points of view.  The first one sees the elements of the differential as plans while the second one as vector valued functions. The two notions can be related via the barycentric projection map introducing an intermediate notion of differential.\\

In the following, we will provide only the definition of subdifferential of a proper and lower semicontinuous function since the superdifferential  of a proper and upper semicontinuous function  $\phi \colon \mathcal{P}_2(\R^d) \to [-\infty, \infty )$ can be obtained by setting $\partial^{+}_{\cdot}\phi(\mu):=\big\{-1\cdot\gamma: \gamma \in \partial_{\cdot}^{-}(-\phi)(\mu)\big\}$ in the definitions \ref{d: classicdiff} \ref{d: inter_diff}, \ref{d: SDiff1} and \ref{d: SDiff}.  As already remarked, this operation leaves invariant the geometric tangent space. Whenever $\partial^{-}_\cdot (-\phi)(\mu) \subset L^2_\mu(\R^d)$, as in Definitions \ref{d: classicdiffL} and \ref{d: sDiffL}, we simply consider $\partial^{+}_\cdot \phi(\mu)=-\partial^{-}_\cdot (-\phi)(\mu)$. We denote by $\text{Dom} (\phi):= \Big\{\mu \in \mathcal{P}_2(\R^d): \phi(\mu)<\infty\Big\}$ the effective domain of $\phi$.

In what follows, we introduce the various notions of differential within the formalism established above. For each definition, we explicitly indicate the corresponding reference.

\begin{defn}[See  \cite{GigliPhd}, Definition  5.14] \label{d: classicdiff} 
Let $\phi \colon \mathcal{P}_2(\R^d) \to (-\infty, +\infty ]$ be proper and lower semicontinuous. Let $\mu\in \text{Dom} (\phi)$. We say that $\gamma \in \mathcal{P}_{2, \mu}(\R^{2d})$ 
\begin{comment}
\in \Tan^{K}_{\mu}(\R^d)
\end{comment}
belongs to the Kantorovich subdifferential of $\phi$ at $\mu$, denoted $\partial^{-}_{K}\phi(\mu)$, if for all $\xi \in \Tan^K_{\mu}\mathcal{P}_2(\R^{d})$
    \begin{equation*}
    \phi(\exp_{\mu}(\xi)) - \phi(\mu) \geq -\big<-\gamma,\xi\big>_{\mu} + o(\|\xi\|_{\mu}).
    \end{equation*}
\end{defn}

Note that in \cite[Definition~5.14]{GigliPhd}, the author additionally requires that $\partial^{-}_K \phi(\mu) \subset \Tan^K_{\mu} \mathcal{P}_2(\R^d)$.
In the present work, we choose not to impose this restriction at the level of the definition, in order to allow for a more refined comparison with other notions.

\begin{defn}[See \cite{GANGBO2019119}, Definition 3.1]
  \label{d: classicdiffL}
Let $\phi \colon \mathcal{P}_2(\R^d) \to (-\infty, +\infty ]$ be proper and lower semicontinuous. Let $\mu\in \text{Dom} (\phi)$. We say that $p \in L^2_\mu(\R^d)$ 
belongs to the Monge subdifferential of $\phi$ at $\mu$, denoted $\partial^{-}_{M}\phi(\mu)$, if for all $\xi \in \Tan^K_{\mu}\mathcal{P}_2(\R^d)$ 
    \begin{equation*}
    \phi(\exp_{\mu}(\xi)) - \phi(\mu) \geq  \int_{\R^{2d}} \langle p(x), z \rangle \, d\xi(x,z) + o(\|\xi\|_{\mu}).
\end{equation*}
\end{defn}

In order to formalize the link between the previous two classical notions, let us introduce the following  intermediate definition, given in the spirit of Definition \ref{d: SDiff} below, introduced in \cite{BSOT}.

\begin{defn}[Intermediate Definition]  \label{d: inter_diff}
Let $\phi \colon \mathcal{P}_2(\R^d) \to (-\infty, +\infty ]$ be proper and lower semicontinuous. Let $\mu\in \text{Dom} (\phi)$.  We say that $\gamma \in \mathcal{P}_{2,\mu}(\R^{2d})$ 
\begin{comment}
\in \Tan^{K}_{\mu}(\R^d)
\end{comment}
belongs to the intermediate Kantorovich subdifferential of $\phi$ at $\mu$, denoted $\partial^{-}_{I}\phi(\mu)$, if for all $\xi \in \Tan^K_{\mu}\mathcal{P}_2(\R^d)$ 
    \begin{equation*}
    \phi(\exp_{\mu}(\xi)) - \phi(\mu) \geq  \int_{\R^{2d}} \big< y, z \big>  \gamma_x(dy)d\xi(x,z) + o(\|\xi\|_{\mu}),
\end{equation*}
    where $\gamma_{x}$ is the disintegration of $\gamma$ with respect to its first marginal $\mu$, i.e. $\gamma=\gamma_{x}(dy)\mu(dx)$. 

\end{defn}

Let us note that
$$\partial^{-}_{I}\phi(\mu)\subseteq \partial^{-}_{K}\phi(\mu)\quad \text{and} \quad \text{Pr}(\partial^{-}_{I}\phi(\mu))= \partial^{-}_{M}\phi(\mu).$$
Moreover, intersecting the subdifferential with the geometric tangent space, due to the properties of the barycentric projection map on the geometric tangent space, see equation \eqref{e: proj}, we have $$\text{Pr}(\partial^{-}_{I}\phi(\mu)\cap\Tan^{K}_{\mu}\sP_2(\R^d))= \partial^{-}_{M}\phi(\mu)\cap\Tan^{M}_{\mu}\sP_2(\R^d).$$

Suppose $\mu$ is absolutely continuous, then $\Tan^K_{\mu}\mathcal{P}(\R^d)$ can be isometrically identified  with $\Tan^M_{\mu}\mathcal{P}_2(\R^d)\subset L^2_{\mu}(\R^d)$. Therefore, the three inequalities above can be, equivalently, tested on maps and read: for all $\xi\in L^2_\mu(\R^d)$
\begin{equation*}
    \phi(\exp_{\mu}((\Id \times \xi )_{\#}\mu)) - \phi(\mu) \geq \int_{\R^{d}} \big< \Pr( \gamma)(x),\xi (x) \big>d\mu(x)  + o(\|\xi\|_{L^2_\mu}),
\end{equation*}
since in this case $\big<(\text{Id}\times \xi)_\#{\mu},\gamma\big>_{\mu}=\big<\xi,\Pr(\gamma)\big>_{L^2_{\mu}(\R^d)}.$
Moreover, being the barycentric projection a one-to-one isometry,  we have $$\partial^{-}_{K}\phi(\mu)\cap\Tan^{K}_{\mu}\sP_2(\R^d) \simeq_\text{Pr}\partial^{-}_{M}\phi(\mu)\cap\Tan^{M}_{\mu}\sP_2(\R^d),$$
thus establishing a strong  connection between the two differentials  in the case of absolutely continuous measures.

Note that the set $\partial^{-}_{M}\phi(\mu)\cap\Tan^{M}_{\mu}\sP_2(\R^d)$ is the subdifferential set defined in \cite{DS}. \\

The unique element in $\partial^{+}_{K}\phi(\mu)\cap \partial^{-}_{K}\phi(\mu) \cap \Tan^K_{\mu}\mathcal{P}_2(\R^d)$, if any, is called the Wasserstein differential of $\phi$, see \cite[Definition ~5.14]{GigliPhd}. In this case, we say that $\phi$ is differentiable at $\mu$ and $\partial_{\mu}\phi$ is the Wasserstein differential at $\mu$.

A stronger notion of subdifferential can be obtained by requiring, in Definitions \ref{d: classicdiff},\ref{d: classicdiffL} and \ref{d: inter_diff}, that the corresponding inequalities hold for a larger class of perturbations, e.g. without assuming the measure vector field $\xi$ to lie in the tangent space. This restriction will select more stable elements in the subdifferentials, and it has already been used in the gradient flow theory developed in\cite{AGS} to treat the Euler equation of the minima of the Yosida approximation of a given functional. We also remark that this strongest notion is linked with the so-called Hilbertian lift. See \cite[Remark ~2.5 $\&$ Proposition ~2.6]{BSOT} and the introduction in \cite{GANGBO2019119}.

\begin{defn}[See \cite{AGS}, Definition 10.3.1] Strong Fr\'echet Subdifferential]\label{d: SDiff1}
Let $\phi  \colon \mathcal{P}_2(\R^d) \to (-\infty, +\infty ]$ be proper and lower semicontinuous. Let $\mu\in \text{Dom} (\phi)$. We say that $\gamma \in \mathcal P_{2,\mu}(\R^{2d})$ belongs to the strong subdifferential of $\phi$ at $\mu$, denoted $\partial^{-}_{S}\phi(\mu)$, if for all $\xi\in \mathcal{P}_{2,\mu}(\R^{2d})$ and $\theta \in \Gamma_{\mu}(\gamma, \xi)$,
    \begin{equation*}
        \phi(\exp_{\mu}(\xi)) - \phi(\mu) \geq \big<\xi,\gamma\big>_{\theta,\mu} + o(\|\xi\|_{\mu}).
    \end{equation*}
\end{defn}

\begin{defn} \label{d: sDiffL}
Let $\phi \colon \mathcal{P}_2(\R^d) \to (-\infty, +\infty ]$ be proper and lower semicontinuous. Let $\mu\in \text{Dom} (\phi)$. We say that $p \in L^2_\mu(\R^d)$ 
belongs to the (strong-Monge) subdifferential of $\phi$ at $\mu$, denoted $\partial^{-}_{S,M}\phi(\mu)$, if for all $\xi \in \mathcal{P}_{2,\mu}(\R^{2d})$  
    \begin{equation*}
    \phi(\exp_{\mu}(\xi)) - \phi(\mu) \geq \int_{\R^{2d}} \big< p(x), z \big> d\xi(x,z) + o(\|\xi\|_{\mu}).
\end{equation*}

\end{defn}

\begin{defn}[See \cite{BSOT}, Definition 2.2] \label{d: SDiff}
Let $\phi  \colon \mathcal{P}_2(\R^d) \to (-\infty, +\infty ]$ be proper and lower semicontinuous. Let $\mu\in \text{Dom} (\phi)$. We say that $\gamma \in \mathcal P_{2,\mu}(\R^{2d})$ belongs to the map strong subdifferential of $\phi$ at $\mu$, denoted $\partial^{-}_{S,I}\phi(\mu)$, if for all $\xi \in \mathcal{P}_{2,\mu}(\R^{2d})$
    \begin{equation*}\label{StrongDiffBertucci}
        \phi(\exp_{\mu}(\xi)) - \phi(\mu) \geq \int_{\R^{3d}} \big< y, z \big> \gamma_{x}(dy)d\xi(x,z) + o(\|\xi\|_{\mu}),
    \end{equation*}
    where $\gamma_{x}$ is the disintegration of $\gamma$ with respect to its first marginal $\mu$, i.e. $\gamma=\gamma_{x}(dy)\mu(dx)$. 

\end{defn}

The above definition was introduced in \cite{BSOT} to prove the well-posedness of viscosity solutions of Hamilton-Jacobi equations in $\mathcal{P}(\mathbb{T}^d)$. 

\begin{rmk}
    Let us note that w.r.t.  \cite[Definition ~2.2]{BSOT}, we preferred here to describe the elements of the subdifferential as plans instead of using the equivalent identification of $\gamma$ with $\gamma_{x}$ $\mu-$a.e. determined by $\gamma(dx,dy)=\gamma_{x}(dy)\mu(dx)$. We do the same for the notion of viscosity solutions adopted in the first-order case.
\end{rmk}

As for the previous definitions, let us note that
$$\partial^{-}_{S}\phi(\mu)\subseteq \partial^{-}_{S,I}\phi(\mu), \quad \quad \text{Pr}(\partial^{-}_{S,I}\phi(\mu))= \partial^{-}_{S,M}\phi(\mu)  \quad \text{ and } \quad  \text{Pr}(\partial^{-}_{S}\phi(\mu))\subseteq \partial_{S,M}^{-}\phi(\mu)$$
Moreover,  we have
    $$\text{Pr} (\partial ^-_{S,I} \phi(\mu)\cap\Tan^{K}_{\mu}\sP_2(\R^d))= \partial ^-_{S,M} \phi(\mu) \cap \Tan^{M}_{\mu}\sP_2(\R^d),$$
    while the isometries $$\partial^{-}_{S,I}\phi(\mu)\cap\Tan^{K}_{\mu}\sP_2(\R^d) \simeq_\text{Pr} \partial^{-}_{S,M}\phi(\mu)\cap\Tan^{M}_{\mu}\sP_2(\R^d)$$ 
 holds for absolutely continuous $\mu$. 
\begin{rmk}
    Let us note that 
    $$ \partial ^-_S \phi(\mu)\subseteq \partial ^-_{S, I} \phi(\mu)\subseteq \partial ^-_{I} \phi(\mu)\subseteq \partial ^-_K \phi(\mu), $$
    and
    $$ \partial ^-_{S,M} \phi(\mu)\subseteq  \partial ^-_{M} \phi(\mu).  $$
   Obviously, the same inclusions holds, with suitable modifications, for the superdifferentials.
\end{rmk}
\begin{lem}\label{Strong=Opt}(Correspondent of \cite[Theorem ~3.14]{GANGBO2019119} in our setting)
Let $\phi: \mathcal{P}_2(\R^d)\to (-\infty,+ \infty]$ be a proper lower semicontinuous functional, and $\mu \in \text{Dom} (\phi)$. Then,
\begin{align*}
    \partial_{S,M}^{-}\phi(\mu)\cap \Tan^{M}_{\mu}\mathcal{P}_2(\R^d)&=\partial^{-}_{M}\phi(\mu)\cap \Tan^{M}_{\mu}\mathcal{P}_2(\R^d).
\end{align*}
Moreover, when $\mu$ is absolutely continuous w.r.t. the Lesbsegue measure
\begin{equation}\label{eq:last}
    \partial^{-}_S\phi(\mu)\cap \Tan^K_{\mu}\mathcal{P}_2(\R^d)\simeq_\mathrm{Pr}\partial^{-}_{M}\phi(\mu)\cap \Tan^{M}_{\mu}\mathcal{P}_2(\R^d).
\end{equation}
\end{lem}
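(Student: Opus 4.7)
The plan is to handle the two statements separately. For the first equality, the inclusion $\partial^-_{S,M}\phi(\mu)\cap\Tan^M_\mu\mathcal P_2(\R^d)\subseteq\partial^-_M\phi(\mu)\cap\Tan^M_\mu\mathcal P_2(\R^d)$ is immediate, since Definition \ref{d: sDiffL} tests the subdifferential inequality against the larger class $\mathcal P_{2,\mu}(\R^{2d})$, which contains the geometric tangent plans appearing in Definition \ref{d: classicdiffL}. For the reverse direction, I would fix $p\in\partial^-_M\phi(\mu)\cap\Tan^M_\mu\mathcal P_2(\R^d)$ and an arbitrary $\xi\in\mathcal P_{2,\mu}(\R^{2d})$ with $\|\xi\|_\mu$ small; setting $\nu:=\exp_\mu(\xi)$, pick $\tilde\eta\in\Gamma_0(\mu,\nu)$ and let $\eta:=(\pi_1,\pi_2-\pi_1)_\#\tilde\eta\in\Tan^K_\mu\mathcal P_2(\R^d)$, so that $\|\eta\|_\mu=W_2(\mu,\nu)\leq\|\xi\|_\mu$. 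Applying the $\partial^-_M$ inequality to $\eta$, the problem reduces to the key estimate
\[
\int_{\R^{2d}}\langle p(x),z\rangle\,d\eta(x,z)-\int_{\R^{2d}}\langle p(x),z\rangle\,d\xi(x,z)=o(\|\xi\|_\mu).
\]

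For this estimate I would approximate $p$ in $L^2_\mu$ by smooth compactly supported gradients $\nabla\varphi_k$, as granted by the characterization \eqref{Tangentspace}. The crucial observation is that both $\xi$ and $\eta$ have first marginal $\mu$ and push $\mu$ forward through $\exp_\mu$ onto $\nu$, so
\[
\int[\varphi_k(x+z)-\varphi_k(x)]\,d\xi(x,z)=\int\varphi_k\,d\nu-\int\varphi_k\,d\mu=\int[\varphi_k(x+z)-\varphi_k(x)]\,d\eta(x,z).
\]
Combined with a first-order Taylor expansion of $\varphi_k$, whose quadratic remainder is controlled by $\mathrm{Lip}(\nabla\varphi_k)|z|^2/2$, this yields
\[
\Big|\int\langle\nabla\varphi_k(x),z\rangle\,d\eta-\int\langle\nabla\varphi_k(x),z\rangle\,d\xi\Big|\leq \mathrm{Lip}(\nabla\varphi_k)\,\|\xi\|_\mu^2.
\]
Splitting $p=\nabla\varphi_k+(p-\nabla\varphi_k)$ and bounding the residual by Cauchy--Schwartz together with the contraction $\|\mathrm{Pr}(\eta)-\mathrm{Pr}(\xi)\|_{L^2_\mu}\leq 2\|\xi\|_\mu$ from \eqref{e: Pr}, a three-scale diagonal argument (first fix $\varepsilon>0$, then choose $k=k(\varepsilon)$ with $\|p-\nabla\varphi_k\|_{L^2_\mu}<\varepsilon$, then send $\|\xi\|_\mu\to 0$) closes the case.

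For the second identification, I would leverage the map structure of $\Tan^K_\mu\mathcal P_2(\R^d)$ when $\mu$ is absolutely continuous. For any $\gamma$ such that $(\pi_1,\pi_1+\varepsilon\pi_2)_\#\gamma$ is optimal, Brenier's theorem forces that optimal plan to be map-induced, and unwinding gives $\gamma=(\Id\times p)_\#\mu$ with $p=\mathrm{Pr}(\gamma)\in\Tan^M_\mu\mathcal P_2(\R^d)$; this property survives $W_\mu$-closure, because $W_\mu$-convergence of map-induced plans is exactly $L^2_\mu$-convergence of their barycentric projections. Since such $\gamma$ is induced by a map, disintegration forces $\Gamma_\mu(\gamma,\xi)$ to be a singleton for every $\xi\in\mathcal P_{2,\mu}(\R^{2d})$, and a direct computation gives $\langle\xi,\gamma\rangle_{\theta,\mu}=\int\langle p(x),z\rangle\,d\xi(x,z)$. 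Hence Definition \ref{d: SDiff1} applied to $\gamma$ reads verbatim as Definition \ref{d: sDiffL} applied to $p$, proving $\partial^-_S\phi(\mu)\cap\Tan^K_\mu\mathcal P_2(\R^d)\simeq_{\mathrm{Pr}}\partial^-_{S,M}\phi(\mu)\cap\Tan^M_\mu\mathcal P_2(\R^d)$. Combined with the first identity, this gives \eqref{eq:last}.

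The main obstacle lies in the first equality: the Lipschitz constant $\mathrm{Lip}(\nabla\varphi_k)$ appearing in the Taylor bound inevitably blows up as $k\to\infty$, so the smooth-approximation error cannot be controlled uniformly in $k$. The diagonal argument resolves this only if one carefully balances the three scales $\|\xi\|_\mu$, $\|p-\nabla\varphi_k\|_{L^2_\mu}$, and $\mathrm{Lip}(\nabla\varphi_k)$; it is precisely the 1-Lipschitz property \eqref{e: Pr} of the barycentric projection that forces the residual term to scale linearly in $\|\xi\|_\mu$, which is the correct order to be absorbed into $o(\|\xi\|_\mu)$.
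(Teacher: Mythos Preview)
Your argument is correct and reaches the same conclusion as the paper, but the route for the first equality is organized differently. The paper argues by contradiction: assuming the strong inequality fails along a sequence $(\xi_n)$ with $\|\xi_n\|_\mu=\varepsilon_n\to 0$, it builds optimal counterparts $\tilde\xi_n$, rescales the barycentric projections $\mathrm{Pr}(\xi_n)/\varepsilon_n$ and $\mathrm{Pr}(\tilde\xi_n)/\varepsilon_n$, and passes to weak $L^2_\mu$ limits; a one-sided Taylor bound on smooth $\psi$ then forces $\langle p,\xi-\tilde\xi\rangle_\mu\geq 0$ at the limit, contradicting the strict gap. You instead prove directly that $\int\langle p,z\rangle\,d\eta-\int\langle p,z\rangle\,d\xi=o(\|\xi\|_\mu)$ via the two-sided Taylor remainder $\mathrm{Lip}(\nabla\varphi_k)\|\xi\|_\mu^2$ and the residual $2\|p-\nabla\varphi_k\|_{L^2_\mu}\|\xi\|_\mu$, closing with an $\varepsilon$--$k$--$\|\xi\|_\mu$ diagonal. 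Both proofs rest on the same two ingredients---the identity $\int\varphi_k\,d\nu$ common to $\xi$ and $\eta$, and the density \eqref{Tangentspace}---but your version is more elementary since it avoids the weak compactness step; the paper's version, adapted from Erbar, packages the same estimates through subsequential weak limits. For the second identification your treatment via Brenier and the singleton structure of $\Gamma_\mu(\gamma,\xi)$ for map-induced $\gamma$ is essentially what the paper sketches in one line when it invokes \eqref{isometryTangent} together with $\mathrm{Pr}(\partial^-_S\phi(\mu))=\partial^-_{S,M}\phi(\mu)$ at absolutely continuous $\mu$.
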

\begin{proof}
In view of the inclusion $\partial ^-_{S,M} \phi(\mu)\subseteq  \partial ^-_{M} \phi(\mu)$, it is sufficient to prove only the $\supseteq$ part. The proof is an adaptation of \cite[ Lemma ~3.2]{Erbar2010}.

Suppose the the inclusion does not hold: There exists $p\in \partial^{-}_{M}\phi(\mu)\cap \Tan^{M}_{\mu}\mathcal{P}_2(\R^d), \, \delta>0$ and a sequence $(\xi_n)_n \subset \mathcal{P}_{2,\mu}(\R^d)$ with $\|\xi_n\|_\mu=\varepsilon_n \to 0$ s.t. 
\begin{equation*}
    \phi(\exp_{\mu}(\xi_n))-\phi(\mu)-\int_{\R^{2d}}\big<p(x),z\big>d\xi_n(x,z)\leq -\delta\varepsilon_n.
\end{equation*}

Choose $(\tilde \xi_n)_n \subset \Tan^K_{\mu}\mathcal{P}_2(\R^d)$ s.t. $\forall n\in \N$
\begin{align*}
    &\exp_{\mu}(\xi_n)=\exp_{\mu}(\tilde \xi_n)\\
    &\|\tilde \xi_n\|_{\mu}=W_2(\mu,\exp_{\mu}(\tilde \xi_n))=W_2(\mu,\exp_{\mu}(\xi_n))\leq \|\xi_n\|_{\mu}=\varepsilon_n.
\end{align*}
In other words, $\tilde \xi_n$ is an optimal velocity plan connecting $\mu$ and $\exp_{\mu}(\xi_n)$.
Since $p \in \partial^{-}_{M}\phi(\mu)$, there exists $N>0$ s.t.
\begin{equation*}
     \phi(\exp_{\mu}(\xi_n))-\phi(\mu)-\int_{\R^{2d}}\big<p(x),z\big>d\tilde \xi_n(x,z)\geq -\frac{1}{2}\delta\varepsilon_n, \quad \forall n>N.
\end{equation*}
By combining the two inequalities we deduce that 
\begin{align*}
    \big<(\Id\times p)_{\#}\mu, \frac{\text{Pr}(\tilde \xi_n)}{\varepsilon_n}\big>_{\mu}-\big<(\Id\times p)_{\#}\mu, \frac{\text{Pr}(\xi_n)}{\varepsilon_n}\big>_{\mu}&=
    \big<(\Id\times p)_{\#}\mu, \frac{\tilde \xi_n}{\varepsilon_n}\big>_{\mu}-\big<(\Id\times p)_{\#}\mu, \frac{\xi_n}{\varepsilon_n}\big>_{\mu}\\
    &\leq -\frac{\delta}{2}\varepsilon_n \quad \forall n>N,
\end{align*}
where we used the scalar product defined in \eqref{scalar} and its property along graphs.
We now observe that 
$$\left\|\frac{\text{Pr}(\tilde \xi_n)}{\varepsilon_n}\right\|_{\mu} \underbrace{\leq}_{\eqref{e: Pr}, \eqref{e: normW}
} \left\|\frac{\tilde \xi_n}{\varepsilon_n}\right\|_{\mu} , \quad  \left\|\frac{\text{Pr}(\xi_n)}{\varepsilon_n}\right\|_{\mu}\underbrace{\leq}_{\eqref{e: Pr}, \eqref{e: normW}
} \left\|\frac{\xi_n}{\varepsilon_n}\right\|_{\mu}$$
are both bounded by construction. We can then extract a weak limit in $L^2_{\mu}(\R^d)$ of the two, not relabeled, sequences $\left(\frac{\text{Pr}(\tilde \xi_n)}{\varepsilon_n}\right)_{n}, \left(\frac{\text{Pr}(\xi_n)}{\varepsilon_n}\right)_{n}$, that we call  $\tilde \xi, {\xi} \in L^2_{\mu}(\R^d)$, respectively. In particular, 

\begin{align}\label{e: contraddictionestimate}
   &\big<(\Id\times p)_{\#}\mu, \tilde \xi\big>_{\mu}-\big<(\Id\times p)_{\#}\mu,\xi\big>_{\mu}= \\
   &= \lim_{n \to \infty}\big<(\Id\times p)_{\#}\mu, \frac{\text{Pr}(\xi_n)}{\varepsilon_n}\big>_{\mu}-\big<(\Id\times p)_{\#}\mu, \frac{\text{Pr}( \tilde\xi_n)}{\varepsilon_n}\big>_{\mu} \nonumber \\
    &=\lim_{n \to \infty}\big<(\Id\times p)_{\#}\mu, \frac{\xi_n}{\varepsilon_n}\big>_{\mu}-\big<(\Id\times p)_{\#}\mu, \frac{\tilde{\xi}_n}{\varepsilon_n}\big>_{\mu}\leq -\frac{\delta}{2}. \nonumber
\end{align}
On the other hand, due to the construction of the plans $\xi_n, \tilde\xi_n$, we have that, for any $\alpha_n \in \Gamma_{\mu}(\xi_n,\tilde{\xi}_n)$, the equality
\begin{equation*}
\int_{\R^{3d}}\Big(\psi(z_0-x)-\psi(z_1-x)\Big)d\alpha_n(x,z_0,z_1)=\int_{\R^{3d}} \psi(y)d \exp_{\mu}(\xi_n)(y)-\int_{\R^{3d}} \psi(y)d \exp_{\mu}(\tilde\xi_n)(y)=0, 
\end{equation*}
holds $\forall \psi \in C^{\infty}_c(\R^d)$. Moreover, given $\psi \in C^{\infty}_c(\R^d)$, by a Taylor expansion of the function and its global semiconcavity, using the previous equality, there exists $C>0$ s.t.
\begin{equation*}
    0 \leq \int_{\R^{3d}} \big<\nabla \psi(x), z_0-z_1\big>d\alpha_n(x,z_0,z_1)+ C \big(\int_{\R^{2d}} |z_0|^2d\xi_n(x,z_0) _+ \int_{\R^{2d}} |z_1|^2d\tilde \xi_n(x,z_1)\big).
\end{equation*}
We then deduce 
\begin{align*}
   0&\leq \int_{\R^{3d}} \big<\nabla \psi(x), z_0-z_1\big>d\alpha_n(x,z_0,z_1) +C \big(\|\xi_n\|^2_{\mu}+  \|\tilde \xi_n\|^2_{\mu}\big)\\
   &\leq \big<(\Id \times \nabla \psi)_{\#}\mu,\xi_n\big>_{\mu}-\big<(\Id \times \nabla \psi)_{\#}\mu,\tilde\xi_n\big>_{\mu} +C \varepsilon^2_n\\
   &= \big<(\Id \times \nabla \psi)_{\#}\mu,\text{Pr}(\xi_n)\big>_{\mu}-\big<(\Id \times \nabla \psi)_{\#}\mu,\text{Pr}(\tilde\xi_n)\big>_{\mu}
\end{align*}
Dividing by $\varepsilon_n$ and passing to the limit $\varepsilon_n \to 0$, we obtain
\begin{equation*}
    \big<(\Id \times \nabla \psi)_{\#}\mu,\xi\big>_{\mu}-\big<(\Id \times \nabla \psi)_{\#}\mu,\tilde{\xi} \big>_\mu\geq 0.
\end{equation*}
Consider now the  $p \in \Tan^M_{\mu}\mathcal{P}_2(\R^d)$ given at the beginning of the proof: since $W_{\mu}((\Id \times p)_{\#}{\mu}, (\Id \times \nabla \psi)_{\#}{\mu})=\|p-\nabla \psi\|_{L^2_\mu(\R^d)}$, a density argument and the alternative description of the tangent space given in \eqref{Tangentspace} lead us to
$$
 \big<(\Id \times p)_{\#}\mu,\xi\big>_{\mu}-\big<(\Id \times p)_{\#}\mu,\tilde{\xi} \big>_\mu\geq 0,
$$
that together with \eqref{e: contraddictionestimate} gives the desired contraddiction.

The isometric relation \eqref{eq:last} follows from \eqref{isometryTangent}, and from the fact that, at absolutely continuous measures, $\text{Pr}(\partial^{-}_{S}\phi(\mu))=\partial_{S,M}^{-}\phi(\mu)$. 
\end{proof}

\begin{rmk}
In view of the previous lemma, we will frequently exploit the identification
provided by the isometric relation~\eqref{eq:last}. In particular, we shall not
distinguish between the measure 
\((\mathrm{Id}\times p)_{\#}\mu \in 
\partial^{-}_S \phi(\mu)\cap \Tan^K_{\mu}\mathcal{P}_2(\mathbb{R}^d)\)
and its projection
$
p := \mathrm{Pr}\big((\mathrm{Id}\times p)_{\#}\mu\big),
$
which is regarded as an element of
\(\partial^{-}_M \phi(\mu)\cap \Tan^{M}_{\mu}\mathcal{P}_2(\mathbb{R}^d)\).

\end{rmk}

\subsection{Selection of the differential} 
In the notion of viscosity solutions and from now on, given $\phi  \colon \mathcal{P}_2(\R^d) \to (-\infty, +\infty ]$ proper and lower semicontinuous and $\mu\in \text{Dom} (\phi)$ we adopt the following  differential
\begin{align}\label{e: differentialdefinitivo}
    \partial^{-}\phi(\mu):=\partial^{-}_S\phi(\mu)\cap \Tan^{K}_{\mu} \mathcal P _2(\R^d).
\end{align}
This definition extends to $\partial^{+}$ by the reverse operation $\gamma \mapsto -1 \cdot \gamma$, which, as remarked, leaves the tangent space invariant.

Note that the intersection $\partial^{-}\phi(\mu) \cap \partial^{+}\phi(\mu)$ contains at most one element since $\partial^{+}_S\phi(\mu)\cap\partial^{-}_S\phi(\mu)$ does.  Whenever this element exists, we say that the continuous function $\phi$ is differentiable at $\mu\in \mathcal{P}_2(\R^d)$.  This element, denoted with $\partial_{\mu}\phi$,  coincides with the Wasserstein gradient (and it is always a map,  otherwise  $\text{Pr}(\partial_\mu\phi)$ would be a different element in the intersection). As a consequence, if $\phi$ is fully $C^1$ then $\nabla \mathcal{D}_{\mu}\phi=\partial_{\mu}\phi$, see the structural results \cite[Lemma ~5.61 \& Theorem ~5.65]{CarmonaDelarue}.  

We end by stressing that thanks to Lemma \ref{Strong=Opt}, at absolutely continuous measures, our notion of subdifferential coincides, up to identification via the barycentric projection, with the one adopted in \cite{DS} in the definition of viscosity solutions of \eqref{HJBId}.
Moreover, being $\partial^{-}\subseteq \partial^{-}_{S,I}$, our subdifferential is a subset of the subdifferential introduced in \cite{BSOT} to treat viscosity solutions of \eqref{HJBDet}. 
\begin{rmk}[On the extrinsic and intrinsic geometry of the Wasserstein space]\label{ComparisonNotionofdiff} 
It is worth mentioning that we do not know if $\partial^{-}=\partial^{-}_K\cap \Tan^K_{(\cdot)}\mathcal{P}_2(\R^d)$, as in the Monge case, see Lemma \ref{Strong=Opt}. The answer to this question would give a correspondence between the extrinsic (e.g. $\partial^{-}_S$) and intrinsic (e.g. $\partial^{-}_K$) notions of differential, up to intersect them with the geometric tangent space.  We leave this question open for future research.

A similar result which explores the link between extrinsic and intrinsic geometry has been obtained in the recent paper \cite{CavSavSod23}, where an equivalence between geodesic convexity (intrinsic convexity) and strong (or total) convexity (extrinsic convexity) has been established for continuous functions, see Theorem 9.1 therein.
\end{rmk}

We state here an important result  that is 
fundamental in comparison arguments and describes the superdifferentiability of the Wasserstein distance squared that we need.\footnote{Actually, \cite[Proposition~2.4]{BSOT} is stated for $\partial^{+}_{S,I}$. However, a careful inspection of the simple proof reveals that the very same argument gives the result stated here also for our choice of superdifferential.}

\begin{prop}[See \cite{BSOT} Proposition 2.4]\label{prop:super}
For any $\mu,\nu \in \mathcal{P}(\R^d)$ and $\sigma \in \Gamma_{0}(\mu,\nu)$, consider the plan $\xi:=(\pi_1,\pi_1 -\pi_2)_{\#}\sigma$. Then the function $ \Phi : \eta \mapsto \frac{1}{2}W^2_2(\eta,\nu)$ is such that $\xi \in \partial^+\Phi(\mu).$
\end{prop}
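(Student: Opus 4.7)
Since $\partial^{+}\Phi(\mu)=\partial^{+}_S\Phi(\mu)\cap\Tan^{K}_\mu\mathcal P_2(\R^d)$ by the selection made in \eqref{e: differentialdefinitivo}, my strategy is to verify the two inclusions separately, the second being where the real content lies.

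For the tangent-space part I would exploit the invariance under the multiplication $\lambda=-1$ recorded just after Definition \ref{d: TanK}. A direct computation gives $-\xi=(\pi_1,\pi_2-\pi_1)_\#\sigma$, hence
$(\pi_1,\pi_1+\pi_2)_\#(-\xi)=\sigma$, which is optimal by hypothesis. Thus $-\xi$ lies in the (unclosed) generating set of $\Tan^{K}_\mu\mathcal P_2(\R^d)$ with $\varepsilon=1$, and applying negation once more yields $\xi\in\Tan^{K}_\mu\mathcal P_2(\R^d)$.

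For the strong superdifferential, unwinding the convention $\partial^{+}_S\Phi(\mu)=\{-\gamma:\gamma\in\partial^{-}_S(-\Phi)(\mu)\}$ reduces matters to showing, for every $\eta\in\mathcal P_{2,\mu}(\R^{2d})$ and every coupling $\theta\in\Gamma_\mu(\xi,\eta)$,
\begin{equation*}
    \Phi(\exp_\mu(\eta))-\Phi(\mu)\;\leq\;\langle\xi,\eta\rangle_{\theta,\mu}+o(\|\eta\|_\mu).
\end{equation*}
The key construction is to lift any such $\theta$, regarded as a plan on variables $(x,w,v)$, to a three-fold coupling $\theta':=(\pi_1,\pi_1-\pi_2,\pi_3)_\#\theta\in\mathcal P(\R^{3d})$ on variables $(x,y,v)$ via the deterministic map $(x,w,v)\mapsto(x,x-w,v)$. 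Because $\sigma=(\pi_1,\pi_1-\pi_2)_\#\xi$, the $(x,y)$-marginal of $\theta'$ is exactly $\sigma$, while its $(x,v)$-marginal is $\eta$; consequently $(\pi_1+\pi_3,\pi_2)_\#\theta'$ is an admissible coupling between $\exp_\mu(\eta)$ and $\nu$, giving
\begin{equation*}
    W_2^2(\exp_\mu(\eta),\nu)\;\leq\;\int_{\R^{3d}}|x+v-y|^2\,d\theta'(x,y,v).
\end{equation*}

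To conclude I would expand $|x+v-y|^2=|x-y|^2+2\langle x-y,v\rangle+|v|^2$ and identify the three integrals: optimality of $\sigma$ yields $\int|x-y|^2d\sigma=W_2^2(\mu,\nu)=2\Phi(\mu)$; the built-in change of variables $w=x-y$ gives $\int\langle x-y,v\rangle d\theta'=\int\langle w,v\rangle d\theta=\langle\xi,\eta\rangle_{\theta,\mu}$; and $\int|v|^2d\eta=\|\eta\|_\mu^2$. Dividing by $2$ produces the desired inequality with remainder $\tfrac12\|\eta\|_\mu^2=o(\|\eta\|_\mu)$. The main conceptual point---and what upgrades the argument from Kantorovich to strong superdifferentiability---is that the lift works for an \emph{arbitrary} $\theta\in\Gamma_\mu(\xi,\eta)$, not merely a canonical one, because $\sigma$ is recovered from $\xi$ deterministically through $(x,w)\mapsto(x,x-w)$; this is precisely the step where optimality of $\sigma$ is used and where the argument would fail if $\xi$ were built from a non-optimal coupling.
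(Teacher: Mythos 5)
Your proposal is correct and complete. Note first that the paper does not actually prove this proposition: it cites \cite[Proposition~2.4]{BSOT} together with a footnote saying that a careful inspection of that proof yields the result for the paper's selected superdifferential $\partial^+ = \partial^+_S \cap \Tan^K_\mu\mathcal P_2(\R^d)$. So what you have written out is precisely the ``careful inspection'' the footnote gestures at, and it checks out. Both parts of the membership are verified correctly: for the tangent-space part, $-\xi = (\pi_1,\pi_2-\pi_1)_\#\sigma$ satisfies $(\pi_1,\pi_1+\pi_2)_\#(-\xi) = \sigma$, which is optimal, so $-\xi$ is in the generating set of $\Tan^K_\mu\mathcal P_2(\R^d)$ and the invariance of the tangent space under $\lambda\cdot(\cdot)$ with $\lambda<0$ (\cite[Prop.~4.29]{GigliPhd}) gives $\xi\in\Tan^K_\mu\mathcal P_2(\R^d)$. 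For the strong superdifferential, lifting an arbitrary $\theta\in\Gamma_\mu(\xi,\eta)$ by $(x,w,v)\mapsto(x,x-w,v)$ produces a plan $\theta'$ whose $(x,y)$-marginal is $\sigma$ and whose $(x,v)$-marginal is $\eta$; the induced coupling $(\pi_1+\pi_3,\pi_2)_\#\theta'$ between $\exp_\mu(\eta)$ and $\nu$ then yields, after expanding the square,
\begin{equation*}
\Phi(\exp_\mu(\eta))\leq \Phi(\mu)+\langle\xi,\eta\rangle_{\theta,\mu}+\tfrac12\|\eta\|_\mu^2,
\end{equation*}
which is exactly the $\partial^+_S$ inequality with quadratic remainder.

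One minor imprecision in your closing remarks: optimality of $\sigma$ is not what makes the lift of an \emph{arbitrary} $\theta$ possible. The lift works purely because $\sigma = (\pi_1,\pi_1-\pi_2)_\#\xi$ is a deterministic transformation of $\xi$, and this holds whether or not $\sigma$ is optimal. Optimality is used in two other places: (a) to place $\xi$ in the geometric tangent space, and (b) to identify $\int|x-y|^2\,d\sigma$ with $W_2^2(\mu,\nu)=2\Phi(\mu)$ after expanding the square. If $\sigma$ were non-optimal, the lift would still go through but would produce the useless bound $\Phi(\exp_\mu(\eta))\leq \tfrac12\int|x-y|^2\,d\sigma + \langle\xi,\eta\rangle_{\theta,\mu}+\tfrac12\|\eta\|_\mu^2$ with $\tfrac12\int|x-y|^2\,d\sigma > \Phi(\mu)$, and $\xi$ would also fail to lie in $\Tan^K_\mu\mathcal P_2(\R^d)$.
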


We remark that a complete characterization of the elements in the superdifferential of the Wasserstein distance squared is given in \cite[Theorem. ~5.1.19]{aussedat2025optimal}.

\subsection{Main Hypotheses}

In what follows, and throughout the paper, we suppose the following

\begin{assumption}\label{a: DataHyp}
We assume that the Hamiltonian $H : \mathbb{R}^d \times \mathbb{R}^d \times \mathcal{P}_2(\mathbb{R}^d) \to \mathbb{R}$ satisfies the following properties:
\begin{itemize}
    \item[a)]  $H$ is globally continuous;

    \item[b)] there exists a constant $C>0$ such that, for all $p,q,x,y \in \mathbb{R}^d$ and $\mu,\nu \in \mathcal{P}_2(\mathbb{R}^d)$,
    \[
        |H(x,p,\mu)-H(y,q,\nu)|
        \le C \bigl(1 + |p| + |q|\bigr)
        \bigl( W_2(\mu,\nu) + |x-y| + |p-q| \bigr);
    \]

    \item[c)] there exists $C>0$
\begin{equation*}
|H(x,p,\mu)|\leq C(1+ |x|^2+|p|^2+ \mathcal{M}_2(\mu)), \quad \forall (x,p,\mu) \in \R^d\times \R^d \times \mathcal{P}_2(\R^d).
\end{equation*}
\end{itemize}

The terminal cost $\mathcal{G} : \mathcal{P}_2(\mathbb{R}^d) \to \mathbb{R}$ is assumed to be bounded and $W_1$--Lipschitz continuous.

The diffusion term in \eqref{HJBId} is of the form $a(\mu,x)=\varepsilon \text{Id}_{d\times d}$.

\end{assumption}

\subsection{Weak notion of convergence}\label{sectionweakconvergence}

The space $(\mathcal{P}_2(\mathbb{R}^d),W_2)$ is not locally compact, since the
underlying space $\mathbb{R}^d$ is non-compact
(see \cite[Rmk.~7.1.9]{AGS}).
This creates difficulties in the application of the doubling variable
technique, where one needs to extract a maximizing sequence; see
Lemma \ref{lem: comparison}.
For this reason, we introduce suitable continuity assumptions on the
functions under consideration.

\begin{defn}\label{weakconvergence}
We say that a sequence $(\mu_n)_n \subset \mathcal{P}_2(\mathbb{R}^d)$
\emph{weakly converges in $\mathcal{P}_2(\mathbb{R}^d)$} to
$\mu \in \mathcal{P}(\mathbb{R}^d)$ if $\mu_n$ converges narrowly to $\mu$
and the second moments are uniformly bounded, namely
\[
\sup_{n}\mathcal{M}_2(\mu_n)<\infty.
\]
\end{defn}

Note that, by the lower semicontinuity of the second moment with respect to narrow convergence,
we have
\begin{equation}\label{e:l.s.c.moments}
\mathcal{M}_2(\mu)\leq \liminf_{n\to\infty}\mathcal{M}_2(\mu_n)<\infty,
\end{equation}
for every sequence $(\mu_n)_n$ converging to $\mu$ in this topology.
In particular, this ensures that $\mu\in\mathcal{P}_2(\mathbb{R}^d)$.
In general, the inequality in~\eqref{e:l.s.c.moments} is strict; when
equality holds, one has $W_2(\mu_n,\mu)\to 0$ as $n\to \infty$, see
\cite[Theorem~6.9]{OTVillani}.

By the Prokorov Theorem \cite[Theorem ~5.1.3]{AGS}, the space $\mathcal{P}_2(\mathbb{R}^d)$,
endowed with this topology, is locally compact.\footnote{This follows from the tightness estimate
\[
\mu_n\bigl(\mathbb{R}^d \setminus B_R(0)\bigr)
\leq \frac{1}{R^2}\,\mathcal{M}_2(\mu_n),
\qquad \forall\, R>0.
\]
 } 
\begin{defn}
We say that a function
$U:\mathcal{P}_2(\mathbb{R}^d)\to\mathbb{R}\cup\{+\infty\}$ is
\emph{weakly lower semicontinuous in $\mathcal{P}_2(\mathbb{R}^d)$} if it
is lower semicontinuous with respect to the topology defined by the weak convergence in Definition \ref{weakconvergence}.
\end{defn}

We also observe that any function which is lower semicontinuous in
$\mathcal{P}_p(\mathbb{R}^d)$ for some $1\leq p<2$ is weakly lower
semicontinuous in $\mathcal{P}_2(\mathbb{R}^d)$.

Examples of functions that are lower semicontinuous with respect to this
notion of convergence include the Wasserstein distance
(see \cite[Prop.~7.1.3]{AGS}) and the entropy functional
(see \cite[Theorem~5.26]{OTVillani}, where lower semicontinuity follows
directly from its variational representation).

We conclude this subsection by emphasizing that, when one works on
$\mathcal{P}_2(\mathbb{T}^d)$, weak convergence and $W_2$ convergence
coincide.

\subsection{Definitions of viscosity solutions.}
\subsubsection{First order HJE}

In order to deal with the first-order HJE,  the author of \cite{BSOT}  considers the following relaxed Hamiltonian $\mathcal{H} \colon \mathcal{P}_2(\R^{2d})\to \R$ 
\begin{equation} \label{extendendHamiltonian}
 {\mathcal{H}}(\gamma):= \int_{\R^{2d}} H(x,z,{\pi_1}_{\#}\gamma)d\gamma(x,z). 
\end{equation}
This relaxation is reminiscent of the Kantorovich relaxation in the classical Optimal Transport theory and is the main ingredient of the comparison principle in \cite[Theorem 2.11]{BSOT}. We stress that the relaxed Hamiltonian coincides with the usual one at elements induced by maps, and, by the growth condition on $H$, 
\begin{equation}\label{growthH}
|\mathcal{H}(\gamma)|\leq C(1+\|\gamma\|^2_{(\pi_1)_{\#}\gamma}+ \mathcal{M}_2((\pi_1)_{\#}\gamma)), \quad \forall \gamma \in \mathcal{P}_2(\R^{2d}).\end{equation}

We are ready to give the notion of viscosity solutions for \eqref{HJBDet} by means of the differential defined in \eqref{e: differentialdefinitivo}, whose extension to the time dependent case is obvious.

\begin{defn}\label{D:solndet}
   We say that $U : [0,T] \times \mathcal{P}_2(\R^d) \rightarrow \R$ is a \emph{viscosity subsolution of} \eqref{HJBDet} 
	whenever for all $t \in [0,T), \mu\in \mathcal P_2(\R^d)$ and $(r,\gamma) \in  \del^+ U(t,\mu)$, we have
\begin{align*}
	-r + {\mathcal{H}}(\gamma) \leq 0.
\end{align*}
We say that $V : [0,T) \times \mathcal{P}_2(\R^d) \rightarrow \R$ is a \emph{viscosity supersolution of} \eqref{HJBDet} whenever for all $t \in [0,T], \mu\in \mathcal P_2(\R^d)$ and $(r,\gamma) \in  \del^-  V(t,\mu)$, we have
\begin{align*}
	-r + {\mathcal{H}}(\gamma) \geq 0.
\end{align*}
 We say that $U$ is a \emph{viscosity solution} of \eqref{HJBDet} if it is a viscosity subsolution and a viscosity supersolution.

 We say that $U: [0,T]\times \mathcal P_2(\R^d)\to \R$ is a classical sub(super)solution of \eqref{HJBDet} if $U$ is fully $C^1$ and 
 $$
 -\partial_tU(t,\mu)+ \int_{\R^d}H(x, \partial_\mu U, \mu)d\mu(x)\leq (\geq)0, \quad  \forall (t,\mu)\in [0,T)\times \mathcal{P}_2(\R^d).
 $$ 
\end{defn}

\begin{rmk}
We recall that the notion of viscosity solution introduced in \cite[Definition~2.8]{BSOT} is obtained from Definition~\ref{D:solndet} by replacing the differential $\partial$ with the larger set $\partial_{S,I}$.  
We emphasize that the notion of viscosity solution adopted in Definition~\ref{D:solndet} is \emph{weaker} than the one considered in \cite{BSOT}. Indeed, any subsolution (resp.\ supersolution) in the sense of \cite[Definition~2.8]{BSOT} is also a subsolution (resp.\ supersolution) in the sense of Definition~\ref{D:solndet}.

On the other hand, existence results for our notion of solution can be obtained more easily, since the definition of viscosity solution only needs to be tested against a smaller class of differentials.

Moreover, the two notions can be compared by means of the comparison
principle introduced in~\cite{BSOT}. In particular, one can show that
they coincide when the underlying space is the flat torus
$\mathbb{T}^d$, provided that existence holds for both notions of
solution. This equivalence is a direct consequence of
Proposition~\ref{prop:super}. An extension of this result to the whole
space $\mathbb{R}^d$ can be obtained by additionally assuming that both
solutions satisfy a weak continuity property. This assumption ensures
the existence of a maximum in the doubling variables argument.

\end{rmk}

\subsubsection{Semilinear HJE with idiosyncratic noise}

We start by recalling, for the sake of completeness, the definition of Entropy functional and Fisher information. We refer the reader to \cite{AGS} and \cite{OTVillani} for more details. 

The \emph{entropy functional} $\mathcal{E} \colon \mathcal P_2(\R^d) \to (-\infty, +\infty]$ is defined as
\begin{equation*}
\mathcal{E}(\mu):= 
\begin{cases}
    \int_\Rd  \log \mu(x)  
 d\mu(x), \quad &\text{if } \mu \in \mathcal{P}_{2,ac}(\R^d) \\
    +\infty \quad &\text{otherwise}.
\end{cases}
\end{equation*}
Here, and throughout the paper, we adopt the usual abuse of notation by denoting with $\mu$ also the density of the measure $\mu$ with respect to the Lebesgue measure $\lambda_d$, whenever it exists.
Having formally,  $\partial_\mu \mathcal{E}(\mu)=\nabla \mathcal D_\mu \mathcal{E}(\mu)= \nabla \log\mu(x)$  the \emph{Fisher information} $\mathcal{I} \colon \mathcal P_2(\R^d) \to [0,+\infty]$ is defined by 
\begin{equation*}
\mathcal{I}(\mu):= \begin{cases}
     \int_{\R^d}  \vert \nabla \log \mu (x) \vert^2 d\mu(x)= \int_{\R^d} \frac{\vert \nabla \mu (x)\vert^2}{\mu(x)}dx \quad &\text{if } \mu \in \mathcal{P}_{2,ac}(\R^d), \quad \frac{\vert \nabla \mu (x)\vert^2}{\mu(x)}\in L^1(\R^d) \\
    +\infty \quad &\text{otherwise}.
\end{cases}
\end{equation*}
For future use, we recall that 
\begin{equation}\label{eq: positivityEntropy}
    \mathcal E^{*}(\mu):=\mathcal E (\mu) + \pi \mathcal M_2 (\mu) \geq 0,
\end{equation}
for $\mu\in \mathcal{P}_2(\R^d)$, see \cite[Equation ~(2.5)]{DS}.

 For the definition of viscosity solution of \eqref{HJBId}, we follow  \cite{DS}. Note that the authors introduce  an entropic penalization that, thanks to the semilinear structure,  can be absorbed into the equation, up to a constant depending only on the magnitude of the penalization itself and the diffusion matrix.  
 
\begin{defn}[See {\cite[Definition~2.10]{DS}}]\label{D:soln} 
   We say that $U : [0,T] \times \mathcal{P}_2(\R^d) \rightarrow \R$ is a \emph{viscosity subsolution of} \eqref{HJBId} 	if  there is a constant $K>0$, depending only on $U,H$ and the diffusion $a$, such that, for every $\delta >0$,
%	$U(T,.) \leq \mathcal{G}$ (resp. $U(T,\cdot) \geq \mcl G$) and, 
whenever $t \in [0,T], \mu\in \mathcal P_2(\R^d)$, $\mathcal{I}(\mu)<\infty$, and $(r,p) \in \del^+ \left(U - \delta \mathcal{E} \right) (t,\mu)$, we have
\begin{align*}
	-r + \int_{\R^d} \langle \div_x a(\mu,x) + a(\mu,x)  \nabla \log \mu(x) , p(x)\rangle d\mu(x)  + \int_{\Rd} H \left(x, p(x), \mu \right)d\mu(x) \leq K \delta.
\end{align*}

We say that $V: [0,T] \times \mathcal{P}_2(\R^d) \rightarrow \R$ is a \emph{viscosity supersolution of} \eqref{HJBId}  if  there is a constant $K>0$, depending only on $V,H$ and the diffusion $a$, such that, for every $\delta >0$, whenever $t \in [0,T], \mu\in \mathcal P_2(\R^d)$, $\mathcal{I}(\mu)<\infty$, and $(r,p) \in \del^-\left( V + \delta \mathcal{E} \right) (t,\mu)$, we have
\begin{align*}
	-r + \int_{\R^d} \langle \div_x a(\mu,x) + a(\mu,x)\nabla \log \mu(x) , p(x) \rangle  d\mu(x) + \int_{\Rd} H \left(x, p(x), \mu \right)d\mu(x) \geq -K\delta.
\end{align*}
We say that $U$ is a \emph{viscosity solution} of \eqref{HJBId} if it is both a viscosity subsolution and a viscosity supersolution.

We say that $U: [0,T]\times \mathcal P_2(\R^d)\to \R$ is a classical sub(super)solution of \eqref{HJBId} if $U$ is partially $C^2$ and 
 $$
 -\partial_tU(t,\mu)-
\int_{\mathbb{R}^d} 
\mathrm{tr}\!\left[ a(\mu,x)\, \nabla \partial_\mu U \right] d\mu(x) +\int_{\R^d}H(x, \partial_\mu U, \mu)d\mu(x)\leq (\geq)0, \quad  \forall (t,\mu)\in [0,T)\times \mathcal{P}_2(\R^d).
 $$ 
 
\end{defn}
\begin{rmk}
In view of the computation done in  \cite[Section ~2.3, Eq, (2.23)]{DS}, the constant $K$ is expected to blow up as the diffusion goes to zero. We will take care of this fact in the proof of the vanishing viscosity limit. 
\end{rmk}

\section{Hopf-Lax formula}\label{s: Hopf}

The goal of this section is to establish an analogue of the finite-dimensional Hopf–Lax formula in the Wasserstein space. We show that this formula provides a representation of the viscosity solution to \eqref{HJBDet} in the sense of Definition~\ref{D:solndet}.
A first extension in this direction was obtained in \cite{GANGBO2019119} for the quadratic case, where the specific structure of the Hamiltonian is essential due to its relation with a projection operator. Here, we generalize their result beyond the quadratic setting, allowing for a convex Hamiltonian 
$H$ depending solely on the adjoint variable.
Moreover, in contrast with the assumptions in \cite[Section~5]{BSOT}, we do not require differentiability of the associated Lagrangian. In particular, the Hopf–Lax formula still yields a viscosity solution even when the regularity hypotheses ensuring existence in \cite{BSOT} are not satisfied.\\

Let us start by recalling the role of the Hopf-Lax formula in finite-dimensional spaces.   
We consider a  Lipschitz continuous datum $g:\R^d\to \R$ and a Hamiltonian $H:\R^d\to \R$ that depends only on the adjoint variable, convex, superlinear with at most quadratic growth, and has a minimum at $0$ (so that its Fenchel transform $L$ is such that $ L(0)=0$). 

Under these assumptions, it is well known that the function  $u: [0, T] \times \mathbb{R}^d \to \mathbb{R}$ defined by the Hopf-Lax formula: 
\[
u(t, x) = \inf_{y\in \R^d} \left\{ g(y) + (T-t)L\left(\frac{y - x}{T-t}\right) \right\}.
\]
is the viscosity solution 
of the HJE 
\[
\begin{cases}
-\partial_tw + H(\nabla w) = 0, \quad [0,T)\times\R^d \\
w(T, x) = g(x), \quad  x\in \R^d
\end{cases}.
\]
Note that in the representation formula the case $t=T$, gives $u(T,x)=g(x)$, by the superlinearity of the Lagrangian.

The above formula is formally derived using the method of characteristics. Its power also lies in the fact that it provides an alternative approach to obtain properties such as finite speed propagation of characteristics, Lipschitz continuity of the solution, and the propagation of semiconcavity provided the datum $g(\cdot)$ is semiconcave. Remarkably, in case the Hamiltonian $H$ is strictly convex, the semiconcavity of the solution at positive time is ensured even in the case of a non semiconcave datum. Let us underline that the key ingredient to prove that the function $u$, given by Hopf-Lax formula, is the viscosity solution of the HJE  is the semigroup structure given in the form of a Dynamic Programming Principle:
%\begin{equation}
%u(t,x)=\min_{y \in \R^d} \left\{(t-s)L\left(\frac{x-y}{t-s}\right) +u(y,s)\right\}, \quad \forall\ 0\leq s<t.
%\end{equation}
\begin{equation} \label{hopflaxmetric}
    u(t,x)=\min_{y \in \R^d} \left\{(s-t)L\left(\frac{y-x}{s-t}\right) +u(s,y)\right\}, \quad \forall\ 0\leq t<s\leq T.
\end{equation}
The aforementioned properties are standards for finite dimensional Euclidean spaces, see \cite[Section 3.3]{Evans}, and can be classically proven also for Hilbert spaces, see \cite{lasry_remark_1986} and references therein.

In this section, we make the following assumptions focusing on the Lagrangian, rather than on the Hamiltonian:
\newcounter{hopf}
\renewcommand{\thehopf}{\textup{Hopf\Alph{hopf}}}

\begin{itemize}
\refstepcounter{hopf}
\item[\thehopf] \label{item:HopfG}
The function $\mathcal{G}: \mathcal{P}_2(\R^d) \to \R$ is Lipschitz continuous.
\refstepcounter{hopf}
\item[\thehopf] \label{item:HopfL}
The Lagrangian $L: \R^{d} \to \R$ is convex, superlinear, and has at most quadratic growth, with $L(0)=0$.
Moreover, we assume the following coercivity condition: there exist constants $C, c > 0$ such that 
\[
L(z) \geq c |z|^2 - C \quad \forall\, z \in \R^d.
\]
\end{itemize}

\subsection{The relaxed Lagrangian as the Fenchel transform of the relaxed Hamiltonian }

We start by considering a relaxation of the  Lagrangian $\mathcal{L} \colon \mathcal{P}_2(\R^{2d}) \to \R$, defined by
\begin{equation}\label{def: mathcal L}
    \mathcal{L}(\gamma):=\int_{\R^{2d}} L\left(z\right) \gamma(dx,dz).
\end{equation}

We observe that the following property holds
\begin{equation}\label{coercivity Lagrangian MF}
\mathcal{L}(\gamma) \geq c \|\gamma\|^2_{(\pi_1)_\# \gamma}-C,
\end{equation}
for $C,c>0$ the same constant of the coercivity condition of the Lagrangian $L$.

The link between the relaxed Lagrangian and the relaxed Hamiltonian is explained by the following

\begin{lem}[Fenchel Transform of the Relaxed Hamiltonian]\label{l:Fenchel}
Let $\mathcal{H}:\mathcal{P}_{2}(\R^{2d})\to \R$ be the relaxed Hamiltonian associated to $H(p)=\max_{v\in \R^d}\big\{\big<-v,p\big>-L(v)\big\}$, i.e. $L^*=H$, defined in \eqref{extendendHamiltonian}. Fix $\mu \in \mathcal{P}_{2}(\R^d)$. Then, the equalities
\begin{align}\label{Fenchelduality}
    \mathcal{H}(\gamma)
    &=\max_{\xi \in \mathcal{P}_{2,\mu}(\R^{2d}), \theta \in \Gamma_{\mu}(\gamma,\xi)}\Big\{\int _{\R^{3d}}\big(\big<-v,p\big>-L(v)\big)d\theta(x,p,v)\Big\} \nonumber\\
    &=\max_{\xi \in \mathcal{P}_{2,\mu}(\R^{2d}),\theta \in \Gamma_{\mu}(\gamma,\xi)}\Big\{\big<-\xi,\gamma\big>_{\theta,\mu}-\mathcal{L}(\xi)\Big\} \nonumber\\
    &=\max_{\xi \in \mathcal{P}_{2,\mu}(\R^{2d})}\Big\{\big<-\xi,\gamma\big>_{\mu}-\mathcal{L}(\xi)\Big\}
\end{align}
hold for every choice of $\gamma \in \mathcal{P}_{2,\mu}(\R^{2d})$.

Moreover, for $\gamma \in \mathcal{P}_{2,\mu}(\R^{2d})$ 
\begin{equation}\label{Fenchel}
    \mathcal{H}(\gamma)+ \mathcal{L}(\xi)=\big<-\xi,\gamma\big>_{\mu} \Longleftrightarrow \xi=(\pi_1,-D_pH(\pi_2))_{\#}\gamma.
\end{equation}
In particular, 
\begin{equation}\label{Fenchelminus}
 \mathcal{H}(\gamma)=\max_{\xi \in \mathcal{P}_{2,\mu}(\R^{2d})}\Big\{-\big<\xi,\gamma\big>_{\mu}-\mathcal{L}(\xi)\Big\}.
\end{equation}
If, in addition, 
\begin{equation}\label{decreasing Fenchel}
\mathcal{L}(\xi)\geq \mathcal{L}(\mathrm{P}(\xi)), \quad \forall \xi \in \mathcal{P}_2(\R^{2d}),
\end{equation}
then, for all $ \gamma \in \Tan^K_{\mu}\mathcal{P}_2(\R^d)$,
\begin{equation}\label{FenchelTan}
\mathcal{H}(\gamma)= \max_{\xi \in \Tan^K_{\mu}\mathcal{P}_2(\mathbb{R}^d)}
\Big\{\langle -\xi,\gamma\rangle_{\mu}-\mathcal{L}(\xi)\Big\}=\max_{\xi \in \Tan^K_{\mu}\mathcal{P}_2(\mathbb{R}^d)}
\Big\{-\langle \xi,\gamma\rangle_{\mu}-\mathcal{L}(\xi)\Big\},
\end{equation}
In particular, any optimizer $\xi \in \mathcal{P}_{2,\mu}(\R^{2d})$ in \eqref{Fenchel}, with $\gamma \in\Tan^K_{\mu}\mathcal{P}_2(\R^d)$, is s.t. $\xi=\mathrm{P}(\xi)$, i.e. $\xi \in \Tan^K_{\mu}\mathcal{P}_2(\R^d)$ is the only maximizer in \eqref{FenchelTan}. 
\end{lem}
\begin{proof}
Let us prove the first equality, the second and the third one being immediate from this one.

Fix $\mu\in \mathcal P_2(\R^d)$ and let $\gamma \in \mathcal{P}_{2,\mu}(\R^{2d})$.
By the very definition of relaxed Hamiltonian \eqref{extendendHamiltonian} and Fenchel transform 
we have
\begin{align*}
    \mathcal{H}(\gamma)=\int_{\R^{2d}} H(p)d\gamma(x,p) &\underbrace{=}_{\theta \in \Gamma_{\mu}(\gamma,\xi)}\int_{\R^{3d}}H(p)d\theta(x,p,v)  \\
    & \underbrace{\geq}_{L^*=H } \int_{\R^{3d}} \big( \big<-v,p \big>-L(v)\big)d\theta(x,p,v), \quad \xi \in \mathcal{P}_{2,\mu}(\R^{2d}), \, \theta \in \Gamma_{\mu}(\gamma,\xi).
\end{align*}
Equivalently,  $$\mathcal{H}(\gamma)
    \geq \sup_{\xi \in \mathcal{P}_{2,\mu}(\R^{2d}), \theta \in \Gamma_{\mu}(\gamma,\xi)}\Big\{\int_{\R^{3d}}\big(\big<-v,p\big>-L(v)\big)d\theta(x,p,v)\Big\}.$$
    The equality holds iff $\theta$ is concentrated in {$\Gamma_{H}=\big\{(x,p,v)\in \R^{3d} : v = -D_p H(p)\big\}$}, where we denoted by $D_pH$ the differential of $H=L^*$, that is   $C^1$ because $L$ is strictly convex and superlinear.
 This implies that $\xi \in \mathcal{P}_{2,\mu}(\R^{2d}),\theta\in \Gamma_{\mu}(\gamma,\xi)$ optimize the first equality iff $\xi=(\pi_1,-D_pH(\pi_2))_{\#}\gamma$, and $\theta=(\pi_1, \pi_2,-D_pH(\pi_2))_{\#}\gamma$. This gives the assertion \eqref{Fenchel}. In particular, we also derive that for such a $\xi$
\begin{align*}
 \left< -\gamma, \xi \right>_{\mu}&=\max_{\theta \in \Gamma_{\mu}(\gamma,\xi)} \left< -\gamma, \xi \right>_{\mu, \theta}\underbrace{=}_{\Gamma_{\mu}(\gamma,\xi)=\{(\pi_1, \pi_2, -D_pH(\pi_2))_{\#}\gamma\}} \int_{\R^{2d}} \left<p, D_pH(p)\right>d\gamma\\
 &=-\int_{\R^{2d}} \left<p,- D_pH(p)\right>d\gamma=-\left<\gamma, \xi \right>_{\mu},
\end{align*}
that gives \eqref{Fenchelminus}.

Now, suppose the decreasing property \eqref{decreasing Fenchel}. For $\gamma \in \Tan^K_{\mu}\mathcal{P}_{2}(\R^d)$, we have
\begin{align*}
    \mathcal{H}(\gamma)&=\max_{\xi \in \mathcal{P}_{2,\mu}(\R^{2d})} \left\{ \left< -\gamma, \xi \right>_{\mu}- \mathcal{L}(\xi) \right\} \nonumber\\
&\underbrace{\leq}_{\eqref{decreasing Fenchel}, -\gamma \in \Tan^{K}_{\mu}\mathcal{P}_2(\R^d), \eqref{projectionOperator}} \sup_{\xi \in \mathcal{P}_{2,\mu}(\R^{2d})} \left\{ \left< -\gamma, \text{P}(\xi) \right>_{\mu}- \mathcal{L}(\text{P}(\xi)) \right\} \nonumber\\
    &=\sup_{\xi \in \Tan^K_{\mu}\mathcal{P}_2(\R^d)} \left\{ \left< -\gamma, \xi \right>_{\mu}- \mathcal{L}(\xi) \right\}.
\end{align*}
Moreover, if $\xi$ is an optimizer of \eqref{Fenchelduality} then $\text{P}(\xi)$ is an optimizer of \eqref{Fenchelduality} :
\begin{align*}
    \mathcal{H}(\gamma)+\mathcal{L}(\text{P}(\xi))&\underbrace{\leq}_{\eqref{decreasing Fenchel}} \mathcal{H}(\gamma)+\mathcal{L}(\xi)\underbrace{=}_{\xi  \text{ optimal}}\left<\xi,-\gamma \right>_{\mu}\\
    &\underbrace{=}_{\gamma \in \Tan^K_{\mu}\mathcal{P}_2(\R^d)} \left<\text{P}(\xi),-\gamma \right>_{\mu}\leq \mathcal{H}(\gamma)+\mathcal{L}(\text{P}(\xi)).
\end{align*}
This implies  $\mathcal{H}(\gamma)+\mathcal{L}(\text{P}(\xi))=\left<\text{P}(\xi),-\gamma \right>_{\mu}$, i.e. $\text{P}(\xi)$ is an optimizer, and $\mathcal{L}(\text{P}(\xi))=\mathcal{L}(\xi)$. By the characterization of optimizers $\xi=\text{P}(\xi)=(\pi_1,-D_pH(\pi_2))_{\#}\gamma$ and 
$$
\mathcal{H}(\gamma)=\max_{\xi \in \Tan^K_{\mu}\mathcal{P}_2(\R^d)}\left\{ \left< -\gamma, \xi \right>_{\mu}- \mathcal{L}(\xi) \right\}.
$$
\end{proof}

\subsection{Hopf-Lax Formula, Dynamic Programming Principle, regularity and infinitesimal behavior of optimizers}
For the sake of presentation, we derive the Hopf-Lax formula from the associated control problem. Consider the value function introduced in \cite{BSOT}

\begin{equation*} \label{valuefdet}
    U_K(t, \mu):= \inf_{\zeta_\cdot\in \mathcal{A}_{[t,T]}(\mu)} {\int_{t}^T\mathcal{L}(\zeta_s)ds  +\mathcal G((\pi_1)_{\#}\zeta_T)},
\end{equation*}
where for $t<T$ and $\mu \in \mathcal{P}_2(\R^d)$, $$\mathcal{A}_{[t,T]}(\mu):=\Big\{\zeta_{\cdot}\in C([t,T]; \mathcal{P}_2(\R^{2d})) : \partial_s \mu_s + \text{div}(\text{Pr}(\zeta_s)\mu_s)=0, \, \mu_s=(\pi_1)_{\#}\zeta_s \,\,\, \forall s \in [t,T],\, \mu_t=\mu \Big\}.$$

Let us show that this value function coincides with the classical value function $U_M$ defined as
$$
U_M(t,\mu):=\inf_{(\mu_{\cdot},\alpha_{\cdot}) \in \mathcal{A}^M_{[t,T]}(\mu)} {\int_{t}^T\int_{\R^d}L(\alpha_s(x))d\mu_s(x)ds  +\mathcal G(\mu_T)},
$$
where for $t<T$ and $\mu \in \mathcal{P}_2(\R^d)$, $$\mathcal{A}^M_{[t,T]}(\mu):=\Big\{(\mu_{\cdot},\alpha_{\cdot})\in C([t,T]; \mathcal{P}_2(\R^{2d})\times L^2_{\mu_{\cdot}}(\R^d;\R^d)) : \partial_s \mu_s + \text{div}(\alpha_s\mu_s)=0  \,\,\, \forall s \in [t,T], \, \mu_t=\mu\Big\}.$$
Indeed, we note that 
$$
U_{K}(t,\mu)\leq \inf_{\big\{\zeta_\cdot\in \mathcal{A}_{[t,T]}(\mu) \,:\, \zeta_s=\mu_s\otimes \delta_{\alpha_s} \big\}} {\int_{t}^T\mathcal{L}(\zeta_s)ds  +\mathcal G((\pi_1)_{\#}\zeta_T)}=U_M(t,\mu).
$$

On the other hand, by Jensen inequality $\mathcal{L}(\zeta)\geq \mathcal{L}((\Id \times \Pr(\zeta))_\# ((\pi_1)_\# \zeta)), \, \forall \zeta\in \mathcal{P}_{2}(\R^{2d})$.  Therefore, $\forall (t,\mu)\in[0,T]\times \mathcal{P}_2(\R^d)$

$$
U_{K}(t,\mu)\geq \inf_{\zeta_\cdot\in \mathcal{A}_{[t,T]}(\mu)} {\int_{t}^T\mathcal{L}((\Id \times \Pr(\zeta_s))_{\#} \mu_s)ds  +\mathcal G((\pi_1)_{\#}\zeta_T)}\geq U_{M}(t,\mu). 
$$
Therefore, $U_M=U_K$.
 
In view of the previous computations, the finite dimensional Hopf-Lax formula \eqref{hopflaxmetric} can naturally be generalized in the following way. 
\begin{defn}[Hopf-Lax formula]\label{def: Hopflax} Fix $\mathcal{G}\colon \mathcal{P}_2(\R^d)\to \R$, l.s.c. and bounded from below. We define the function 
\begin{align}\label{eq:hopf-lax1}
    \mathcal{V}(t,\mu):&=
    \inf_{\gamma \in \mathcal{P}_{2,\mu}(\R^{2d})}\left\{ \mathcal{G}((\pi_2)_{\#}(\gamma)) + (T-t)\mathcal{L}\left(\frac{1}{T-t}\cdot(\pi_1,\pi_2-\pi_1)_{\#}\gamma\right) \right\} \notag \\
\end{align}
where  $t\in [0,T], \,  \mu\in \mathcal P_2(\R^d)$, as the Hopf-Lax evolution of $\mathcal{G}$.
\end{defn}

\begin{rmk} \label{Existencemin}
Firslty, we observe that \eqref{eq:hopf-lax1} can be equivalently rewritten as 
 $$\mathcal{V}(t,\mu)=\inf_{\gamma \in \mathcal{P}_{2,\mu}(\R^{2d})}\left\{ \mathcal{G}(\exp_{\mu}(\gamma)) + (T-t)\mathcal{L}\left(\frac{1}{T-t}\cdot\gamma\right) \right\}.
 $$
 
Then, as in finite dimension, we also remark that thanks to the coercivity property \eqref{coercivity Lagrangian MF}, when $t = T$, we have
\begin{equation*}
\lim_{t \to T^{-}}(T-t)\mathcal{L}\left(\frac{1}{T-t}\cdot \gamma\right)=\begin{cases}
    0 \quad &\text{if }\gamma=\mu \otimes \delta_{0} \\
    + \infty \quad &\text{otherwise}
\end{cases}.
\end{equation*}
Therefore, $\mathcal{V}(T,\mu)=\mathcal G (\exp_\mu (\mu \otimes \delta_{0}))=\mathcal{G}(\mu)$.

Moreover, since $\mathcal{G}$ is bounded from below, $\mathcal{V}> -\infty$.  It is also immediate to see that the minimization can be split to obtain 
\begin{equation*}
    \mathcal{V}(t,\mu)= \inf_{\nu \in \mathcal{P}_2(\R^d) }\left\{ \mathcal{G}(\nu) + (T-t)\mathcal{L}_{T-t}(\mu,\nu) \right\},
\end{equation*}
where $\mathcal{L}_{T-t}(\mu,\nu):=\min_{\gamma \in \Gamma(\mu,\nu)}\Big\{\int_{\R^{2d}} L\left(\frac{y-x}{T-t}\right)d\gamma(x,y)\Big\}.$ 
\end{rmk}

Let us now prove that the function $\mathcal V$ defined in \eqref{eq:hopf-lax1} satisfies a semigroup property.

\begin{prop} [Dynamic Programming Principle] \label{DPP}
Let $L : \R^d \to \R$ satisfy \eqref{item:HopfL}. Then
\begin{align} \label{semigroup Hopf-Lax}
    \mathcal{V}(t,\mu)=
    &\inf_{\gamma \in  \mathcal P_{2,\mu}(\R^{2d}) } \left\{ \mathcal{V}(s, \exp_{\mu}(\gamma)) + (s-t)\mathcal{L}\left(\frac{1}{s-t}\cdot \gamma\right) \right\}, \quad \forall 
    \,0\leq t<s<T.
\end{align}
\end{prop}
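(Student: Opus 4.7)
The plan is to prove both inequalities in \eqref{semigroup Hopf-Lax} by standard Hopf-Lax splitting/gluing, where the key analytic ingredient is the convexity of $L$ in the relaxed form \eqref{def: mathcal L} and the geometric ingredient is the disintegration and gluing of plans in $\mathcal{P}_2(\R^{2d})$.

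\textbf{Inequality $\mathcal V(t,\mu)\le\text{RHS}$.} Fix an arbitrary $\gamma_1\in\mathcal{P}_{2,\mu}(\R^{2d})$ and set $\nu:=\exp_\mu(\gamma_1)$. For $\varepsilon>0$, choose, by definition of $\mathcal V(s,\nu)$, an $\varepsilon$-optimal $\eta\in\mathcal{P}_{2,\nu}(\R^{2d})$. Disintegrate $\gamma_1=(\gamma_1)_y(dx,dv_1)\,\nu(dy)$ with respect to the map $(x,v_1)\mapsto x+v_1$ (whose push-forward is $\nu$) and $\eta=\eta_y(dv_2)\,\nu(dy)$ with respect to its first marginal $\nu$. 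Define the three-variable plan
\begin{equation*}
\theta(dx,dv_1,dv_2):=\eta_{x+v_1}(dv_2)\,\gamma_1(dx,dv_1)\in\mathcal{P}(\R^{3d}).
\end{equation*}
By construction $(\pi_1,\pi_2)_\#\theta=\gamma_1$ and $(\pi_1+\pi_2,\pi_3)_\#\theta=\eta$. Set $\gamma:=(\pi_1,\pi_2+\pi_3)_\#\theta\in\mathcal{P}_{2,\mu}(\R^{2d})$ so that $\exp_\mu(\gamma)=\exp_\nu(\eta)$. Writing $\tfrac{v_1+v_2}{T-t}=\tfrac{s-t}{T-t}\cdot\tfrac{v_1}{s-t}+\tfrac{T-s}{T-t}\cdot\tfrac{v_2}{T-s}$ and invoking convexity of $L$, Jensen's inequality gives
\begin{equation*}
(T-t)\mathcal L\!\left(\tfrac{1}{T-t}\cdot\gamma\right)\le(s-t)\mathcal L\!\left(\tfrac{1}{s-t}\cdot\gamma_1\right)+(T-s)\mathcal L\!\left(\tfrac{1}{T-s}\cdot\eta\right).
\end{equation*}
Plugging this into the Hopf-Lax formula at $(t,\mu)$ evaluated at $\gamma$, using $\exp_\mu(\gamma)=\exp_\nu(\eta)$, the $\varepsilon$-optimality of $\eta$, and then taking the infimum over $\gamma_1$, yields the desired inequality.

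\textbf{Inequality $\mathcal V(t,\mu)\ge\text{RHS}$.} Fix $\varepsilon>0$ and let $\gamma\in\mathcal{P}_{2,\mu}(\R^{2d})$ be $\varepsilon$-optimal in \eqref{eq:hopf-lax1} for $\mathcal V(t,\mu)$. Split the displacement in time by setting
\begin{equation*}
\gamma_1:=\tfrac{s-t}{T-t}\cdot\gamma,\qquad \nu:=\exp_\mu(\gamma_1),\qquad \eta:=\bigl(\pi_1+\tfrac{s-t}{T-t}\pi_2,\;\tfrac{T-s}{T-t}\pi_2\bigr)_\#\gamma.
\end{equation*}
Then $(\pi_1)_\#\eta=\nu$ and $\exp_\nu(\eta)=(\pi_1+\pi_2)_\#\gamma=\exp_\mu(\gamma)$. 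A direct change of variables gives the \emph{equality} (no convexity needed here, just homogeneity in time)
\begin{equation*}
(s-t)\mathcal L\!\left(\tfrac{1}{s-t}\cdot\gamma_1\right)+(T-s)\mathcal L\!\left(\tfrac{1}{T-s}\cdot\eta\right)=(T-t)\mathcal L\!\left(\tfrac{1}{T-t}\cdot\gamma\right).
\end{equation*}
Using $\gamma_1$ and $\eta$ as admissible competitors in the RHS of \eqref{semigroup Hopf-Lax} and in the Hopf-Lax formula defining $\mathcal V(s,\nu)$, respectively, one bounds the RHS by $\mathcal G(\exp_\mu(\gamma))+(T-t)\mathcal L(\tfrac{1}{T-t}\cdot\gamma)\le\mathcal V(t,\mu)+\varepsilon$, and sending $\varepsilon\to 0$ concludes.

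\textbf{Main obstacle.} The only technical point is the measurability of the gluing $\theta$ in the first inequality, which follows from the standard disintegration theorem (\cite[Theorem~5.3.1]{AGS}) applied to $\gamma_1$ through $(x,v_1)\mapsto x+v_1$ and to $\eta$ through its first marginal; once $\theta$ is well defined, all the estimates are a direct consequence of the convexity and the superlinearity (and hence finiteness of the involved integrals through \eqref{coercivity Lagrangian MF}) of $L$. The reverse inequality is purely algebraic and does not use convexity, since the two displacements are parallel (both proportional to $\pi_2$).
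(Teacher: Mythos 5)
Your proof is correct and takes essentially the same approach as the paper: both directions are proved by the standard Hopf--Lax gluing/splitting, with the composition of plans (via disintegration and AGS gluing) and the convexity of $L$ driving the $\leq$ inequality, and the exact time-homogeneity of $L$ along a fixed collinear displacement driving the $\geq$ inequality. The only superficial difference is that you work directly with velocity plans in $\mathcal{P}_{2,\mu}(\R^{2d})$ and write the gluing $\theta$ in the velocity variables $(x,v_1,v_2)$, whereas the paper writes everything in terms of transport plans $\gamma^{1\,2},\gamma^{2\,3},\gamma^{1\,3}$ between the endpoint measures and converts to velocities via the notation $\gamma_t=(\pi_1,\tfrac{\pi_2-\pi_1}{t})_\#\gamma$; the two are related by a change of variables and are mathematically identical.
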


\begin{proof}
Denote by $\mathcal{K}(t,\mu)$ the RHS of \eqref{semigroup Hopf-Lax}. In this proof, we use the subscript in a different way: $\gamma_t:=(\pi_1,\frac{\pi_2-\pi_1}{t})_{\#}\gamma,$ where $t>0$ and $\gamma \in \mathcal{P}_2(\R^{2d})$.
Then $\mathcal{K}(t,\mu)$ coincide with $$\inf_{\gamma \in  \mathcal P_{2,\mu}(\R^{2d})  } \left\{ \mathcal{V}(s, (\pi_2)_{\#}\gamma) + (s-t)\mathcal{L}(\gamma_{s-t}) \right\}, \quad \forall 
    \,0\leq t<s<T.$$
We then focus on proving the equality 
$$\mathcal V(t,
\mu)=\inf_{\gamma \in  \mathcal P_{2,\mu}(\R^{2d})  } \left\{ \mathcal{V}(s, (\pi_2)_{\#}\gamma) + (s-t)\mathcal{L}(\gamma_{s-t}) \right\} \quad \forall 
    \,0\leq t<s<T.$$

Fix $0\leq t<T$ and $\mu_1 \in \mathcal{P}_2(\R^d)$. For $t<s<T$ and $\gamma^{1 \,2}\in \mathcal{P}_{2,\mu_1}(\R^{2d})$, let us denote $\mu_2:=(\pi_2)_\# \gamma^{1\, 2}\in \mathcal P_2(\R^d)$ and use the formula of $\mathcal V$ given in Definition \ref{def: Hopflax}, so  that for 
$\varepsilon>0$, there exists $\gamma^{2\, 3}\in \Gamma(\mu_2,\mu_3)$ such that 
\begin{equation} \label{almost optimality Hopf-Lax}
    \mathcal{V}(s,\mu_2) \geq (T-s)\mathcal{L}\left(\gamma^{2\,3}_{T-s}\right)+\mathcal G(\mu_3) - \varepsilon,
\end{equation}
while, for every $\gamma\in \Gamma(\mu_1,\mu_3) $ it holds
 
\begin{equation}\label{eq:U1}
\mathcal{V}(t,\mu_1)\leq (T-t)\mathcal{L}(\gamma_{T-t})+\mathcal G(\mu_3).
\end{equation} 
Since $T>s>t$, we have that $\frac{T-s}{T-t} + \frac{s-t}{T-t}=1$ with addenda in $(0,1)$. 
Therefore, by the convexity of $L$, we obtain\begin{equation} \label{lagrangiana conv}
L\left(\frac{z-x}{T-t}\right)\leq \frac{s-t}{T-t}L\left(\frac{y-x}{s-t}\right)+\frac{T-s}{T-t}L\left(\frac{z-y}{T-s}\right).
\end{equation}
Now, let $\gamma^{1 \,3}\in \Gamma(\mu_1,\mu_3)$ be the composition of plans defined in \cite[ Remark 5.3.3]{AGS} as the plan given by $$\gamma^{1 \,3}:=\gamma^{2\,3}\circ\gamma^{1 \, 2}(dx,dz)=\int_{\R^d}\left(\gamma_{y}^{2\, 3}(dz) \times  \gamma_y^{1\,2}(dx) \right)\mu_2(dy) $$ where $\gamma^{1\,2}(dx,dy)=\gamma_y^{1\,2}(dx) \mu_2(dy)$ and $\gamma^{2\,3}(dy,dz)=\gamma_y^{2\,3}(dz) \mu_2(dy)$.
Then we can integrate \eqref{lagrangiana conv} with respect to $\gamma^{1 \,3}$ to obtain the following ``Triangular Inequality'' 
\begin{equation}\label{eq:triangular}
\mathcal{L}\left(\gamma^{1\,3}_{T-t}\right)\leq \frac{s-t}{T-t}\mathcal{L}\left(\gamma^{1 \, 2}_{s-t}\right)+\frac{T-s}{T-t}\mathcal{L}\left(\gamma^{2 \,3}_{T-s}\right).
\end{equation}
Thus combining \eqref{eq:U1} with \eqref{eq:triangular} and  using  \eqref{almost optimality Hopf-Lax}, we have
\begin{align*}
    \mathcal{V}(t,\mu_1)&\leq (s-t)\mathcal{L}\left(\gamma^{1 \, 2}_{s-t}\right)+(T-s)\mathcal{L}\left(\gamma^{2 \, 3}_{T-s}\right) +\mathcal G(\mu_3) \\
    &\leq(s-t)\mathcal{L}\left(\gamma^{1 \, 2}_{s-t}\right)+ \mathcal{V}(s, \mu_2) +\varepsilon
\end{align*}
By the arbitrariness of $\varepsilon$ we obtain $\mathcal{V}(t, \mu_1)\leq \mathcal K(t,\mu_1)$.

For the converse inequality we proceed as follows: fix $0\leq t<T$ and $\mu_1\in \mathcal P_2(\R^d)$ and 
let $\mu_3 \in\mathcal P_2(\R^d)$ and $\gamma^{1 \,3} \in \Gamma(\mu_1,\mu_3)$ be $\varepsilon$-optimizers for $\mathcal{V}(t,\mu_1)$.

For a fixed $t<s<T$, consider
$$\mu_2:=\left(\frac{T-s}{T-t}\pi_1+\left(1-\frac{T-s}{T-t}\right)\pi_2\right)_{\#}\gamma^{1 \, 3},$$
then, for $$\gamma^{2\,3}:=\left ( \frac{T-s}{T-t}\pi_1+\left(1-\frac{T-s}{T-t}\right)\pi_2, \pi_2\right )_\# \gamma^{1 \,3}\in \Gamma(\mu_2,\mu_3)$$ and $$\gamma^{1\,2}=: \left(\pi_1, \frac{T-s}{T-t}\pi_1+\left(1-\frac{T-s}{T-t}\right)\pi_2 \right)_\# \gamma^{1 \,3}\in \Gamma(\mu_1,\mu_2),$$  we have
\begin{align*}
   \mathcal K (t, \mu_1)\leq (s-t)\mathcal{L}\left(\gamma^{1 \, 2}_{s-t}\right)+\mathcal{V}(s,\mu_2)
   &\leq (s-t)\mathcal{L}\left(\gamma^{1 \,2}_{s-t}\right)+(T-s)\mathcal{L}\left(\gamma^{2 \, 3}_{T-s}\right) +\mathcal{G}(\mu_3)\\
   &=(T-t)\mathcal{L}\left(\gamma^{1 \, 3}_{T-t}\right) +\mathcal{G}(\mu_3)\leq \mathcal{V}(t,\mu_1) +\varepsilon,
\end{align*}
where we used the fact that $\mathcal L\left(\gamma^{1 \, 3}_{T-t}\right)=\mathcal L\left(\gamma^{2\, 3}_{T-s}\right)=\mathcal L\left(\gamma^{1\,2}_{s-t}\right)$.

Since $\varepsilon$ is arbitrary, we have the claim.

\end{proof}

\begin{lem}\label{LPHL}
 The function $\mathcal{V}$ defined in \eqref{eq:hopf-lax1} is Lipschitz continuous in $[0,\infty)\times \mathcal P_2(\R^d)$, provided that ${\mathrm{Lip}}(\mathcal{G}; W_2)< \infty$. More precisely, for all $0<t<T$
 \begin{equation*}
     \Lip(\mathcal{V}(t,\cdot); W_2)\leq \Lip(\mathcal{G};W_2) ,
\end{equation*}
and for all $\mu\in \mathcal P_2(\R^d)$
     \begin{equation*}
         \Lip_{t}(\mathcal{V}(\cdot,\mu)) \leq \sup \Big\{\mathcal{H}(\xi) \,|\, \xi \in \mathcal{P}_{2,\mu}(\R^{2d}), \|\xi\|_{\mu}\leq \Lip(\mathcal{G};W_2)  \Big\}.
 \end{equation*}

\end{lem}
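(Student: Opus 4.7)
The plan is to prove the spatial estimate by a direct gluing/transport construction, and then deduce the time estimate from the dynamic programming principle \eqref{semigroup Hopf-Lax} combined with the spatial one. A Fenchel-type identification at the end converts the resulting quantity into the form stated in the lemma.

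\medskip

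\emph{Step 1 (spatial estimate).} I would fix $\mu,\mu'\in\mathcal P_2(\R^d)$ and $\varepsilon>0$ and take $\nu\in\mathcal P_2(\R^d)$ together with an $\varepsilon$-optimizer $\gamma\in\Gamma(\mu,\nu)$ for $\mathcal V(t,\mu)$ in the formulation \eqref{eq:hopf-lax1}. Choosing $\sigma\in\Gamma_0(\mu,\mu')$ and gluing with $\gamma$ yields $\theta\in\mathcal P(\R^{3d})$ with $(\pi_1,\pi_2)_\#\theta=\gamma$ and $(\pi_1,\pi_3)_\#\theta=\sigma$. The key construction is the ``velocity–translated'' competitor
\[
\nu':=(\pi_3+\pi_2-\pi_1)_\#\theta,\qquad \gamma':=(\pi_3,\pi_3+\pi_2-\pi_1)_\#\theta\in\Gamma(\mu',\nu'),
\]
for which $y'-x'=y-x$ holds $\theta$-almost everywhere, so that the Lagrangian cost is preserved exactly, while $(\pi_2,\pi_3+\pi_2-\pi_1)_\#\theta\in\Gamma(\nu,\nu')$ delivers $W_2(\nu,\nu')\leq W_2(\mu,\mu')$. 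Plugging $(\nu',\gamma')$ as a competitor in \eqref{eq:hopf-lax1} for $\mathcal V(t,\mu')$, using the $W_2$-Lipschitz continuity of $\mathcal G$ and $\varepsilon$-optimality, then letting $\varepsilon\to 0$ and swapping the roles of $\mu,\mu'$, gives $\Lip(\mathcal V(t,\cdot);W_2)\leq\Lip(\mathcal G;W_2)$.

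\medskip

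\emph{Step 2 (time estimate via DPP).} Testing the DPP \eqref{semigroup Hopf-Lax} with the ``rest'' plan $\gamma=\mu\otimes\delta_0$ (which satisfies $\exp_\mu(\gamma)=\mu$ and $\mathcal L(\gamma)=L(0)=0$) immediately yields the monotonicity $\mathcal V(t,\mu)\leq\mathcal V(s,\mu)$ for $t<s$. For the reverse bound, the spatial estimate of Step 1 combined with the inequality $W_2(\mu,\exp_\mu(\gamma))\leq\|\gamma\|_\mu$ gives $\mathcal V(s,\exp_\mu(\gamma))\geq\mathcal V(s,\mu)-\Lip(\mathcal G;W_2)\|\gamma\|_\mu$ for every $\gamma\in\mathcal P_{2,\mu}(\R^{2d})$. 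Substituting into \eqref{semigroup Hopf-Lax} and rescaling $\tilde\gamma:=\tfrac{1}{s-t}\cdot\gamma$, which satisfies $\|\gamma\|_\mu=(s-t)\|\tilde\gamma\|_\mu$, one obtains
\[
\mathcal V(s,\mu)-\mathcal V(t,\mu)\leq (s-t)\sup_{\tilde\gamma\in\mathcal P_{2,\mu}(\R^{2d})}\Big\{\Lip(\mathcal G;W_2)\,\|\tilde\gamma\|_\mu-\mathcal L(\tilde\gamma)\Big\}.
\]

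\medskip

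\emph{Step 3 (Fenchel identification; main obstacle).} The delicate point is identifying the Fenchel-like supremum above with $\sup\{\mathcal H(\xi):\|\xi\|_\mu\leq\Lip(\mathcal G;W_2)\}$. For $\tilde\gamma\neq\mu\otimes\delta_0$, I would choose the competitor $\xi:=\tfrac{\Lip(\mathcal G;W_2)}{\|\tilde\gamma\|_\mu}\cdot(-1\cdot\tilde\gamma)\in\mathcal P_{2,\mu}(\R^{2d})$, noting that $\|\xi\|_\mu=\Lip(\mathcal G;W_2)$ and that the explicit gluing $\big(\pi_1,-\tfrac{\Lip(\mathcal G;W_2)}{\|\tilde\gamma\|_\mu}\pi_2,\pi_2\big)_\#\tilde\gamma$ saturates the one-sided Cauchy--Schwarz inequality for the scalar product \eqref{scalar}, so that $\big<-\xi,\tilde\gamma\big>_\mu=\Lip(\mathcal G;W_2)\,\|\tilde\gamma\|_\mu$. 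The Fenchel representation of $\mathcal H$ provided by Lemma \ref{l:Fenchel} then yields $\mathcal H(\xi)\geq\Lip(\mathcal G;W_2)\|\tilde\gamma\|_\mu-\mathcal L(\tilde\gamma)$, closing the bound; the case $\tilde\gamma=\mu\otimes\delta_0$ is trivial since both sides vanish. Finiteness of the resulting constant follows at once from the quadratic growth condition (c) of Assumption \ref{a: DataHyp}, which gives $\mathcal H(\xi)\leq C\bigl(1+\Lip(\mathcal G;W_2)^2+\mathcal M_2(\mu)\bigr)<\infty$.
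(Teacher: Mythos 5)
Your proof is correct and follows essentially the same strategy as the paper: a gluing construction preserving the velocity law for the spatial estimate, and the DPP combined with the spatial estimate and a Fenchel-type identification for the temporal one. The only cosmetic difference is in the last identification: where the paper writes $\Lip(\mathcal{G};W_2)\,\|\zeta\|_{\mu}=\max\{\langle\xi,\zeta\rangle_{\mu}:\|\xi\|_{\mu}\le\Lip(\mathcal{G};W_2)\}$ and then swaps suprema before invoking Lemma~\ref{l:Fenchel}, you select the specific $\xi=\tfrac{\Lip(\mathcal{G};W_2)}{\|\tilde\gamma\|_{\mu}}\cdot(-1\cdot\tilde\gamma)$ that saturates the one-sided Cauchy--Schwarz inequality and apply the Fenchel inequality once; the two are equivalent and you correctly note that the lower-bound direction you obtain is all that is needed for the stated Lipschitz constant.
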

\begin{proof}The proof is an adaptation of the proof of \cite[Lemma 2, Chapter 3] {Evans} to the Wasserstein framework.

Fix $0<t<T, \mu,\nu \in \mathcal{P}_2(\R^d)$. Fix $\varepsilon>0$, and choose $\gamma \in \mathcal{P}_{2,\mu}(\R^{2d})$ s.t.
\begin{equation*}
    \mathcal{G}(\exp_{\mu}(\gamma)) +(T-t)\mathcal{L}\left(\frac{1}{T-t}\cdot\gamma\right)\leq \mathcal{V}(t,\mu)+ \varepsilon,
\end{equation*}
and pick $\theta\in \Gamma_{0}(\gamma,\nu).$ We note that $\xi:=(\pi_3,\pi_2+\pi_3-\pi_1)_{\#}\theta \in \mathcal{P}_{2,\nu}(\R^d)$, and $\mathcal{L}(\xi_{T-t})=\mathcal{L}\left(\frac{1}{T-t}\cdot\gamma\right)$, where for $\xi_{T-t}$ we adopted the same notation used in the proof of the DPP, Proposition \ref{DPP}. 
Therefore,
\begin{align*}
    \mathcal{V}(t,\nu)-\mathcal{V}(t,\mu)&\leq\inf_{\zeta \in \mathcal{P}_{2,\nu}(\R^{2d})}\Big\{ \mathcal{G}((\pi_2)_{\#}\zeta) +(T-t)\mathcal{L}\left(\zeta_{T-t}\right)\Big\} -\mathcal{G}(\exp_{\mu}(\gamma))-(T-t)\mathcal{L}\left(\frac{1}{T-t}\cdot\gamma\right) + \varepsilon\\
    &{\underbrace{\leq}_{\zeta=\xi}} \mathcal{G}((\pi_2)_{\#}\xi)-\mathcal{G}(\exp_{\mu}(\gamma))+ \varepsilon\leq \Lip(\mathcal{G};W_2)W_2((\pi_2)_{\#}\xi,\exp_{\mu}(\gamma))+ \varepsilon\\
    &=  \Lip(\mathcal{G};W_2)\left(\int_{\R^{3d}
    } |z-x|^2d\theta(x,y,z)\right)^{\frac{1}{2}} + \varepsilon\\
    &=\Lip(\mathcal{G}; W_2)W_2(\mu,\nu)+\varepsilon.
\end{align*}

Since $\varepsilon$ is arbitrary, we can pass to the limit by letting $\varepsilon \to 0$, then inverting the role of the measures $\mu$ and $\nu$, we get the desired bound. In particular, $\Lip(\mathcal{V}(t,\cdot); W_2)$ is time independent.

We now prove the time Lipschitz estimate.  By the DPP, Proposition \ref{DPP}, fixing  $0<t<t+h<T$ for $ h>0$ and choosing $\gamma= (\mu \otimes \delta_0) \in \mathcal P_{2,\mu}(\R^{2d})$ we have 
\begin{equation}\label{value decreasing}
    \mathcal{V}(t,\mu)\leq \mathcal{V}(t+h,\mu) + h \mathcal{L}\left(\frac{1}{h}\cdot (\mu \otimes \delta_0)  \right)=\mathcal{V}(t+h,\mu),
\end{equation}
where we used the fact that $L(0)=0$.

On the other hand, using again the DPP and  $\Lip(\mathcal V(t,\cdot); W_2)\leq \Lip(\mathcal{G}; W_2)$, we have
\begin{align}
   \mathcal{V}(t,\mu)
   &=  \inf_{\zeta \in \mathcal{P}_{2,\mu}(\R^{2d})}\left\{\mathcal{V}(t+h,\exp_\mu (\zeta))  + h\mathcal{L}\left(\frac{1}{h}\cdot \zeta\right)\right\} \notag\\
   &\geq \inf_{\zeta \in \mathcal{P}_{2,\mu}(\R^{2d})}\left\{\mathcal{V}(t+h,\mu)  + h\mathcal{L}\left(\frac{1}{h}\cdot \zeta\right)-\Lip(\mathcal{G}; W_2)W_2(\mu,\exp_{\mu}(\zeta))\right\} \notag\\
   &=  \mathcal{V}(t+h,\mu) + \inf_{\zeta \in \mathcal{P}_{2,\mu}(\R^{2d})}\left\{h\mathcal{L}\left(\frac{1}{h}\cdot \zeta\right)-\Lip(\mathcal{G}; W_2)W_2(\mu,\exp_{\mu}(\zeta))\right\} \label{Liscthizbound1}.
\end{align}

In particular, we have by the one sided Cauchy Schwarz inequality
\begin{align}
    \Lip(\mathcal{G};W_2)W_2(\mu,\exp_\mu(\zeta))&\leq \Lip(\mathcal{G};W_2)\|\zeta\|_{\mu} \notag\\
    &=\max \Big\{\big<\xi,\zeta\big>_{\mu} : {\xi\in \mathcal{P}_{2,\mu}(\R^{2d}), \|\xi\|_{\mu}\leq \Lip(\mathcal{G};W_2)} \Big\} \notag\\
    &=\max \Big\{\big<-\xi,\zeta\big>_{\mu} : {\xi\in \mathcal{P}_{2,\mu}(\R^{2d}), \|\xi\|_{\mu}\leq \Lip(\mathcal{G};W_2)} \Big\} \label{Lipbound2}. 
\end{align}
The combination of \eqref{Liscthizbound1} and \eqref{Lipbound2} gives, 
\begin{align*}
    \mathcal{V}(t,\mu)&\geq \mathcal{V}(t+h,\mu) + \inf_{\zeta \in \mathcal{P}_{2,\mu}(\R^{2d})}\left\{h\mathcal{L}\left(\frac{1}{h}\cdot \zeta\right)-\sup \Big\{\big<-\xi,\zeta\big>_{\mu} : {\xi\in \mathcal{P}_{2,\mu}(\R^{2d}), \|\xi\|_{\mu}\leq \Lip(\mathcal{G};W_2)} \Big\}\right\}\\
    &= \mathcal{V}(t+h,\mu) -h \sup \left\{- \mathcal{L}\left(\frac{1}{h}\cdot \zeta\right)+ \big<-\xi,\frac{1}{h}\cdot \zeta\big>_{\mu} \, \big| \, \zeta, \xi \in \mathcal{P}_{2,\mu}(\R^{2d}), \|\xi\|_{\mu}\leq \Lip(\mathcal{G};W_2) \right\}\\
    &=\mathcal{V}(t+h, \mu)-h \sup \Big\{\mathcal{H}(\xi) \,|\, \xi \in \mathcal{P}_{2,\mu}(\R^{2d}), \|\xi\|_{\mu}\leq \Lip(\mathcal{G};W_2)  \Big\}.  
\end{align*}
Summarizing, 
\begin{equation*}
    0\underbrace{\leq}_{\eqref{value decreasing}}\mathcal{V}(t+h,\mu)-\mathcal{V}(t,\mu)\leq h \sup \left\{\mathcal{H}(\xi) \,|\, \xi \in \mathcal{P}_{2,\mu}(\R^{2d}), \|\xi\|_{\mu}\leq \Lip(\mathcal{G};W_2)  \right\}, 
\end{equation*}
where the RHS is finite thanks to the quadratic growth assumption on the Hamiltonian. 
\end{proof}

\begin{lem}[Infinitesimal behavior of optimizers]\label{Infinitesimal Behavior}
Let $0<t<t+h<T$ for $h>0$ and $\mu\in\mathcal P_2(\R^d)$. Fix $\varepsilon>0$, and
let $\xi \in \mathcal{P}_{2,\mu}(\R^{2d})$ be an $\varepsilon$-optimizer of $$
\mathcal{V}(t,\mu)=\inf_{\gamma \in \mathcal{P}_{2,\mu}(\R^{2d})}\left\{ \mathcal{V}(t+h,\exp_{\mu}(\gamma))+ h\mathcal{L}\left(\frac{1}{h}\cdot \gamma\right)\right\}.
$$ 
If $\mathrm{Lip}(\mathcal{G}; W_2)< \infty$, then there exists a constant $C>0$, depending only on $\mathrm{Lip}(\mathcal{G}; W_2)$ and on the coercivity parameters of $L$, s.t. $$\|\xi\|_{\mu}\leq C(h+ \sqrt{\varepsilon h}).$$
\end{lem}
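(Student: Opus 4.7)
The plan is to combine the $\varepsilon$-optimality of $\xi$ with the monotonicity estimate $\mathcal{V}(t,\mu)\leq \mathcal{V}(t+h,\mu)$ proved in Lemma~\ref{LPHL} (obtained by testing the DPP with $\gamma=\mu\otimes\delta_o$ using $L(0)=0$), the $W_2$-Lipschitz regularity of $\mathcal{V}(t+h,\cdot)$ with constant $\mathrm{Lip}(\mathcal{G};W_2)$, and finally the quadratic coercivity of the Lagrangian $\mathcal{L}$.

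First, from the $\varepsilon$-optimality of $\xi$ and the DPP, I would write
\begin{equation*}
\mathcal{V}(t+h,\exp_{\mu}(\xi))+h\,\mathcal{L}\!\left(\tfrac{1}{h}\cdot\xi\right) \leq \mathcal{V}(t,\mu)+\varepsilon \leq \mathcal{V}(t+h,\mu)+\varepsilon,
\end{equation*}
where in the last step I used $\mathcal{V}(t,\mu)\leq \mathcal{V}(t+h,\mu)$ as in \eqref{value decreasing}. By the $W_2$-Lipschitz regularity of $\mathcal V(t+h,\cdot)$ and the elementary estimate $W_2(\mu,\exp_{\mu}(\xi))\leq \|\xi\|_{\mu}$, I get
\begin{equation*}
h\,\mathcal{L}\!\left(\tfrac{1}{h}\cdot\xi\right) \leq \mathrm{Lip}(\mathcal{G};W_2)\,\|\xi\|_{\mu}+\varepsilon.
\end{equation*}

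Next, I would invoke the coercivity assumption \eqref{item:HopfL} on $L$, which in its relaxed form gives \eqref{coercivity Lagrangian MF}, namely $\mathcal{L}(\zeta)\geq c\|\zeta\|_{(\pi_1)_{\#}\zeta}^{2}-C$ for every $\zeta\in\mathcal{P}_2(\R^{2d})$. Since $\|\tfrac{1}{h}\cdot\xi\|_{\mu}=\tfrac{1}{h}\|\xi\|_{\mu}$ (because multiplication by $1/h$ rescales the second coordinate by $1/h$ and leaves the first marginal $\mu$ unchanged), this yields
\begin{equation*}
h\,\mathcal{L}\!\left(\tfrac{1}{h}\cdot\xi\right) \geq \frac{c}{h}\,\|\xi\|_{\mu}^{2}-Ch.
\end{equation*}
Setting $L_0:=\mathrm{Lip}(\mathcal{G};W_2)$ and $s:=\|\xi\|_{\mu}$, the two inequalities combine into the scalar quadratic inequality
\begin{equation*}
\frac{c}{h}s^{2}-L_0\,s-(Ch+\varepsilon)\leq 0.
\end{equation*}

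Finally, solving this quadratic inequality for $s\geq 0$ and using $\sqrt{a+b}\leq\sqrt{a}+\sqrt{b}$, I obtain
\begin{equation*}
\|\xi\|_{\mu}\leq \frac{hL_0+\sqrt{h^{2}L_0^{2}+4ch(Ch+\varepsilon)}}{2c} \leq C'\,h+C''\sqrt{\varepsilon h},
\end{equation*}
for constants $C',C''$ depending only on $L_0$ and on the coercivity constants $c,C$ of $L$. This gives the desired bound $\|\xi\|_{\mu}\leq C(h+\sqrt{\varepsilon h})$. There is no real obstacle in this argument; the only delicate point is to make sure that the rescaling identity $\|\tfrac{1}{h}\cdot\xi\|_{\mu}=\tfrac{1}{h}\|\xi\|_{\mu}$ and the inequality $W_2(\mu,\exp_{\mu}(\xi))\leq\|\xi\|_{\mu}$ are applied to the plan $\xi\in\mathcal{P}_{2,\mu}(\R^{2d})$ without requiring $\xi$ to be optimal.
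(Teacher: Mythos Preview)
Your argument is correct and follows essentially the same route as the paper's proof: combine $\varepsilon$-optimality with the monotonicity $\mathcal V(t,\mu)\le\mathcal V(t+h,\mu)$, the $W_2$-Lipschitz bound for $\mathcal V(t+h,\cdot)$, and the coercivity of $\mathcal L$ to obtain a quadratic inequality in $\|\xi\|_\mu$. The only cosmetic difference is that you solve the quadratic explicitly, whereas the paper uses an absorbing argument to reach $\frac{1}{h}\|\xi\|_\mu^2\le C(h+\varepsilon)$ and then takes square roots.
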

\begin{proof}
Let $0<t<t+h<T$ for $h>0$ and $\mu\in\mathcal P_2(\R^d)$. Fix $\varepsilon>0$ and let $\xi \in \mathcal{P}_{2,\mu}(\R^{2d})$ be an $\varepsilon$-optimizer. We omit here the  dependence of $\xi$ on $h$. By the minimality property and \eqref{value decreasing}, we deduce 
\begin{equation*}
    \mathcal{V}(t+h,\exp_{\mu}(\xi))+ h\mathcal{L}\left(\frac{1}{h}\cdot \xi\right) \leq \mathcal{V}(t,\mu)  +\varepsilon\leq \mathcal{V}(t+h,\mu)  +\varepsilon,
\end{equation*} 
hence
\begin{equation*}
    0\leq \mathcal{V}(t+h,\mu)-\mathcal{V}(t+h,\exp_{\mu}(\xi)) - h\mathcal{L}\left(\frac{1}{h}\cdot \xi\right)+\varepsilon.
\end{equation*}
By the Lipschitz property in Lemma \ref{LPHL}, we have
\begin{equation*}
h\mathcal{L}\left(\frac{1}{h}\cdot \xi\right)\leq \Lip(\mathcal{V}(t+h, \cdot);W_2)W_2(\mu,\exp_{\mu}(\xi))+ \varepsilon \underbrace{\leq}_{ \text{Lemma }\ref{LPHL}} \Lip(\mathcal{G};W_2)W_2(\mu,\exp_{\mu}(\xi)) + \varepsilon .
\end{equation*}
Moreover, by the coercivity assumption on the Lagrangian, 
\begin{equation*}
    \-Ch +c\frac{1}{h}\|\xi\|^2_\mu\leq h\mathcal{L}\left(\frac{1}{h}\cdot \xi\right)\leq \Lip(\mathcal{G};W_2)W_2(\mu,\exp_{\mu}(\xi))+ \varepsilon\leq \Lip(\mathcal{G};W_2)\|\xi\|_\mu + \varepsilon.
\end{equation*}
By a standard absorbing argument we deduce 
$$\frac{1}{h}\|\xi\|^2_{\mu}\leq C(h+ \varepsilon),$$ with $C$ depending on $\mathcal G$ and on the growth parameters of $L$. Using the subadditivity of the square root, we conclude the claim.

\end{proof}

\subsection{Existence result}\label{existence}
We now show that the Hopf--Lax formula defines a viscosity solution, in
the sense of Definition~\ref{D:solndet}, to equation~\eqref{HJBDet}.

\subsubsection{Subsolution property}

Fix $(t,\mu)\in (0,T)\times \mathcal{P}_2(\R^d)$ and
let $(r,\gamma)\in \partial^{+}\mathcal V(t,\mu)$, namely $\forall \xi  \in \mathcal{P}_{2,\mu}(\R^{2d})$ and $\theta \in \Gamma_{\mu}(\gamma,\xi)$
\begin{equation*}
    \mathcal V(t+h,\exp_{\mu}(h\cdot \xi))-\mathcal V(t,\mu)\leq h\big<\gamma,\xi\big>_{\theta,\mu}+ rh + o(h\|\xi\|_\mu)+o(h), \quad \forall h>0 \text{ small enough}.
\end{equation*}
By the DPP, Proposition \ref{DPP}, we have 
\begin{equation*}
    \mathcal V(t,\mu)\leq \mathcal V(t+h,\exp_{\mu}(\tilde\xi)) + h\mathcal{L}\left(\frac{1}{h} \cdot \tilde \xi\right) \quad  \forall \tilde{\xi}\in \mathcal{P}_{2,\mu}(\R^{2d}).
\end{equation*}
In particular, by choosing $\tilde \xi=h\cdot \xi$ and combining the two previous  inequalities, we get
\begin{align*}
    0\leq h\mathcal{L}(\xi)+h\big<\gamma,\xi\big>_{\theta,\mu}+rh + o(h) +  o(h\|\xi\|_\mu), \quad \forall  \xi \in  \mathcal{P}_{2,\mu}(\R^{2d}), \theta \in \Gamma_{\mu}(\gamma,\xi), h>0 \text{ small enough}.
\end{align*}
Dividing by $h>0$ and sending $h \to 0$, we get
\begin{equation}\label{e: Fenchel Subsolution.}
   0\leq \mathcal{L}(\xi)+\big<\gamma,\xi\big>_{\theta,\mu}+r .
\end{equation}
Therefore, since $\xi\in \mathcal{P}_{2,\mu}(\R^{2d})$ and $\theta \in \Gamma_{\mu}(\gamma,\xi)$ are arbitrary,  we can invoke Lemma \ref{l:Fenchel} to obtain
\begin{equation*}
    0\leq - \mathcal{H}(\gamma)+r,
\end{equation*}
the desired inequality.

\begin{rmk}\label{decreasing}
The proof of the subsolution property motivates our choice of superdifferential. 
Indeed, in general, the Fenchel duality, stated in equation \eqref{Fenchelduality} of  Lemma~\ref{l:Fenchel}, does not 
appear to hold when the supremum is restricted to the tangent space. More precisely, for a fixed $\mu \in \mathcal{P}_2(\R^d)$, and for any
$\gamma \in \Tan^K_{\mu}\mathcal{P}_2(\R^d)$, we only have
\begin{align}\label{inequality Fenchel}
-\mathcal{H}(\gamma)
&\leq -\sup_{\xi \in \Tan^K_{\mu}\mathcal{P}_2(\mathbb{R}^d)}
\Big\{-\langle \xi,\gamma\rangle_{\mu}-\mathcal{L}(\xi)\Big\}\\
&= \inf_{\xi \in \Tan^K_{\mu}\mathcal{P}_2(\mathbb{R}^d)}
\Big\{\langle \xi,\gamma\rangle_{\mu}+\mathcal{L}(\xi)\Big\}\nonumber.
\end{align}

At present, we do not know whether equality holds for a general Lagrangian under  our standing assumptions. Nevertheless, under the additional 
invariance property for the Lagrangian
\begin{equation}\label{e: decreasingLagrangian}
\mathcal{L}(\mathrm{P}(\xi)) \leq \mathcal{L}(\xi),
\end{equation} the inequality in \eqref{inequality Fenchel} is indeed an equality as proven in Lemma \ref{l:Fenchel}. In this case,  we could have used, in the definition  of viscosity subsolution, the Kantorovich superdifferential, see Definition \ref{d: classicdiff},  intersected with the tangent space $\Tan^K_{\mu}\mathcal{P}_2(\R^d)$. Indeed, the inequality \eqref{e: Fenchel Subsolution.} would read as 
$$
0\leq \mathcal{L}(\xi)+\big<\gamma,\xi\big>_{\mu}+r, \quad \forall \xi \in \Tan^K_{\mu}\mathcal{P}_2(\R^d), 
$$
and, under the decreasing property \eqref{e: decreasingLagrangian} and by Lemma \ref{l:Fenchel},  
$$
0\leq -\mathcal H(\gamma)+r,
$$
concluding the argument.

We underline that this condition is 
reminiscent of the decreasing property along a projection operator introduced in
\cite[Equation (5.7), p.~27]{GANGBO2019119}, where the projection is $\text{Pr}_{\mu}\circ\text{P}: \mathcal{P}_{2,\mu}(\R^{2d}) \to \Tan^M_{\mu}\mathcal{P}_2(\R^d)$,  see \cite[Definition ~2.5]{GANGBO2019119}.
\end{rmk}

\subsubsection{Supersolution Property}\label{subsection: supersolution}

Fix $(t,\mu)\in (0,T)\times \mathcal{P}_2(\R^d)$ and let $(r,\gamma)\in \partial^{-}\mathcal V(t,\mu)$. Therefore, $\forall \xi  \in \mathcal{P}_{2,\mu}(\R^{2d})$  we have
\begin{align*}
     \mathcal V(t+h, \exp_{\mu}(\xi)) -\mathcal V(t,\mu)\geq \big<\gamma,\xi\big>_{\theta,\mu}+rh+ o(\|\xi\|_{\mu})+o(h), \quad \forall h>0 \text{ small enough.}
\end{align*}

For a fixed $h>0$, let $\xi \in \mathcal{P}_{2,\mu}(\R^{2d})$ be an $\varepsilon$-optimizer of \eqref{semigroup Hopf-Lax}, with $\varepsilon=h$ i.e.  $$\mathcal V(t,\mu) + h \geq \mathcal V(t+h,\exp_{\mu}(\xi))+ h \mathcal{L}\left(\frac{1}{h}\cdot \xi\right).$$ 
We do not stress here, as before, the dependence of $\xi$ on $h>0$. In any case, we know by Lemma \ref{Infinitesimal Behavior} that $\|\xi\|_{\mu}\leq 2Ch$, with $C$ specified therein. Therefore $o(\|\xi\|_{\mu})=o(h)$. 

Combining the two inequalities above at such a $\xi$ , we obtain 
\begin{equation*}
    -h\mathcal{L}\left(\frac{1}{h}\cdot \xi\right)\geq  h\big<\gamma,\frac{1}{h}\cdot \xi \big>_{\theta,\mu}+ rh + o(h). 
\end{equation*}
By the Fenchel inequality in Lemma \ref{l:Fenchel}, we have
\begin{equation*}
    -h\mathcal{L}\left(\frac{1}{h}\cdot \xi\right)+ h\big<\gamma, -\frac{1}{h}\cdot\xi\big>_{\theta,\mu}\leq  h\mathcal{H}(\gamma),
\end{equation*}
hence we obtain
\begin{equation*}
 -rh+h\mathcal{H}(\gamma)\geq o(h).
\end{equation*}
We conclude dividing by $h>0$ and passing to the limit as $h\to 0$.

\begin{rmk}
We leave for future work the analysis of the geodesic semiconcavity regularization property mentioned at the beginning of this section, in the case where the Lagrangian satisfies, for some \(\theta > 0\), the additional estimate
\[
\frac{1}{2}L(v_1)+\frac{1}{2}L(v_2)\leq L\!\left(\frac{v_1+v_2}{2}\right)+ \frac{1}{\theta}\lvert v_1-v_2\rvert^2, \qquad \forall\, v_1,v_2 \in \mathbb{R}^d,
\]
as in \cite[Lemma 4, p.~131]{Evans}. Another interesting direction for future research is to establish geodesic semiconcavity in the Wasserstein space via PDE-based estimates for viscosity solutions of both equations \eqref{HJBDet} and \eqref{HJBId}.

\end{rmk}

\section{Useful Estimates and Lemmata} \label{s: section3}

In this section, we derive some results that will be used in the main argument for the proof of the vanishing viscosity limit.

We first start by showing that, as a consequence of the Benamou-Brenier control representation, the elements of the subdifferential of $W_2$-Lipschitz functions are bounded.
\begin{lem}($L^2$ version of \cite[Lemma ~3.4]{DS}) \label{l: bounddiffpenali}
Let $U: \mathcal P_2(\R^d)\to \R$ be a $W_2$-Lipschitz function and set  $L=\Lip(U;W_2)< \infty$. Let  $\mu_0 \in \mathcal{P}_{2}(\R^d)$. 
\begin{itemize}
    \item Let $\delta > 0$, and $p\in\partial^{+}(U-\delta\mathcal{E})(\mu_0)$ (resp. $p \in \partial^{-}(U+\delta \mathcal {E})(\mu_0) $), then  $\| p + \delta \nabla \log \mu_0 \|_{L^2_{\mu_0}(\R^d)} \leq  L$ (resp. $\| p-\delta \nabla \log \mu_0 \|_{L^2_{\mu_0}(\R^d)} \leq  L $)  
    \item Let $\delta=0$, and $\xi \in \partial^{+}U(\mu_0)$ (resp. $\xi \in \partial^{-}U(\mu_0)$), then 
    $\| \xi \|_{\mu_0} \leq  L$.
\end{itemize}
\end{lem}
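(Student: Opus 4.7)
The plan is to test the defining strong superdifferential inequality of $p$ against a family of smooth perturbations of $\mu_0$. Since $\mu_0$ is absolutely continuous, Lemma~\ref{Strong=Opt} identifies the plan $p\in\partial^{+}(U-\delta\mathcal{E})(\mu_0)$ with its barycentric projection in $L^2_{\mu_0}(\R^d)$, and the strong superdifferential property reads, for every $\xi\in\mathcal{P}_{2,\mu_0}(\R^{2d})$,
\begin{equation*}
(U-\delta\mathcal{E})(\exp_{\mu_0}(\xi))-(U-\delta\mathcal{E})(\mu_0)\le\int_{\R^{2d}}\langle p(x),z\rangle\,d\xi(x,z)+o(\|\xi\|_{\mu_0}).
\end{equation*}

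Given $v\in C_c^\infty(\R^d;\R^d)$ and $t>0$ small, I would take $\xi_t:=(\Id\times tv)_{\#}\mu_0$, so that $\exp_{\mu_0}(\xi_t)=(\Id+tv)_{\#}\mu_0=:\mu_t$ and $\|\xi_t\|_{\mu_0}=t\|v\|_{L^2_{\mu_0}}$. The right-hand side of the inequality becomes $t\langle p,v\rangle_{L^2_{\mu_0}}+o(t)$. On the left-hand side I would estimate the two contributions separately: the $W_2$-Lipschitz hypothesis on $U$, together with $W_2(\mu_t,\mu_0)\le\|\xi_t\|_{\mu_0}$, gives $U(\mu_t)-U(\mu_0)\ge-Lt\|v\|_{L^2_{\mu_0}}$, while the classical change of variables along the smooth diffeomorphism $\Id+tv$ yields
\begin{equation*}
\mathcal{E}(\mu_t)-\mathcal{E}(\mu_0)=-t\int_{\R^d}\div(v)\,d\mu_0+o(t).
\end{equation*}

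Combining these, dividing by $t$, and sending $t\downarrow 0^+$ produces
\begin{equation*}
-L\|v\|_{L^2_{\mu_0}}\le\langle p,v\rangle_{L^2_{\mu_0}}-\delta\int_{\R^d}\div(v)\,d\mu_0\qquad\forall v\in C_c^\infty(\R^d;\R^d),
\end{equation*}
and, by replacing $v$ with $-v$, the opposite inequality. Thus the linear functional $\Phi(v):=\langle p,v\rangle_{L^2_{\mu_0}}-\delta\int\div(v)\,d\mu_0$ is bounded on $(C_c^\infty,\|\cdot\|_{L^2_{\mu_0}})$ by $L$. By a density and Riesz representation argument, $\Phi$ extends to an element of $(L^2_{\mu_0})^\ast$ of norm at most $L$; this simultaneously forces $\nabla\log\mu_0\in L^2_{\mu_0}$ (i.e.\ $\mathcal{I}(\mu_0)<\infty$) and identifies $\Phi$ through integration by parts with the vector field $p+\delta\nabla\log\mu_0$, yielding the stated bound. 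The supersolution case follows from an entirely symmetric argument and gives $\|p-\delta\nabla\log\mu_0\|_{L^2_{\mu_0}}\le L$.

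The main obstacle is the concluding density/Riesz step: the integration by parts that turns $-\delta\int\div(v)\,d\mu_0$ into $\delta\langle\nabla\log\mu_0,v\rangle_{L^2_{\mu_0}}$ cannot be invoked a priori, so the finiteness of the Fisher information has to be extracted, as a byproduct, from the uniform $L^2$-duality bound obtained via smooth test fields. Everything else (the estimate $W_2(\mu_t,\mu_0)\le\|\xi_t\|_{\mu_0}$, the entropy expansion along the diffeomorphism $\Id+tv$, and the identification of superdifferentials with $L^2_{\mu_0}$ vector fields) is standard in the Wasserstein calculus recalled in Section~\ref{s:Set Up}.
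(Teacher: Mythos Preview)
Your proof is correct and follows essentially the same line as the paper's: test against smooth compactly supported vector fields, use the $W_2$-Lipschitz bound on $U$, compute the first variation of the entropy, and conclude by $L^2_{\mu_0}$-duality. Your implementation is in fact slightly more streamlined than the paper's: by taking directly $\xi_t=(\Id\times tv)_{\#}\mu_0$ and applying the strong superdifferential inequality, you avoid the detour through the continuity-equation flow and the auxiliary approximation of $p$ by gradients $\nabla\varphi$ that the paper uses, while arriving at the identical distributional bound $|\langle p,v\rangle_{L^2_{\mu_0}}-\delta\int\div(v)\,d\mu_0|\le L\|v\|_{L^2_{\mu_0}}$ and the same Riesz/duality conclusion.
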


\begin{proof} 
We distinguish the two cases.
The first case is treated very similarly to the one given in \cite[Lemma ~3.4]{DS}, we only give a sketch of it in the superdifferential case. 
    
    Let $v: \R^d \to \R^d$ be smooth and bounded, and consider, for some $t_0 \geq 0$, the flow given by the continuity equation
	\begin{equation*}\label{v:cty}
		\del_t \mu_t + \div(v(x) \mu_t) = 0 \quad \text{in } (t_0,\oo), \quad \mu|_{t = t_0} = \mu_0.
	\end{equation*}
	In view of the smoothness of $v$, the solution  $[t_0,T] \ni t \mapsto \mu_t \in L^1(\Rd)$ is continuous. 
	
	Moreover, by the representation theorem for solutions of the continuity equation given in \cite[Thm.~8.3.1]{AGS} we have 
    \begin{equation*}\label{d2estimate}
W_2(\mu_t,\mu_{t_0})
\underset{\text{\cite[Box~5.1]{santambrogio2015optimal}}}{\le}
\int_{t_0}^t |\mu'|(s)\,{d}s
\underset{\text{\cite[Thm.~8.3.1]{AGS}}}{\le}
\int_{t_0}^t \|v\|_{L^2_{\mu_s}(\R^d)}\,{d}s,
	\end{equation*}
    where $ |\mu'|(s)$ denotes the metric derivative w.r.t. time in $(\mathcal{P}_2(\R^d),W_2)$ at time $s \in (t_0,t)$ of the absolutely continuous curve $t \mapsto \mu_t$.
	The previous bound and  $L=\Lip(U;W_2)<\infty$, give
 \begin{equation*}
     \vert U(\mu_t)-U(\mu_0) \vert \leq  L W_2(\mu_t,\mu_0) \le L \int_{t_0}^t \| v \|_{L^2_{\mu_s}(\R^d)}ds.
 \end{equation*}
 Moreover, by the renormalization property satisfied by $v$ and standard regularization and localization procedures (see \cite[Lemma ~3.4]{DS}), we also have
 \begin{equation*}
     \mathcal{E}(\mu_t)-\mathcal{E}(\mu_0)=-\int_{t_0}^{t} \int_{\R^d} \text{div} \,v(x)d\mu_s ds.
 \end{equation*}

	Fix now $h,\varepsilon > 0$ and $\varphi\in C_{c}^\infty(\R^d)$. By similar estimates performed in \cite[Lemma ~3.4]{DS}, it follows, from the superdifferential property and the Cauchy Schwarz inequality, that for all sufficiently small $h>0$, depending on $\|v\|_{L^2_{\mu_0}(\R^d)}$ and $\varepsilon$, 
	\begin{align*}
		&\int_{t_0}^{t_0+h} \int_\Rd \left( \langle \nabla \varphi(x), v(x)\rangle  - \delta \div v(x) \right) d \mu_s(x) \\
        &\geq  - L \int_{t_0}^{t_0+h} \|{v}\|_{L^2_{\mu_{s}}(\R^d)}ds - \|\nabla^2 \varphi\|_{L^2_{\mu_0}(\R^d)} o(h) - \norm{\nabla \varphi - p}_{L^2_{\mu_0}(\R^d)} \int_{t_0}^{t_0+h} \|{v}\|_{L^2_{\mu_{s}}(\R^d)}ds - \varepsilon h,    
	\end{align*}
    where $p \in\partial^{+}(U-\delta\mathcal{E})(\mu_0)$ has been taken as in the statement.

	Dividing by $h$ and sending $h \to 0^+$, due to the fact that  $v$ and $\varphi$ are  smooth and bounded, and $\mu_s \to \mu_0$ as $s \to t_0^+$ in $L^1(\Rd)$, we obtain
	\begin{align*}
		 \int_\Rd \left(\langle  \nabla \varphi(x), v(x)\rangle  - \delta \div v(x) \right)d \mu_0(x)
		\ge -  L\| v \|_{L^2_{\mu_0}(\R^d)} - \norm{\nabla \varphi - p}_{L^2_{\mu_0}(\R^d)} \|{v}\|_{L^2_{\mu_0}(\R^d)} - \varepsilon.
	\end{align*}
	Since $\varepsilon$ and $\varphi$ were arbitrary and $p \in \Tan^M_{\mu_0}\mcl P_2(\Rd)$, we infer that
	\[
		\int_\Rd \left( \big<p(x),  v(x)\big> - \delta \div v(x) \right)d \mu_0(x)
		\ge -  L\| v \|_{L^2_{\mu_{0}}(\R^d)}.
	\]
	In other words, by the arbitrariness of $v$, we conclude that the distribution
	\[
		p + \delta \nabla \log \mu_0 = p + \delta \frac{\nabla \mu_0}{\mu_0} 
	\]
	is bounded in $L^2_{\mu_0}(\R^d)$ with operator norm $L$. By the $L_{\mu_0}^2(\R^d)$ duality, this element belongs to $L^2_{\mu_0}(\R^d)$ and $\| p + \delta \nabla \log \mu_0 \|_{L^2_{\mu_0}(\R^d)} \leq  L$.\\
    
In the case $\delta=0$, we argue as follows. If $\xi \in \partial^{+}U(\mu_0)$, for $\gamma\in \mathcal P_{2,\bar \mu_0}(\R^{2d})$ and $\theta \in \Gamma_{\mu_0}(\xi, \gamma)$, 
\[
\|\gamma\|_{\mu_0} \, \Lip(U; W_2) 
\geq W_2(\mu_0, \exp_{\mu_0}(\gamma)) \, \Lip(U; W_2) 
\geq U(\exp_{\mu_0}(\gamma)) - U(\mu_0) 
\geq \langle \xi, \gamma \rangle_{\theta, \mu_0} + o(\|\gamma\|_{\mu_0}),
\]
which yields the claim by choosing \(\gamma = \frac{\varepsilon}{\|\xi\|_{\mu_0}} \, \xi\) with \(\varepsilon > 0\), dividing by \(\varepsilon\) and letting \(\varepsilon \to 0\).

\end{proof}
Viscosity solutions of \eqref{HJBDet} enjoy a time regularization.

\begin{lem}\label{l: Lipschitzregularityintime}
Let $U$ be a viscosity solution of \eqref{HJBDet}. Then the following implication holds
$$
L:=\sup_{t \in [0,T]} \Lip (U(t,\cdot); W_2) \Longrightarrow \sup_{\mu \in \mathcal{P}_2(\R^d)}\Lip_t(U(\cdot,\mu))\leq \sup_{\{\gamma \in \mathcal{P}_{2}(\R^{2d}): \|\gamma\|{_{(\pi_1)_\# \gamma}}\leq L\}}|\mathcal{H}(\gamma)|<\infty.
$$  
\end{lem}
\begin{proof}
It is sufficient to prove that each element in the time-differential is bounded by the constant $\sup_{\{\gamma \in \mathcal{P}_{2}(\R^{2d})\,:\, \|\gamma\|_{(\pi_1)_\# \gamma}\leq L\}}|\mathcal{H}(\gamma)|$. Firstly, we prove that this constant is finite. Indeed, by the growth assumption on $H$, using \eqref{growthH}, we have
$$
\sup_{\{\gamma \in \mathcal{P}_{2}(\R^{2d}): \|\gamma\|_{(\pi_1)_\# \gamma}\leq L\}}|\mathcal{H}(\gamma)| \leq \sup_{\{\gamma \in \mathcal{P}_{2}(\R^{2d}): \|\gamma\|_{(\pi_1)_\# \gamma}\leq L\}}C(1+ \|\gamma\|^2_{{(\pi_1)_\# \gamma}})\leq C(1+ L^2).
$$
Now fix $(t,\mu)\in [0,T)\times \mathcal{P}_2(\R^d)$ and consider $(r,\gamma)\in \partial^{+}U(t,\mu)$. By Definition \ref{D:solndet}, being $U$ a  viscosity subsolution of \eqref{HJBDet}, we have
$$
-|\mathcal{H}(\gamma)|\leq \mathcal{H}(\gamma)\underbrace{\leq}_{\text{Definition } \ref{D:solndet}} r. 
$$
Since $\gamma$ is an element of the superdifferential of a uniform in time Lipschitz function, we have 
$$
\|\gamma\|_{\mu}\leq L.
$$ 
Therefore, taking the infimum on a bigger set, we have
$$
\inf_{\{\gamma \in \mathcal{P}_{2,\mu}(\R^{2d}): \|\gamma\|_{\mu}\leq L\}}- |\mathcal{H}(\gamma)|\leq r \quad \forall r\in \partial_t^{+}U(t,\mu).
$$
With a similar computation, using the supersolution property, one can prove an analogous upper bound for the elements in the subdifferential. Namely,
$$
 r \leq \sup_{\{\gamma \in \mathcal{P}_{2,\mu}(\R^{2d}): \|\gamma\|_{\mu}\leq L\}} |\mathcal{H}(\gamma)|,\quad \forall r\in \partial_t^{-}U(t,\mu).
$$
\end{proof}
We now state a result that will be used in the proof of the vanishing viscosity limit.

\begin{lem}[Analogue of Proposition 3.1 of \cite{DS} in our context]\label{lem: comparison}
Let $U,V: \mathcal{P}_{2}(\R^d) \to \R$ be two bounded functions s.t. $\Lip(U; W_1)+ \Lip(V;W_2)<\infty$ and suppose $V$ is weakly l.s.c.. Fix $\alpha, \delta >0$, then the function $\Phi: \mathcal{P}_2(\R^d)\times \mathcal{P}_2(\R^d)\to [-\infty,+ \infty)$ defined by
\begin{equation*}
    \Phi(\mu,\nu):= U(\mu)-V(\nu)-\frac{1}{2\alpha}W_2(\mu,\nu)-\delta\mathcal{E}(\mu)-\delta\pi\mathcal{M}_2(\mu)-\delta\pi\mathcal{M}_2(\nu)
\end{equation*}
achieves its supremum. Moreover, any maximum point $(\bar \mu,\bar \nu)$ belongs to $\mathcal{P}_{2,ac}(\R^d)\times \mathcal{P}_{2}(\R^d)$, and
\begin{itemize}
    \item \label{1. Propo 3.1} setting \begin{equation*}
        p(x):=\frac{1}{\alpha}(x-T_{\bar \mu}^{\bar \nu}(x))+ 2\delta\pi x, \quad x\in \R^d
    \end{equation*}
      \begin{equation*}
        \xi:=(\pi_1,\frac{\pi_1-\pi_2}{\alpha}- (2 \delta\pi)\pi_1)_{\#}\sigma \in \Tan^K_{\bar \nu}\mathcal{P}_2(\R^d)
        \end{equation*} 
        where $\sigma \in \Gamma_{0}(\bar \nu,\bar \mu)$ is such that $(\pi_2,\pi_1)_{\#}\sigma= (\Id\times T_{\bar \mu}^{\bar \nu})_{\#} \bar \mu \in \Gamma_0(\bar \mu,\bar \nu),$ then $p \in {\partial^{+}}\big(U-\delta \mathcal{E}\big)(\bar \mu)$ and $\xi \in \partial^{-}V(\bar \nu)$. 
    Moreover,
    \begin{equation*}
        \|p+ \delta \nabla \log \bar \mu \|_{L^\infty(\R^d)}\leq \Lip(U;W_1), \, \|p+ \delta \nabla \log \bar \mu \|_{L^2_{\bar \mu}(\R^d)}\leq \Lip(U;W_2) \text{ and } \|\xi\|_{\bar\nu}\leq \Lip(V;W_2);
    \end{equation*}
    
    \item \label{2. Prop 3.1} $\bar \mu, \bar \mu^{-1} \in W^{1,\infty}_{\loc}(\R^d)$ and $\mathcal{I}(\bar \mu)< \infty$.
\end{itemize}
\end{lem}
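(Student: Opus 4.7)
The plan follows \cite{DS} and proceeds in three main steps.

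\emph{Existence of a maximiser and absolute continuity.} I would carry out the direct method on the narrow topology. Along any maximising sequence $(\mu_n,\nu_n)$, the boundedness of $U,V$ together with the nonnegativity of $\mathcal{E}^{*}=\mathcal{E}+\pi\mathcal{M}_2$ recalled in \eqref{eq: positivityEntropy} gives uniform upper bounds on $\delta\mathcal{E}^{*}(\mu_n)$, $\delta\pi\mathcal{M}_2(\nu_n)$ and $\tfrac{1}{2\alpha}W_2^2(\mu_n,\nu_n)$. Hence the second moments are uniformly bounded and, by Markov, both sequences are tight. Up to extraction, $\mu_n\rightharpoonup\bar\mu$ and $\nu_n\rightharpoonup\bar\nu$ narrowly, and the uniform second-moment bound upgrades this to $W_p$ convergence for any $p\in[1,2)$. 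Upper semicontinuity of $\Phi$ then follows from: (i) $W_1$-continuity of $U$; (ii) $W_p$-lower semicontinuity of $V$; (iii) joint narrow lower semicontinuity of $W_2^2$; (iv) narrow lower semicontinuity of $\mathcal{E}$ and $\mathcal{M}_2$. Consequently, $(\bar\mu,\bar\nu)$ attains the supremum, and $\Phi(\bar\mu,\bar\nu)>-\infty$ forces $\mathcal{E}(\bar\mu)<\infty$, so $\bar\mu\in\mathcal{P}_{2,ac}(\R^d)$.

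\emph{Extraction of the sub/superdifferential elements.} Freezing $\bar\nu$, the measure $\bar\mu$ maximises $U-\delta\mathcal{E}-G_1$ with $G_1(\mu):=\tfrac{1}{2\alpha}W_2^2(\mu,\bar\nu)+\delta\pi\mathcal{M}_2(\mu)$, so any superdifferential element of $G_1$ at $\bar\mu$ belongs to $\partial^{+}(U-\delta\mathcal E)(\bar\mu)$. Proposition~\ref{prop:super} supplies $(\Id,\Id-T^{\bar\nu}_{\bar\mu})_{\#}\bar\mu\in\partial^{+}(\tfrac{1}{2}W_2^2(\cdot,\bar\nu))(\bar\mu)$ (using the map since $\bar\mu$ is absolutely continuous), and $\delta\pi\mathcal{M}_2$ is Wasserstein differentiable with gradient $2\delta\pi x$. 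Their sum is the map-induced plan corresponding to $p(x)=\tfrac{1}{\alpha}(x-T^{\bar\nu}_{\bar\mu}(x))+2\delta\pi x$, which lies in $\Tan^{K}_{\bar\mu}\mathcal{P}_2(\R^d)$ since $p$ is the gradient of the Brenier convex potential plus that of the quadratic moment. Symmetrically, freezing $\bar\mu$, the measure $\bar\nu$ minimises $V+G_2$ with $G_2(\nu):=\tfrac{1}{2\alpha}W_2^2(\bar\mu,\nu)+\delta\pi\mathcal{M}_2(\nu)$; Proposition~\ref{prop:super} applied with an optimal $\sigma\in\Gamma_0(\bar\nu,\bar\mu)$ produces a strong-superdifferential element of $G_2$ at $\bar\nu$, and taking its $-1\cdot$ negative (which leaves the geometric tangent space invariant) yields the announced $\xi\in\partial^{-}V(\bar\nu)\cap\Tan^{K}_{\bar\nu}\mathcal{P}_2(\R^d)$.

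\emph{Norm bounds and regularity of $\bar\mu$.} The $L^2_{\bar\mu}$-bound $\|p+\delta\nabla\log\bar\mu\|_{L^2_{\bar\mu}}\le\Lip(U;W_2)$ and $\|\xi\|_{\bar\nu}\le\Lip(V;W_2)$ are immediate from Lemma~\ref{l: bounddiffpenali} (the latter applied with $\delta=0$ to $V$ at $\bar\nu$). For the $L^\infty$ refinement I would replay the smoothing and continuity-equation argument of Lemma~\ref{l: bounddiffpenali}, but controlling $W_1(\mu_t,\mu_{t_0})\le\int_{t_0}^{t}\|v\|_{L^{1}_{\mu_s}}\,ds\le(t-t_0)\|v\|_{L^\infty}$ and using the $W_1$-Lipschitz assumption on $U$ in place of the $W_2$ one; the resulting distributional bound against $C^\infty_c$ vector fields yields the essential estimate $\|p+\delta\nabla\log\bar\mu\|_{L^{\infty}(\supp\bar\mu)}\le\Lip(U;W_1)$. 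The regularity assertions then follow from the identity $\delta\nabla\log\bar\mu=(p+\delta\nabla\log\bar\mu)-p$: the first term is essentially bounded by the previous step, while $p$ is locally bounded on $\supp\bar\mu$ because the Brenier term $x-T^{\bar\nu}_{\bar\mu}(x)$ is the gradient of a convex potential and is therefore locally bounded on the interior of its effective domain. Hence $\nabla\log\bar\mu$ is locally bounded, so $\bar\mu,\bar\mu^{-1}\in W^{1,\infty}_{\loc}$; together with $\nabla\log\bar\mu\in L^2_{\bar\mu}$ this yields $\mathcal{I}(\bar\mu)<\infty$. The main technical obstacle is handling the two different regularity classes of $U$ and $V$ simultaneously: the $W_1$-Lipschitz refinement is precisely what produces the $\delta$-independent control of the viscous term that will be required in the vanishing-viscosity theorem, and it is not available if only $W_2$-Lipschitz regularity is assumed on $U$.
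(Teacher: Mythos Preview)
Your overall plan follows the paper's proof closely: direct method for existence, Proposition~\ref{prop:super} for the sub/super\-differential elements, and Lipschitz-type bounds for the regularity. Two points, however, do not go through as written.

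First, you obtain $\|\xi\|_{\bar\nu}\le\Lip(V;W_2)$ by applying Lemma~\ref{l: bounddiffpenali} with $\delta=0$ to $V$ at $\bar\nu$. That lemma is stated for an absolutely continuous base measure with finite entropy and for map-type differentials in $L^2_{\mu_0}$; here $\bar\nu$ need not be absolutely continuous and $\xi$ is a genuine plan, so the lemma is not available. The paper instead argues directly from the subdifferential inequality: for any $\gamma\in\mathcal{P}_{2,\bar\nu}(\R^{2d})$ and $\theta\in\Gamma_{\bar\nu}(\xi,\gamma)$,
\[
\langle\xi,\gamma\rangle_{\theta,\bar\nu}+o(\|\gamma\|_{\bar\nu})
\le V(\exp_{\bar\nu}(\gamma))-V(\bar\nu)
\le \Lip(V;W_2)\,W_2(\bar\nu,\exp_{\bar\nu}(\gamma))
\le \Lip(V;W_2)\,\|\gamma\|_{\bar\nu},
\]
and then tests with $\gamma=\tfrac{\varepsilon}{\|\xi\|_{\bar\nu}}\cdot\xi$, divides by $\varepsilon$, and lets $\varepsilon\to0$.

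Second, and more substantively, your regularity chain for $\bar\mu$ is circular. You subtract $p$ from the $L^\infty$-controlled quantity $p+\delta\nabla\log\bar\mu$, arguing that $T^{\bar\nu}_{\bar\mu}=\nabla\phi$ is locally bounded ``on the interior of its effective domain''; but without knowing a priori that $\supp(\bar\mu)=\R^d$ you cannot conclude that this interior is all of $\R^d$, nor can you pass from $\nabla\log\bar\mu\in L^\infty_{\loc}$ to $\bar\mu^{-1}\in L^\infty_{\loc}$ (the bound on $\nabla\log\bar\mu$ says nothing where the density vanishes). The paper breaks this circle by first invoking \cite[Lemma~3.3]{DS}, which uses the $W_1$-Lipschitz regularity of $U$ and the maximality of $\bar\mu$ to show directly that the density satisfies $\bar\mu^{-1}\in L^\infty_{\loc}$. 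Once this lower bound is in hand, the Brenier potential is an everywhere-finite convex function, $T^{\bar\nu}_{\bar\mu}$ and hence $p$ are locally bounded on $\R^d$, and your implications then run in the order the paper records: $p\in L^2_{\bar\mu}$ together with $p+\delta\nabla\log\bar\mu\in L^2_{\bar\mu}$ gives $\mathcal{I}(\bar\mu)<\infty$, while $p\in L^\infty_{\loc}$ together with $p+\delta\nabla\log\bar\mu\in L^\infty$ and $\bar\mu,\bar\mu^{-1}\in L^\infty_{\loc}$ gives $\bar\mu\in W^{1,\infty}_{\loc}$.
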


\begin{proof}

The attainment of the maximum follows  the same argument used in the proof of \cite[Proposition~3.1]{DS}. 
The moment penalization ensures that any maximizing sequence 
\((\mu_n,\nu_n)\) remains in a compact subset of 
\(\mathcal{P}_2(\mathbb{R}^d)\times \mathcal{P}_2(\mathbb{R}^d)\) in the weak topology \eqref{weakconvergence}, more precisely $\sup_n \mathcal{M}_2(\mu_n)+ \mathcal{M}_2(\nu_n)< \infty$.
As a consequence, there exists a (not relabeled) subsequence 
\((\mu_n,\nu_n)\) converging weakly to some 
\((\bar\mu,\bar\nu) \in \mathcal{P}_2(\R^d)\times \mathcal{P}_2(\R^d)\).
By the assumed regularity of \(U\) and \(V\), together with the lower 
semicontinuity of the entropy and moments with respect to narrow convergence, the 
functional \(\Phi\) is upper semicontinuous with respect to the weak topology. Consequently, the limit pair \((\bar\mu,\bar\nu)\) is indeed a 
maximizer of \(\Phi\).

Let $(\bar \mu,\bar \nu)\in \mathcal{P}_2(\R^d)\times \mathcal{P}_2(\R^d)$ be a maximizer for $\Phi$. 
We note that $p\in \partial^{+}(U-\delta \mathcal{E})(\bar \mu)$ has been already proved in \cite[Proposition~3.1]{DS}.

The fact that \(\xi\) belongs to the subdifferential $\partial^{-}V(\bar \nu)$ follows directly from Proposition~\ref{prop:super}. Moreover, since \(2\pi\delta\nabla |x|^2 \in \Tan^M_{\bar\nu}\mathcal{P}_2(\R^d)\), we also have \(\xi \in \Tan^K_{\bar \nu}\mathcal{P}_2(\R^d)\). The bound $\|\xi\|_{\bar\nu}\leq \Lip(V;W_2)$ is a consequence of Lemma \ref{l: bounddiffpenali}.

We now focus on the regularity of the optimizer $\bar \mu$. The proof is similar to that done in  \cite[Proposition ~3.1]{DS}, and we just summarize the argument.  
Since $\Lip(U;W_1)< \infty$,  \cite[Lemma~3.3]{DS} yields $\bar \mu^{-1}\in L^{\infty}_{\text{loc}}(\R^d).$ Therefore, $T_{\bar \mu}^{\bar \nu}$ is locally bounded, being a.e.-$\bar \mu$ the gradient of an everywhere defined convex function. In particular, $p$ is locally bounded. So that, we have 
\begin{align*}
    &\begin{cases}
        p \in L^2_{\bar \mu}(\R^d)\\
        p + \delta \nabla \log \bar \mu\in L^2_{\bar \mu}(\R^d), \quad 
    \end{cases}
\Longrightarrow \nabla\log{\bar \mu} \in L^2_{\bar \mu}(\R^d), \text{ i.e. } \mathcal{I}(\bar \mu)<\infty \\
&\begin{cases}
        p \in L_{\loc}^{\infty}(\R^d)\\
        p + \delta \nabla \log \bar \mu\in L^\infty(\R^d)
\end{cases}
\Longrightarrow \nabla\log{\bar \mu} \in L^\infty_{\loc}(\R^d) \underbrace{\Longrightarrow}_{\bar \mu, \bar \mu^{-1} \in L^{\infty}_{\loc}(\R^d)} \bar \mu, \bar \mu^{-1} \in W^{1,\infty}_{\loc}(\R^d).
\end{align*}

\end{proof}

The following result will play a key role in obtaining a proper bound for the viscosity term in the proof of the vanishing viscosity limit.

\begin{lem}\label{BoundTermineViscoso}
Let $\mu \in \mathcal{P}_2(\R^d)$ s.t. $\mathcal{I}(\mu)<\infty$, $\mu \in W^{1,\infty}_{\mathrm{loc}}(\R^d)$ and $\mu^{-1}\in L^{\infty}_{\text{loc}}(\R^d)$. Fix $\nu \in \mathcal{P}_2(\R^d)$.  Then 
\begin{equation}\label{boundlaplaciandistancesquare}
    -\int_{\R^d}\big<\nabla \log \mu(x), x- T^\nu_{\mu}(x)\big>d\mu(x)\leq d.
\end{equation}
\end{lem}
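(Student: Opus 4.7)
The plan hinges on Brenier's theorem: since $\mu\in \mathcal{P}_{2,\text{ac}}(\R^d)$, we may write $T^\nu_\mu=\nabla\phi$ $\mu$-a.e.\ for a proper, l.s.c., convex function $\phi:\R^d\to\R\cup\{+\infty\}$. Setting $\psi(x):=\tfrac{|x|^2}{2}-\phi(x)$ so that $x-T^\nu_\mu(x)=\nabla\psi(x)$, and using $\nabla\mu=\mu\nabla\log\mu$ a.e., the desired inequality reads formally
\begin{equation*}
-\int_{\R^d}\langle\nabla\log\mu,\nabla\psi\rangle\,d\mu \;=\; -\int_{\R^d}\langle\nabla\mu,\nabla\psi\rangle\,dx \;=\; \int_{\R^d}\mu\,\Delta\psi\,dx \;=\; d\;-\;\int_{\R^d}\mu\,\Delta\phi\,dx \;\leq\; d,
\end{equation*}
the last step using that $\Delta\phi$ is a nonnegative Radon measure, as the trace of the distributional Hessian of a convex function. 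This is precisely the Wasserstein analog of the finite-dimensional identity $\varepsilon\,\Delta_x\tfrac{|y-x|^2}{2\alpha}=\tfrac{d\varepsilon}{\alpha}$ advertised in the introduction.

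To make the formal computation rigorous, I propose a mollify-and-cutoff scheme. Let $\phi_\varepsilon:=\phi*\rho_\varepsilon$, which is smooth and convex with $\Delta\phi_\varepsilon\geq 0$ pointwise. Since $\phi$ is locally Lipschitz on the interior of its effective domain and $\mu^{-1}\in L^\infty_{\text{loc}}$ combined with $\int|\nabla\phi|^2 d\mu=\mathcal{M}_2(\nu)<\infty$ forces $\nabla\phi\in L^2_{\text{loc}}(\R^d;dx)$, we have $\nabla\phi_\varepsilon\to\nabla\phi$ in $L^2_{\text{loc}}$ as $\varepsilon\to 0$. Pick $\chi_R\in C_c^\infty(\R^d)$ with $\chi_R\equiv 1$ on $B_R$, $\mathrm{supp}\,\chi_R\subset B_{2R}$, $|\nabla\chi_R|\leq C/R$. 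Integration by parts, legitimate because $\chi_R\mu$ is Lipschitz (by $\mu\in W^{1,\infty}_{\text{loc}}$) and compactly supported, yields
\begin{equation*}
-\int_{\R^d}\chi_R\langle\nabla\log\mu,x-\nabla\phi_\varepsilon\rangle\,d\mu \;=\; d\int_{\R^d}\chi_R\mu\,dx \;-\; \int_{\R^d}\chi_R\mu\,\Delta\phi_\varepsilon\,dx \;+\; \int_{\R^d}\mu\langle\nabla\chi_R,x-\nabla\phi_\varepsilon\rangle\,dx.
\end{equation*}

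Dropping the nonnegative middle term on the right, it remains to pass to the limit first in $\varepsilon\to 0$ and then in $R\to\infty$ while preserving the inequality. For fixed $R$, the $L^2(B_{2R})$ convergence $\nabla\phi_\varepsilon\to\nabla\phi$ together with $\nabla\log\mu\in L^\infty(B_{2R})$ handle the $\varepsilon$-limit. As $R\to\infty$, the boundary error vanishes via $|\nabla\chi_R|\leq C/R$, Cauchy--Schwarz, and $\mathcal{M}_2(\mu)+\mathcal{M}_2(\nu)<\infty$, while the left-hand side converges to the target expression by dominated convergence, exploiting $\nabla\log\mu\in L^2_\mu(\R^d)$ (which is the assumption $\mathcal{I}(\mu)<\infty$) and $x-T^\nu_\mu\in L^2_\mu(\R^d)$. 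The main obstacle is precisely the measure-vs.-function mismatch for $\Delta\phi$ combined with the noncompactness of $\R^d$: one cannot integrate by parts directly against $\Delta\phi$, and the global-in-$\R^d$ regularity hypotheses on $\mu$ are what allow the two limits (mollification and localization) to be carried out while driving all error terms to zero.
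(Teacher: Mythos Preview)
Your proposal is correct and follows essentially the same approach as the paper: Brenier's theorem to write $T^\nu_\mu=\nabla\phi$ with $\phi$ convex, mollification $\phi_\varepsilon=\phi*\rho_\varepsilon$ so that $\Delta\phi_\varepsilon\ge 0$ pointwise, integration by parts against the compactly supported Lipschitz test function $\chi_R\mu$, and then passing to the limit in the mollification and localization parameters. The only cosmetic difference is the order of operations---the paper first sends the mollification parameter to zero to obtain the clean distributional inequality $-\int\langle T^\nu_\mu,\nabla\xi\rangle\,dx\ge 0$ for all nonnegative $\xi\in W^{1,\infty}_c$, and only then specializes to $\xi=\chi_R\mu$, whereas you carry both parameters simultaneously---but the ingredients and the logic are identical.
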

\begin{proof}
From the regularity assumptions on $\mu$, it follows $\text{supp}(\mu)=\R^d$.
Therefore, it is sufficient to prove 
\begin{equation*}
    -\int_{\R^d}\big<\nabla \mu (x), x-T_{\mu}^{\nu}(x)\big>dx \leq d.
\end{equation*}
By Brenier Theorem, see \cite[Proposition ~3.1]{Bre}, and the fact that $\text{supp}(\mu)=\R^d$, we have that $T_{\mu}^\nu=\nabla \phi$ $\mu$-a.e, where $\phi : \R^d \to \R$ is a finite convex function. Being $\phi$ locally Lipschitz, $T^\nu_{\mu}$ is locally bounded. 
We then regularize the transport map by a convolution kernel $\rho_\alpha:=\alpha^{-d}\rho(\frac{x}{\alpha})$, where $\rho \in C^\infty(\R^d)$ is a probability density, compactly supported in the unitary ball $B_1(0)$ and $\alpha>0$. Set $T_{\alpha}:=\nabla \phi *\rho_{\alpha}=\nabla (\phi *\rho_\alpha)$, then $T_\alpha$ inherits the local boundedness of $T_{\mu}^{\nu}$, and $T_{\alpha}$ converges to $T_{\mu}^\nu$ in $L^2_{\text{loc}}(\R^d)$ (see \cite[Theorem ~6 pag. 630]{Evans}). Since convolution preserves convexity, $\phi_\alpha:=\phi *\rho_{\alpha}$ is convex. Therefore, the following inequality holds for every $\alpha>0$
\begin{equation*}
  \div(T_{\alpha}(x))=\text{tr}(D^2\phi_{\alpha}(x))\geq 0 \quad  \forall x \in \R^d.
\end{equation*}

We can then integrate this inequality against non-negative, compactly supported Lipschitz functions to obtain, for all  $\alpha>0$, 
\begin{equation*}
    -\int_{\R^d}\big<T_{\alpha}(x),
     \nabla \xi(x)\big> dx=\int_{\R^d}\div(T_{\alpha}(x))\xi(x) dx\geq 0, \quad \forall \xi \in W^{1,\infty}_c(\R^d) \text{ non negative.}
\end{equation*}
Since test functions are compactly supported, we can pass to the limit as $\alpha \to 0$ to obtain
\begin{equation}\label{inequalityconvexity}
    -\int_{\R^d}\big<T_{\mu}^\nu(x),
     \nabla \xi (x)\big> dx \geq 0, \quad \forall \xi \in W^{1,\infty}_c(\R^d) \text{ non negative}.
\end{equation}

Consider now for $R>0$,  $\chi_{R}(x):=\chi(\frac{x}{R})$, where $\chi \in C^{\infty}_c(\R^d)$ is constantly equal to 1 on $B_1(0)$, $\chi\leq 1$ on $B_2(0)\setminus B_1(0)$ and $\chi$ equal to zero on $\R^d\setminus B_2(0)$. 
We test inequality \eqref{inequalityconvexity} with the non negative function $\xi=\chi_R\mu \in W^{1,\infty}_c(\R^d)$ and we obtain 
\begin{align*}
     -\int_{\R^d} \big<x-T_{\mu}^\nu(x), \nabla \chi_{R} (x)\big> d\mu (x)  &-\int_{\R^d} \big<x-T_{\mu}^\nu(x), \nabla  \mu (x)\big> \chi_{R} (x)dx=\\&=- \int_{\R^d} \big<x-T_{\mu}^\nu(x), \nabla\xi(x)\big>dx \\& =\int_{\R^d}\text{div}(x)\xi (x) dx + \int_{\R^d}\big<T_{\mu}^{\nu}(x),\nabla \xi(x)\big>dx
     \\
    &\underbrace{\leq}_{\eqref{inequalityconvexity}} d. 
\end{align*}
On the other hand, we have the following bound
\begin{equation*}
   \int_{\R^d} \big<x-T_{\mu}^\nu(x), \nabla \chi_{R}(x) \big> d\mu(x) \leq \frac{C}{R}\Big(\int_{\R^d}|x-T_{\mu}^\nu(x)|^2d\mu(x)\Big)^\frac{1}{2}=\frac{C}{R}W_2(\mu,\nu)   
\end{equation*}
that combined with the previous inequality gives
\begin{equation*}
    -\int_{\R^d} \big<x-T_{\mu}^\nu(x), \nabla  \mu(x) \big> \chi_{R}(x) dx\leq \int_{\R^d} \big<x-T_{\mu}^\nu(x), \nabla \chi_{R}(x) \big> d\mu(x) +d \leq \frac{C}{R}W_2(\mu,\nu) + d.
\end{equation*}
Passing to the limit $R \to \infty$, we get the claim.

\end{proof}

\begin{rmk}\label{rmk: Rimannianbound}
The inequality \eqref{boundlaplaciandistancesquare} has a Riemannian flavour: In a $d$-dimensional smooth Riemannian manifold $(M,g)$ with non-negative Ricci curvature we have 
\begin{equation*}
    \Delta_{g} \frac{1}{2}d^2(x,y)\leq d,
\end{equation*}
where $d(x,y)$ is the geodesic distance between $x$ and $y$ in $M$. See \cite[Lemma ~42]{petersen2006riemannian}. In our context, the above inequality can be interpret as a weak analogue of 
\begin{equation*}
\Delta_{\mu}\frac{1}{2}W_2^2(\mu,\nu):=\int_{\R^d}\Delta_x (\mathcal D_\mu \frac{1}{2}W_2^2(\cdot,\nu)(\mu,x))d\mu(x) \leq d.
\end{equation*}
\end{rmk}

The inequality stated in Lemma \ref{BoundTermineViscoso} is a particular case of a more general result of independent interest, that holds at least when $d \geq 2$ for $\Lambda$-geodesically concave functions. Before stating this result, let us recall the following definition.
\begin{defn}
Given $\Lambda \in \R$, we say that $\mathcal{F}:\mathcal{P}_2(\R^d) \to \R\cup\{+ \infty\}$ is $\Lambda$-geodesically convex w.r.t. $W_2$, if for any $\mu_0,\mu_1 \in \text{Dom}(\mathcal{F})$ and  for every 
$W_2$-constant speed geodesic $t \mapsto \mu_t \in \text{Dom}(\mathcal{F})$ connecting  $\mu_0$ and $\mu_1$, $\mathcal F$ is $\Lambda$-convex along this geodesic, i.e. 
$$
\mathcal F(\mu_t)\leq (1-t) \mathcal F(\mu_0) + t\mathcal F(\mu_1)- \frac \Lambda 2 t(1-t)W_2^2(\mu_0,\mu_1), \quad \forall t\in[0,1].
$$
We say that $\mathcal{F}:\mathcal{P}_2(\R^d) \to \R\cup \{-\infty\}$ is $\Lambda$-geodesically concave w.r.t. $W_2$ if $-F$ is  $(-\Lambda)$-geodesically convex.
\end{defn}

\begin{prop}
    \label{generalconcavityestimate}
Suppose $d \geq 2$. Let $\mu \in \mathcal{P}_2(\R^d)$ s.t. $\mathcal{I}(\mu)<\infty$, $\mu^{-1}\in L^{\infty}_{\text{loc}}(\R^d)$ and $\mu \in W^{1,\infty}_{\text{loc}}(\R^d)$. Let $\mathcal{F}: \mathcal P_2(\R^d) \to \R$ be a $\Lambda$-geodesically concave functional and suppose $\mathcal{F}$ to be flat differentiable at $\mu\in\mathcal P_2(\R^d)$. Then $\R^d\ni x \mapsto \mathcal D_{\mu} \mathcal{F} (\mu,x)$ is $\Lambda$-concave, in particular $\R^d\ni x \mapsto \nabla \mathcal{D}_{\mu}\mathcal{F}(\mu,x)$ exists  $\mu$-a.e.. Moreover 
\begin{equation*}
    -\int_{\R^d}\big<\nabla \log \mu (x),\nabla \mathcal{D}_{\mu}\mathcal{F}(\mu,x)\big>d\mu(x)\leq d\Lambda.
\end{equation*}
\end{prop}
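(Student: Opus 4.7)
The plan is to mirror the strategy the authors used in Lemma \ref{BoundTermineViscoso}, but applied to the concave function $\phi(x) := \mathcal{D}_{\mu}\mathcal{F}(\mu,x) - \tfrac{\Lambda}{2}|x|^2$ rather than to the Kantorovich potential. The key preliminary step is to upgrade Corollary \ref{geodesic convexity coro} to a $\Lambda$-quantitative statement: the flat derivative $x \mapsto \mathcal{D}_{\mu}\mathcal{F}(\mu,x)$ is itself $\Lambda$-concave on $\mathrm{supp}(\mu) = \R^d$ (the latter identity following from the assumption $\mu^{-1}\in L^\infty_{\mathrm{loc}}$). Once this is in hand, the remainder of the argument consists of a standard mollification, integration by parts, and cutoff procedure, together with the identity $\int \langle x, \nabla \mu\rangle\, dx = -d$.

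For the first step, applying Lemma \ref{l:convexityalongdirc} to $-\mathcal F$ yields that $\nu \mapsto \mathcal{F}((1-h)\mu + h\nu)$ is $h\Lambda$-geodesically concave. Evaluating this along the $W_2$-geodesic $s \mapsto \delta_{(1-s)x_0+sx_1}$ between two Diracs, for which $W_2^2(\delta_{x_0},\delta_{x_1}) = |x_0-x_1|^2$, gives
\begin{equation*}
\mathcal{F}((1-h)\mu + h\delta_{(1-s)x_0+sx_1}) \geq (1-s)\,\mathcal{F}((1-h)\mu + h\delta_{x_0}) + s\,\mathcal{F}((1-h)\mu + h\delta_{x_1}) + \tfrac{h\Lambda}{2}s(1-s)|x_0-x_1|^2.
\end{equation*}
Subtracting $\mathcal{F}(\mu)$, dividing by $h$, and letting $h \to 0^+$ using the representation formula \eqref{e:FlatderivativeRepresentiation} yields
$$\mathcal{D}_{\mu}\mathcal{F}(\mu,(1-s)x_0+sx_1) \geq (1-s)\,\mathcal{D}_{\mu}\mathcal{F}(\mu,x_0) + s\,\mathcal{D}_{\mu}\mathcal{F}(\mu,x_1) + \tfrac{\Lambda}{2}s(1-s)|x_0-x_1|^2,$$
so $\phi$ is concave on $\R^d$.

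For the second step, concavity ensures that $\phi$ is locally Lipschitz, and the mollification $\phi_\alpha := \phi * \rho_\alpha$ is smooth and concave, so $\Delta \phi_\alpha \leq 0$ pointwise. Testing against the nonnegative function $\xi := \chi_R \mu \in W^{1,\infty}_c(\R^d)$ (admissible because $\mu \in W^{1,\infty}_{\mathrm{loc}}$), integrating by parts, and sending $\alpha \to 0$ by dominated convergence gives $\int \langle \nabla \phi, \nabla(\chi_R \mu)\rangle\, dx \geq 0$. Expanding the gradient and letting $R \to \infty$, the boundary term $\int \mu \langle \nabla \phi, \nabla \chi_R\rangle\, dx$ vanishes (using that a concave function with at most quadratic growth has $|\nabla \phi(x)| \leq C(1+|x|)$, hence $|\langle \nabla\phi,\nabla\chi_R\rangle|\leq C\mathbf{1}_{B_{2R}\setminus B_R}$, combined with $\mu(B_{2R}\setminus B_R)\to 0$). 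The main term converges by dominated convergence, since Cauchy-Schwarz together with $\mathcal{I}(\mu)<\infty$ and $\mathcal{M}_2(\mu)<\infty$ makes $\langle \nabla \phi, \nabla \mu\rangle$ integrable. We are left with $\int \langle \nabla \mathcal{D}_\mu \mathcal{F}, \nabla\mu\rangle\, dx \geq \Lambda \int \langle x,\nabla\mu\rangle\, dx = -d\Lambda$, the last equality being the analogue IBP identity $\int \langle x,\nabla\mu\rangle dx = -d\int d\mu = -d$ (whose boundary terms vanish by the same estimate on $\nabla\chi_R$ and $\mathcal{M}_2(\mu)<\infty$). Rewriting $\nabla \mu = \mu\,\nabla\log\mu$ gives the claim.

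The main difficulty is the quantitative refinement of Corollary \ref{geodesic convexity coro}: once the $\Lambda$-concavity of $x \mapsto \mathcal D_\mu \mathcal F(\mu,x)$ is available, the rest is a careful bookkeeping exercise in the spirit of Lemma \ref{BoundTermineViscoso}. The dimensional restriction $d\geq 2$ enters only through Lemma \ref{l:convexityalongdirc}, whose proof relies on \cite[Theorem 9.1]{CavSavSod23}.
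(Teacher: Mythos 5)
Your proof is correct and follows the same strategy the paper intends: the paper's own proof of Proposition~\ref{generalconcavityestimate} is a one-sentence remark saying to repeat the mollification/cutoff/integration-by-parts argument of Lemma~\ref{BoundTermineViscoso} with the $1$-concave function $\frac{|x|^2}{2}-\phi(x)$ replaced by the $\Lambda$-concave $\mathcal D_\mu\mathcal F(\mu,\cdot)$, and you have supplied exactly the details that remark leaves implicit. Two small points. First, the quantitative $\Lambda$-concavity of $x\mapsto\mathcal D_\mu\mathcal F(\mu,x)$ that you derive ``by hand'' is already established by the chain Lemma~\ref{geodesic convexity}(2) $\Rightarrow$ hypothesis \eqref{e:convexity along Dirac} $\Rightarrow$ Lemma~\ref{l:convexityalongdirc}; what you cite as ``Lemma~\ref{l:convexityalongdirc}'' for the mixture statement is really Lemma~\ref{geodesic convexity}(1), though the reasoning is the same (and the paper's introduction itself mislabels this). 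Second, in passing to the limit $R\to\infty$ you cannot reuse the $\frac{C}{R}W_2(\mu,\nu)$ bound from Lemma~\ref{BoundTermineViscoso}, since $\nabla\mathcal D_\mu\mathcal F$ is no longer a displacement $x-T^\nu_\mu(x)$; your substitute estimate $|\nabla\phi(x)|\leq C(1+|x|)$ (valid because a concave function with quadratic growth has gradient of linear growth) together with $\mathcal M_2(\mu)<\infty$ and $\mathcal I(\mu)<\infty$ is the correct fix and is what the ``follows the same lines'' remark glosses over.
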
   
Before proving the above result, let us introduce the context. Recall that the squared Wasserstein distance is $1$-geodesically concave, i.e.\ the map 
$\mu \mapsto W_2^2(\mu,\nu)$ is 1-concave along Wasserstein geodesics for every $\nu \in \mathcal{P}_2(\R^d)$.  Equivalently, $(\mathcal{P}_2(\R^d), W_2)$ is a geodesic PC space, see \cite[Definition 12.3.1]{AGS} and later discussions.
When $\mu$ is absolutely continuous, the flat derivative of $\mu \mapsto W_2^2(\mu,\nu)$ is given (up to an additive constant) 
by the optimal Kantorovich potential $\phi$ associated with the transport from $\mu$ to $\nu$.   
In particular, denoting by $T_\mu^\nu=\nabla \phi$ the optimal transport map, we have
\[
\mathcal D_\mu W_2^2(\cdot,\nu)(\mu,x)= \frac{|x|^2}{2}-\phi(x),
\]
that explains why the above  proposition is a generalization in dimension $d\geq 2$ of Lemma \ref{BoundTermineViscoso}.

Firstly, we derive, for functions on the Wasserstein space, a sufficient condition  to have a $\Lambda$-convex flat derivative, namely 

\begin{lem}\label{l:convexityalongdirc}
Let $\mathcal{F}: \mathcal{P}_2(\R^d)\to \R$ be a flat differentiable function at  $\mu \in \mathcal{P}_2(\R^d)$, s.t.  there exists $\Lambda=\Lambda(\mu)$ for which the following inequality holds
\begin{equation}\label{e:convexity along Dirac}
    \mathcal{F}((1-h) \mu+ h \delta_{y_t})\leq (1-t)\mathcal{F}((1-h)\mu+h\delta_{y_0}) + t\mathcal{F}((1-h)\mu+h\delta_{y_1})-h\Lambda \frac{t(1-t)}{2}|y_0-y_1|^2, 
\end{equation}
for all $y_0,y_1 \in \R^d$, with $y_{t}:=(1-t)y_{0}+ty_{1}, \, t,h \in [0,1].$

Then, $\R^d\ni x \mapsto \mathcal D_{\mu} \mathcal{F} (\mu,x)$ is $\Lambda$-convex.
\end{lem}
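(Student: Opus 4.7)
The plan is to derive the desired $\Lambda$-convexity of $x \mapsto \mathcal{D}_{\mu}\mathcal{F}(\mu,x)$ by applying the representation formula \eqref{e:FlatderivativeRepresentiation} for the flat derivative to the three points $y_0, y_1, y_t$ appearing in the hypothesis \eqref{e:convexity along Dirac}, and then passing to the limit $h \to 0^+$ in that same inequality. There is no real obstacle here: this is a direct computation, and the only step that requires a note is the legitimacy of using \eqref{e:FlatderivativeRepresentiation} simultaneously at $y_0, y_1, y_t$, which is fine because each of these points lies in $\mathrm{supp}(\mu)$ by assumption, and flat differentiability of $\mathcal F$ at $\mu$ gives the pointwise limit at every point of the support.

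Concretely, I would first rewrite \eqref{e:convexity along Dirac} in a centered form. Fix $y_0, y_1 \in \mathrm{supp}(\mu)$ and $t \in [0,1]$, and set $y_t := (1-t)y_0 + t y_1$ which, by convexity of $\mathrm{supp}(\mu)$ assumed implicitly (or alternatively one restricts to $y_0,y_1$ such that $y_t\in \mathrm{supp}(\mu)$). Subtracting $\mathcal{F}(\mu) = (1-t)\mathcal{F}(\mu) + t\mathcal{F}(\mu)$ from both sides of \eqref{e:convexity along Dirac} yields
\begin{equation*}
\mathcal{F}((1-h)\mu + h\delta_{y_t}) - \mathcal{F}(\mu) \leq (1-t)\bigl[\mathcal{F}((1-h)\mu + h\delta_{y_0}) - \mathcal{F}(\mu)\bigr] + t\bigl[\mathcal{F}((1-h)\mu + h\delta_{y_1}) - \mathcal{F}(\mu)\bigr] - h\Lambda \frac{t(1-t)}{2}|y_0-y_1|^2.
\end{equation*}

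Next, I would divide through by $h > 0$ and take the limit as $h \to 0^+$. By the representation formula \eqref{e:FlatderivativeRepresentiation}, each of the three difference quotients converges to $\mathcal{D}_{\mu}\mathcal{F}(\mu,y_t)$, $\mathcal{D}_{\mu}\mathcal{F}(\mu,y_0)$, and $\mathcal{D}_{\mu}\mathcal{F}(\mu,y_1)$ respectively, while the last term is linear in $h$ and contributes $-\Lambda \tfrac{t(1-t)}{2}|y_0-y_1|^2$ after dividing. Thus
\begin{equation*}
\mathcal{D}_{\mu}\mathcal{F}(\mu,y_t) \leq (1-t)\,\mathcal{D}_{\mu}\mathcal{F}(\mu,y_0) + t\,\mathcal{D}_{\mu}\mathcal{F}(\mu,y_1) - \Lambda \frac{t(1-t)}{2}|y_0-y_1|^2,
\end{equation*}
which is precisely the classical $\Lambda$-convexity of $x \mapsto \mathcal{D}_{\mu}\mathcal{F}(\mu,x)$ on $\mathrm{supp}(\mu)$, concluding the proof.
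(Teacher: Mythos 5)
Your proof is correct and follows essentially the same approach as the paper's: both subtract $\mathcal{F}(\mu)$, divide by $h$, and pass to the limit $h \to 0^+$ using the representation formula \eqref{e:FlatderivativeRepresentiation} at $y_0$, $y_1$, and $y_t$. Your remark about restricting to $y_0,y_1$ with $y_t \in \mathrm{supp}(\mu)$ is already handled by the statement's hypothesis, so there is no gap.
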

\begin{proof}
We derive the desired property from  the explicit formula given in \eqref{e:FlatderivativeRepresentiation} for the flat derivative.
Indeed, consider $y_{0},y_{1}$ and their convex interpolation $y_{t}=(1-t)y_{0}+ty_{1}$ for all $t\in[0,1]$.
Thanks to the assumption \eqref{e:convexity along Dirac} on $\mathcal{F}$, and the representation formula \eqref{e:FlatderivativeRepresentiation}, we have
 \begin{align*}\label{e: boundlambdaconve}
   \mathcal{D}_{\mu}\mathcal{F}(\mu, y_{t})&=\lim_{h\to 0}\frac{\mathcal{F}((1-h) \mu + h \delta_{y_t})-\mathcal{F}( \mu)}{h}\\
   &\leq (1-t)\lim_{h\to 0}\frac{\mathcal{F}((1-h) \mu + h \delta_{y_0})-\mathcal{F}( \mu)}{h} + t\lim_{h\to 0}\frac{\mathcal{F}((1-h) \mu + h \delta_{y_1})
   -\mathcal{F}( \mu)}{h}\\
   &-\lim_{h \to 0}\frac{h{\frac{\Lambda}{2} t(1-t) \vert y_0 -y_1 \vert^2}}{h}\\
   &= (1-t)\mathcal{D}_{\mu}\mathcal{F}( \mu, y_{0}) + t \mathcal{D}_{\mu}\mathcal{F}( \mu, y_{1}) -{\frac{\Lambda}{2} t(1-t) \vert y_0 -y_1 \vert^2}.\\
 \end{align*}
\end{proof}

Secondly, the following Lemma shows a geodesic convexity property along mixtures, i.e. flat convex combination of measures.

\begin{lem}\label{geodesic convexity} Suppose $d\geq2$.
Let $\mathcal{F}: \mathcal{P}_2(\R^d)\to \R$ be a continuous $\Lambda$-geodesically convex function. Let $\mu\in \mathcal P_2(\R^d)$, then
\begin{itemize}
    \item[(1)] $\forall h \in [0,1]$ the function 
    $$
    \nu \mapsto \mathcal{F}((1-h)\mu+ h\nu)
    $$
    is $h\Lambda$-geodesically convex.
    \item[(2)] In addition, if $\mathcal{F}$ is flat differentiable at $\mu$, then  Lemma \ref{l:convexityalongdirc} holds. 
\end{itemize}

\end{lem}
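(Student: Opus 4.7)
The plan is to reduce item (i) to the equivalence between $W_2$-geodesic convexity and total convexity (i.e.\ convexity along generalized geodesics induced by transport couplings) established in \cite[Theorem~9.1]{CavSavSod23}, which is precisely where the restriction $d\ge 2$ enters. The key observation is that, although the curve $t\mapsto (1-h)\mu+h\nu_t$ is generally not a $W_2$-geodesic even when $(\nu_t)_{t\in[0,1]}$ is, it is nevertheless produced by a natural transport coupling between its endpoints, so that total convexity applies and delivers the desired inequality with rate $h\Lambda$.

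More concretely, to prove (i), fix $h\in[0,1]$ and $\nu_0,\nu_1\in\mathcal P_2(\R^d)$. Pick $\pi\in\Gamma_0(\nu_0,\nu_1)$ so that the $W_2$-geodesic between $\nu_0$ and $\nu_1$ is $\nu_t=((1-t)\pi_1+t\pi_2)_{\#}\pi$. Define the coupling
\[
\sigma := (1-h)(\Id,\Id)_{\#}\mu + h\,\pi \;\in\; \Gamma\bigl((1-h)\mu+h\nu_0,\, (1-h)\mu+h\nu_1\bigr),
\]
and observe that $(1-h)\mu+h\nu_t=((1-t)\pi_1+t\pi_2)_{\#}\sigma$, so this flat interpolation is the displacement curve induced by $\sigma$. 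Its transport cost is
\[
\int_{\R^{2d}}|x-y|^2\,d\sigma(x,y) \;=\; h\int_{\R^{2d}}|x-y|^2\,d\pi(x,y) \;=\; h\,W_2^2(\nu_0,\nu_1).
\]
By \cite[Theorem~9.1]{CavSavSod23}, the $\Lambda$-geodesic convexity of $\mathcal{F}$ is equivalent to its $\Lambda$-total convexity, meaning the convex-type inequality holds along every curve induced by a transport coupling, with the squared $W_2$-distance replaced by the cost of the coupling. Applied to $\sigma$, this gives
\[
\mathcal{F}\bigl((1-h)\mu+h\nu_t\bigr)\;\le\;(1-t)\mathcal{F}\bigl((1-h)\mu+h\nu_0\bigr)+t\,\mathcal{F}\bigl((1-h)\mu+h\nu_1\bigr)-\tfrac{h\Lambda}{2}t(1-t)W_2^2(\nu_0,\nu_1),
\]
which is precisely the $h\Lambda$-geodesic convexity of $\nu\mapsto\mathcal{F}((1-h)\mu+h\nu)$.

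For (ii), I would simply specialize (i) to $\nu_0=\delta_{y_0}$ and $\nu_1=\delta_{y_1}$ with $y_0,y_1\in\supp(\mu)$. The unique $W_2$-geodesic between these Diracs is $t\mapsto \delta_{y_t}$ with $y_t=(1-t)y_0+ty_1$, and $W_2^2(\delta_{y_0},\delta_{y_1})=|y_0-y_1|^2$; substitution yields exactly \eqref{e:convexity along Dirac}. The main obstacle in the entire argument is not computational but conceptual, namely recognizing that the mixture curve is not geodesic yet still admits a coupling representation, and invoking the (non-trivial) equivalence of \cite[Theorem~9.1]{CavSavSod23}; once this is in place, the quadratic bookkeeping of the transport cost automatically produces the sharp constant $h\Lambda$.
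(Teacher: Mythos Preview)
Your proof is correct and follows essentially the same approach as the paper: both invoke \cite[Theorem~9.1]{CavSavSod23} to pass from geodesic to total convexity, construct the same coupling $\sigma=(1-h)(\Id,\Id)_\#\mu+h\pi$ (the paper calls it $\gamma_h$), compute its transport cost as $hW_2^2(\nu_0,\nu_1)$, and then specialize to Dirac masses for part~(ii). The only cosmetic difference is notation.
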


\begin{proof}
The proof relies on the result contained in  \cite[Proposition ~9.1]{CavSavSod23} that states: In dimension  $d\geq 2$, if $\mathcal{F} : \mathcal P_2(\R^d)\to \R$ is a continuous $\Lambda$-geodesically convex function, then it is \emph{totally} geodesically convex, namely given $\mu_0,\mu_1\in \mathcal{P}_2(\R^d)$ and {any} $\gamma\in \Gamma(\mu_0,\mu_1)$ we have 
\begin{equation*}
    \mathcal{F}(\mu_t)\leq (1-t)\mathcal{F}(\mu_0)+\mathcal{F}(\mu_1)-\frac{\Lambda}{2} t(1-t)(W_{2}^\gamma(\mu_0,\mu_1))^2 \quad \forall t\in [0,1],
\end{equation*}
where $\mu_t=(\pi_1+t(\pi_2-\pi_1))_{\#}\gamma$, and $(W_{2}^\gamma(\mu_0,\mu_1))^2:=\int_{\R^{2d}}|y-x|^2d\gamma(x,y)$ is defined in \eqref{eq: generalcost}.

Now, let $\mu\in \mathcal P_2(\R^d)$, given $ \nu_0,\nu_1 \in \mathcal P_2(\R^d)$, we  consider the mixture $\tau_{h,t}:=(1-h)\mu+ h \nu_t$, defined for $h\in [0,1]$ and $t\in [0,1]$, where $\nu_t:= (\pi_1+ t(\pi_2-\pi_1))_\# \tilde \gamma$ is the $W_2$-constant speed  geodesic induced by the optimal plan $\tilde \gamma \in \Gamma_{0}(\nu_0,\nu_1)$,  and choose the plan 
$$
\gamma_{h}:=(1-h)(\text{Id}\times \text{Id})_{\#}\mu+h \tilde\gamma.
$$
Observe that $\gamma_{h}\in \Gamma(\tau_{h,0},\tau_{h,1})$, and $\tau_{h,t}=(\pi_1+ t(\pi_2-\pi_1))_{\#}\gamma_{h}$.

Therefore, the total convexity property  of $\mathcal{F}$ yields
\begin{equation*}\label{convexitylemmamixture}
    \mathcal{F}(\tau_{h,t})\leq (1-t) \mathcal{F}(\tau_{h,0})+  t\mathcal{F}(\tau_{h,1})-\frac{\Lambda}{2} t(1-t)(W^{\tilde{\gamma}}_{2}(\tau_{h,0},\tau_{h,1}))^2,
\end{equation*}
then (1) follows  observing that 
$$
(W^{\tilde{\gamma}}_{2}(\tau_{h,0},\tau_{h,1}))^2=h W^2_2(\nu_0,\nu_1).
$$

Assertion (2) follows by choosing for all $y_0, y_1\in \R^d$ the $W_2$-constant speed geodesic $\nu_t=\delta_{y_t}$, where $y_t=(1-t)y_0+ ty_1$, connecting $\nu_0=\delta_{y_0}$ and $\nu_1=\delta_{y_1}$: indeed in this case $\nu_t=\delta_{y_t}$, and the $h\Lambda$-geodesically convexity of $\nu \mapsto \mathcal{F}((1-h)\mu + h \nu)$ restricted to Dirac masses corresponds to \eqref{e:convexity along Dirac}.

\end{proof}

\begin{rmk}
 
 Note that the main difficulty in the proof lies in the fact that, in general, the curve
\[
t \in [0,1] \longmapsto \tau_{h,t}=(1-h)\mu + h \nu_t
\]
is not necessarily a $W_2$-constant speed geodesic, even if \( t \mapsto \nu_t \) is one. The total convexity property, proved when $d\geq 2$ thanks to \cite[Proposition~9.1]{CavSavSod23}, avoid this issue.  Being our argument strongly based on that Proposition, we couldn't remove the constraint $d\geq 2$. 
\end{rmk}

As a byproduct of the previous lemmas, we obtain the following corollary.

\begin{cor}\label{geodesic convexity coro}
Suppose $d\geq 2$.
Let $\mathcal{F}: \mathcal{P}_2(\R^d)\to \R$ be a continuous $\Lambda$-geodesically convex function. If $\mathcal{F}$ is flat differentiable at $\mu\in \mathcal P_2(\R^d)$ then $\R^d \ni x \mapsto \mathcal{D}_{\mu}\mathcal{F}(\mu,x)$  is $\Lambda$-convex   
\end{cor}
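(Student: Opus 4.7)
The plan is to derive this corollary directly by chaining Lemma \ref{geodesic convexity}(2) with Lemma \ref{l:convexityalongdirc}, both of which have just been established. First, because $\mathcal{F}$ is continuous, $\Lambda$-geodesically convex, and flat differentiable at $\mu$, assertion (2) of Lemma \ref{geodesic convexity} grants that $\mathcal{F}$ fulfills the structural inequality \eqref{e:convexity along Dirac} with parameter $\Lambda$: for every $y_0,y_1\in \mathrm{supp}(\mu)$ with $y_t:=(1-t)y_0+ty_1\in \mathrm{supp}(\mu)$, and for every $h,t\in[0,1]$, one has
\[
\mathcal{F}((1-h)\mu+h\delta_{y_t})\leq (1-t)\mathcal{F}((1-h)\mu+h\delta_{y_0})+t\mathcal{F}((1-h)\mu+h\delta_{y_1})-h\Lambda\frac{t(1-t)}{2}|y_0-y_1|^2.
\]
Feeding this inequality into Lemma \ref{l:convexityalongdirc} yields at once that $x\mapsto \mathcal{D}_{\mu}\mathcal{F}(\mu,x)$ is $\Lambda$-convex in the classical Euclidean sense on $\mathrm{supp}(\mu)$, which is the claimed statement (convexity in the plain sense being the case $\Lambda\geq 0$).

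All the genuine analytic content is therefore located in Lemma \ref{geodesic convexity}, which is where I would expect the main obstacle to lie. The crucial point is that, even when $t\mapsto \nu_t$ is a $W_2$-geodesic, the mixture curve $t\mapsto (1-h)\mu+h\nu_t$ is in general \emph{not} a Wasserstein geodesic, so the geodesic convexity hypothesis on $\mathcal F$ cannot be invoked directly along it. The key remedy is the equivalence between $\Lambda$-geodesic convexity and total $\Lambda$-convexity from \cite[Proposition 9.1]{CavSavSod23}, which produces convex-like inequalities along any curve induced by a transport coupling; here one uses precisely the coupling $(1-h)(\mathrm{Id}\times\mathrm{Id})_{\#}\mu + h\tilde\gamma$ with $\tilde\gamma\in\Gamma_0(\nu_0,\nu_1)$, and then specializes $\nu_i=\delta_{y_i}$ to recover \eqref{e:convexity along Dirac}. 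The representation formula \eqref{e:FlatderivativeRepresentiation} for the flat derivative then transfers this convexity, in the limit $h\to 0^+$, to the map $x\mapsto \mathcal{D}_{\mu}\mathcal{F}(\mu,x)$. The dimensional restriction $d\geq 2$ is inherited from \cite[Proposition 9.1]{CavSavSod23} and is the only serious obstacle: in dimension one, where the geodesic/total convexity equivalence is unavailable, this line of proof does not extend.
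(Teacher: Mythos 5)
Your proposal is correct and is exactly the intended derivation: the paper states the corollary without a separate proof precisely because it follows immediately by chaining Lemma \ref{geodesic convexity}(2) (which produces the inequality \eqref{e:convexity along Dirac} from continuity, $\Lambda$-geodesic convexity, total convexity via \cite[Prop.~9.1]{CavSavSod23}, and flat differentiability at $\mu$) with Lemma \ref{l:convexityalongdirc} (which passes to the $h\to 0^+$ limit via \eqref{e:FlatderivativeRepresentiation} to get $\Lambda$-convexity of $x\mapsto\mathcal{D}_\mu\mathcal{F}(\mu,x)$). Your parenthetical caveat that the lemma chain actually yields $\Lambda$-convexity, with plain convexity holding when $\Lambda\geq 0$, is also a fair reading of a small imprecision in the corollary's statement.
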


We are now ready to prove the announced Proposition \ref{generalconcavityestimate}.
\begin{proof}[Proof of Proposition \ref{generalconcavityestimate}]
The argument of the proof follows the same lines of the proof of Lemma \ref{BoundTermineViscoso}. In that context, the 1-concave function $\frac{x^2}{2} - \phi(x)=\mathcal D_\mu \frac{W^2(\cdot,\nu)}{2}(\mu,x)$ is now replaced by the $\Lambda$-concave function $x\mapsto \mathcal{D}_{\mu} \mathcal{F}(\mu, x)$.
\end{proof}

We conclude this section with an application of the above results.

\begin{lem}\label{regularsubsolution}
Let $U^{\varepsilon}$ be a classical subsolution of \eqref{HJBId},  with $a(\mu,x)=\varepsilon \mathrm{Id}_{d \times d}$. 
If $U^\varepsilon$ is s.t. $\Delta_{x}\mathcal{D}_{\mu}U^{\varepsilon}(t,\mu,x)\leq C$ for some $C$ independent of $(t,\mu,x)\in [0,T]\times \mathcal{P}_2(\R^d)\times \R^d$, then $U^{\varepsilon}-\varepsilon C(T-t)$ is a classical subsolution of \eqref{HJBDet}. In particular, the inequality $\Delta_{x}\mathcal{D}_{\mu}U^{\varepsilon}(t,\mu,x)\leq C$ is satisfied with $C=d\Lambda$, provided that $d\geq 2$ and $U^{\varepsilon}$ is a $\Lambda$-geodesically concave functional.
\end{lem}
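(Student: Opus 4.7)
The plan is to proceed in two steps, treating the two assertions in turn. For the first one, I would start from the classical subsolution inequality for $U^\varepsilon$ applied to \eqref{HJBId} at an arbitrary $(t,\mu)\in[0,T)\times\mathcal{P}_2(\R^d)$, and identify the idiosyncratic term with a Laplacian acting on the flat derivative. Using the relation $\partial_\mu U^\varepsilon(t,\mu,x)=\nabla_x\mathcal{D}_\mu U^\varepsilon(t,\mu,x)$ between the Wasserstein gradient and the flat derivative, one has $\tr[\nabla_x\partial_\mu U^\varepsilon]=\Delta_x\mathcal{D}_\mu U^\varepsilon$. The uniform upper bound $\Delta_x\mathcal{D}_\mu U^\varepsilon\leq C$ integrates to $\int_{\R^d}\Delta_x\mathcal{D}_\mu U^\varepsilon(t,\mu,x)\,d\mu(x)\leq C$, and plugging this into the subsolution inequality for \eqref{HJBId} yields
\[
-\partial_t U^\varepsilon(t,\mu)+\int_{\R^d}H\bigl(x,\partial_\mu U^\varepsilon(t,\mu,x),\mu\bigr)\,d\mu(x)\leq \varepsilon C.
\]
To finish, I would perform a linear-in-time shift: setting $V(t,\mu):=U^\varepsilon(t,\mu)-\varepsilon C(T-t)$ preserves $\partial_\mu$ and the terminal value $V(T,\cdot)=U^\varepsilon(T,\cdot)$, while shifting $\partial_t$ by $+\varepsilon C$. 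Substituting into \eqref{HJBDet} absorbs the excess $\varepsilon C$:
\[
-\partial_t V(t,\mu)+\int_{\R^d}H\bigl(x,\partial_\mu V(t,\mu,x),\mu\bigr)\,d\mu(x)\leq \varepsilon C-\varepsilon C=0,
\]
which is the desired classical subsolution property for \eqref{HJBDet}.

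For the second claim, if $\mu\mapsto U^\varepsilon(t,\mu)$ is $\Lambda$-geodesically concave for every $t$, then $-U^\varepsilon(t,\cdot)$ is a continuous, flat-differentiable, $(-\Lambda)$-geodesically convex functional. Applying Lemma \ref{geodesic convexity}(2) (which is the step requiring $d\geq 2$) to $-U^\varepsilon(t,\cdot)$ produces the convex-combination inequality \eqref{e:convexity along Dirac} with parameter $-\Lambda$, and Lemma \ref{l:convexityalongdirc} then implies that $x\mapsto -\mathcal{D}_\mu U^\varepsilon(t,\mu,x)$ is $(-\Lambda)$-convex on $\supp(\mu)$. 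In particular $D^2_x(-\mathcal{D}_\mu U^\varepsilon)\geq -\Lambda\,\Id$ on $\supp(\mu)$, and taking traces yields $\Delta_x\mathcal{D}_\mu U^\varepsilon(t,\mu,x)\leq d\Lambda$ there. Since only the values on $\supp(\mu)$ enter the integrated bound used in the first part, the conclusion follows by invoking it with $C=d\Lambda$.

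The argument is essentially a sequence of identifications and sign-trackings, so the main obstacle is modest and consists in bookkeeping: matching the signs when passing from $\Lambda$-geodesic concavity of $\mathcal{F}$ to $(-\Lambda)$-geodesic convexity of $-\mathcal{F}$, identifying $\tr[\nabla_x\partial_\mu U^\varepsilon]$ with $\Delta_x\mathcal{D}_\mu U^\varepsilon$ via $\partial_\mu=\nabla_x\mathcal{D}_\mu$, and observing that a pointwise Hessian bound on $\supp(\mu)$ rather than on all of $\R^d$ is enough after integration against $\mu$. The dimensional restriction $d\geq 2$ is inherited from \cite{CavSavSod23} via Lemma \ref{geodesic convexity}.
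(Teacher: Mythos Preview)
Your proposal is correct and follows essentially the same route as the paper: bound the idiosyncratic term by $\varepsilon C$ via $\tr[\nabla_x\partial_\mu U^\varepsilon]=\Delta_x\mathcal{D}_\mu U^\varepsilon$, then absorb it by a linear-in-time shift; for the second claim the paper simply invokes Corollary~\ref{geodesic convexity coro}, which you unfold via Lemma~\ref{geodesic convexity}(2) and Lemma~\ref{l:convexityalongdirc} to get the $\Lambda$-concavity (hence the trace bound $d\Lambda$) on $\supp(\mu)$. Note also that your shift $V=U^\varepsilon-\varepsilon C(T-t)$ has the sign that actually makes the computation (and the subsequent comparison application) work, whereas the statement as written has $U^\varepsilon-\varepsilon C(t-T)$.
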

\begin{proof}
If $U^{\varepsilon}$ is a classical subsolution of \eqref{HJBId}, then $U^{\varepsilon}$ is partially $C^2$. In particular,  $\Delta_x\mathcal{D}_{\mu}U^{\varepsilon}(\mu,x)$  is well defined in $\text{supp}(\mu)$. Moreover,
\begin{align*}
-\partial_{t}U^{\varepsilon}+\mathcal{H}((\Id\times \partial_{\mu}U^{\varepsilon})_{\#}\mu)&=-\partial_{t}U^{\varepsilon}+\int_{\R^d}H(x,\partial_{\mu}U^{\varepsilon}, \mu)d\mu(x)\\
&\underbrace{\leq}_{U^{\varepsilon} \text{classical subsolution}} \varepsilon\int_{\R^d}\Delta_x \mathcal{D}_{\mu} U^{\varepsilon}(t,\mu,x)d\mu(x)\leq \varepsilon C.
\end{align*}

The second part of the statement is an immediate consequence of Corollary \ref{geodesic convexity coro}. 
\end{proof}

\section{Vanishing Viscosity Limit}\label{s: vvl}

In this section, we state and prove the vanishing viscosity result.

\begin{thm}\label{vanishing}

Suppose Assumptions \ref{a: DataHyp} hold. Let $U^{\varepsilon}$ and $U$ be two bounded viscosity solutions of  \eqref{HJBId} and \eqref{HJBDet}, respectively, s.t. that $ U^{\varepsilon}(T,\mu)=U(T,\mu)=\mathcal{G}(\mu)$.
Additionally, suppose $U^{\varepsilon}$ to be uniformly in time $W_1$-Lipschitz continuous for each $\varepsilon$, and $U$ to be uniformly in time $W_2$-Lipschitz continuous. Moreover, suppose $U$ to be jointly continuous with respect to the product topology of
$[0,T]\times\mathcal P_2(\mathbb R^d)$,
where $\mathcal P_2(\mathbb R^d)$ is endowed with the weak convergence as in Definition \ref{weakconvergence}.
Then, there exists a constant $C\geq 0$, depending only on the data, such that  
\begin{equation*} \label{rate}
\sup_{(t,\mu)\in[0,T]\times \mathcal{P}_2(\R^d)} \vert U^{\varepsilon}(t,\mu)- U(t,\mu) \vert \leq C\sqrt{\varepsilon}.
\end{equation*}
\end{thm}

\begin{proof}
The first part of the proof is mainly inspired by the proof of \cite[Theorem ~2.10]{Alekos}.

Firstly, we focus on the upper bound
\begin{equation}\label{error vanishing}
    E^+_{\varepsilon}:=\sup_{(t,\mu)\in [0,T]\times \mathcal{P}_2(\R^d)} U^{\varepsilon}(t,\mu)-U(t,\mu).
\end{equation}
It will be clear through the proof that a symmetry argument, inverting the role of the two functions, will give also the bound for 
$$
E_{\varepsilon}^{-}:=\sup_{(t,\mu)\in[0,T]\times \mathcal{P}_2(\R^d)}U(t,\mu)-U^{\varepsilon}(t,\mu).
$$
\\

{\it Step 1: Error quantification term.}\\

 We want to prove that
$$
E^{+}_{\varepsilon}\leq C\sqrt{\varepsilon}.
$$
We note that $E^{+}_{\varepsilon}$ is bounded by the boundedness assumptions on the two functions $U^\varepsilon$, $U$. 
Clearly, if $E^{+}_{\varepsilon}\leq 0$ then \eqref{error vanishing} holds. 
Therefore, we assume $E^{+}_{\varepsilon}>0$.\\

{\it Step 2: Doubling variables penalization.}\\

We invoke the usual doubling variables argument by considering the penalized function
\begin{align*}	
\Phi_{\delta,\alpha}(s,t,\mu,\nu) := & \, U^{\varepsilon}(s,\mu) -\delta( \mathcal{E}(\mu) + \pi \mathcal{M}_2(\mu)) - U(t,\nu) -\delta \pi \mathcal{M}_2(\nu)-\frac{1}{2\alpha} W_2^2(\mu,\nu)
		 \\
   &\! - \frac{1}{2\alpha}|t-s|^2 -\frac{2T-t-s}{4T}E^{+}_{\varepsilon},
\end{align*}
	over $[0,T] \times [0,T]  \times \mathcal{P}_2(\R^d) \times \mathcal{P}_2(\R^d)$. Here $\alpha,\delta>0$, with $\alpha$ to be chosen in terms of $\varepsilon$.\\

By adapting Lemma~\eqref{lem: comparison} to functions dependent on time, the function $\Phi_{\delta,\alpha}$
attains its maximum. We denote by
$(\bar s,\bar t,\bar \mu,\bar \nu)\in [0,T]^2\times\mathcal{P}_2(\mathbb{R}^d)^2$
a point where this maximum is achieved.\\

Now, being $\mathcal{E}^*:=\mcl E+ \pi\mathcal{M}_2\ge 0$, we have that the function 
	\[
		\delta \mapsto M_{\delta,\alpha} := \max_{(s,t,\mu,\nu) \in [0,T]^2 \times  (\mcl P_2(\R^d))^2} \Phi_{\delta,\alpha}(s,t,\mu,\nu)
	\]
	is nonincreasing. Therefore, for fixed $\alpha> 0$, the limit
	\[
		M_{\alpha} := \lim_{\delta \to 0} M_{\delta,\alpha}
	\]
    exists.  Moreover, from the inequality
	\begin{align*}
		M_{\delta,\alpha} = \Phi_{\delta,\alpha}(\bar s, \bar t, \bar \mu, \bar \nu)\leq M_{\delta/2,\alpha} - \frac{ \delta }{2}( \mcl E(\bar \mu) +\pi\mathcal{M}_2(\bar \mu)) -\frac{\delta \pi}{2}\mathcal{M}_2(\bar \nu).
	\end{align*}
    we deduce 
    \begin{equation*}
        \frac{ \delta }{2}( \mcl E(\bar \mu) +\pi\mathcal{M}_2(\bar \mu)) +\frac{\delta \pi}{2}\mathcal{M}_2(\bar \nu)\leq M_{\delta/2,\alpha} - M_{\delta,\alpha} \xrightarrow{\delta \to 0} 0 
    \end{equation*}
 for fixed $\alpha > 0$. In particular,
	\[
		\lim_{\delta \to 0} \delta \mcl E^*(\bar \mu) = 0.
	\]
	Moreover, since $\mathcal{E}^{*}$ satisfies $\mcl E^*(\bar \mu)\geq  \frac{\pi}{2} \mathcal{M}_2(\bar \mu) -\frac{d}{2}\log 2$, for fixed $\alpha> 0$,  
	\begin{equation}\label{moments:small}
	 \lim_{\delta \to 0}\delta (\mathcal{M}_2(\bar \mu)+ \mathcal{M}_2(\bar \nu))=0.
	\end{equation}
Therefore, both entropy and moment penalizations vanish with $\delta$. \\

Now, rearranging the terms in the inequality $ \Phi_{\delta,\alpha}(\bar s, \bar t, \bar \mu, \bar \mu)  \le \Phi_{\delta,\alpha}(\bar s, \bar t, \bar \mu, \bar \nu)$, leads to the inequality
	\begin{align*}
		\frac{1}{2\alpha} W^2_2(\bar \mu, \bar \nu) &\le  U^{\varepsilon}(\bar s, \bar \mu) - U^{\varepsilon}(\bar s, \bar \mu) +  U(\bar t, \bar \mu) -  U(\bar t, \bar \nu)\\
        &=  U(\bar t, \bar \mu) -  U(\bar t, \bar \nu)
	\end{align*}
 By the uniform in time $W_2$-Lipschitz regularity of $ U$, we recover the following bound 

\begin{equation*}
		\frac{1}{2\alpha} W^2_2(\bar \mu, \bar \nu) \leq \Lip(U; W_2) W_2(\bar \mu, \bar \nu) .
	\end{equation*}
In particular,
\begin{equation} \label{diagonal: smallt}W_2(\bar \mu,\bar \nu)\leq 2 \alpha \Lip(U; W_2).
\end{equation}
We have a similar estimate in time as a consequence of the following inequality $\Phi_{\delta,\alpha}(\bar s, \bar t, \bar \mu, \bar \nu)  \le \Phi_{\delta,\alpha}(\bar s, \bar s, \bar \mu, \bar \nu)$ that yields
\begin{equation*}
    \frac{|\bar t-\bar s|^2}{2\alpha}\leq U(\bar t, \bar \nu)-U(\bar s,\bar \nu)+\frac{\bar t-\bar s}{4T}E_{\varepsilon}^{+},
\end{equation*}
which implies, thanks to the uniform Lipschitz continuity in time of $U$, ensured by Lemma \ref{l: Lipschitzregularityintime}, and the boundedness of $E_{\varepsilon}^{+}$,
\begin{equation}\label{time boundcomparison}
    |\bar t-\bar s|\leq C \alpha.
\end{equation}\\

We now consider the three cases: either $\bar s=T$, or $\bar t=T$, or $\bar t,\bar s<T.$\\

{\it Step 3: Error estimate at the terminal time.}\\

The case $(\bar s,\bar t) \in \{T\}\times [0,T].$ We consider the inequality $\Phi_{\delta,\alpha}(t,t,\mu,\mu)\leq \Phi_{\delta,\alpha}(\bar s, \bar t, \bar \mu,,\bar \nu)$ $\forall (t,\mu)\in [0,T]\times \mathrm{Dom}(\mathcal{E})$, we get 
\begin{align*}
    U^{\varepsilon}(t,\mu)-U(t,\mu)-\delta \mathcal{E}(\mu)-2\delta \pi \mathcal {M} (\mu)&\leq \mathcal{G}(\bar \mu)-U(\bar t, \bar \nu)+ \frac{2T-2t}{4T}E_{\varepsilon}^+\\
    &\leq  \mathcal{G}(\bar \mu)-\mathcal{G}(\bar \nu)+ \mathcal{G}(\bar \nu)-U(\bar t, \bar \nu) +\frac{2T-2t}{4T}E_{\varepsilon}^+\\
    &\leq \Lip(\mathcal{G};W_2)W_2(\bar \mu,\bar \nu)+\Lip_t(U)|T-\bar t|+\frac{2T-2t}{4T}E_{\varepsilon}^+\\
    &\underbrace{\leq}_{\eqref{diagonal: smallt},\eqref{time boundcomparison}}C\alpha+\frac{1}{2}E_{\varepsilon}^+, 
\end{align*}
for all $(t,\mu) \in [0,T]\times \mathrm{Dom}(\mathcal{E})$. Having obtained a bound from above that does not depend on $\delta$ we can send it to zero. Therefore,
$$
 U^{\varepsilon}(t,\mu)-U(t,\mu)\leq C\alpha+\frac{1}{2}E_{\varepsilon}^+ \quad  \forall(t,\mu)\in [0,T]\times \mathrm{Dom}(\mathcal{E}). 
$$
Passing to the supremum in the LHS and recalling that $\overline{\mathrm{Dom}(\mathcal{E})}^{W_2}=\mathcal{P}_2(\R^d)$ we get
$$
E_{\varepsilon}^+\leq C \alpha+ \frac{1}{2}E_{\varepsilon}^{+}.
$$
Finally, the choice $\alpha=\sqrt{\varepsilon}$ gives the claim in this scenario.

The case $(\bar s,\bar t) \in [0,T]\times  \{T\}$ is completely analogous.\\

{\it Step 4: The case $(\bar s,\bar t)\in [0,T) \times [0,T)$: Application of the definitions of viscosity solution.}\\

By adapting Lemma \ref{lem: comparison} to functions dependent on time, it follows
 \begin{equation*}
 \begin{cases}
     &(\frac{\bar s- \bar t}{\alpha} - \frac{1}{4T}E_{\varepsilon}^{+}, \mu \otimes \delta_{\frac{1}{\alpha}(x-T_{\bar{\mu}}^{\bar{\nu}}(x)) +2\pi \delta x}) \in \partial^+( U^{\varepsilon}-\delta \mathcal{E})(\bar s, \bar \mu)\\
     & (\frac{\bar s- \bar t}{\alpha}+\frac{1}{4T}E_{\varepsilon}^{+}, \bar \Sigma) \in \partial^- U(\bar t, \bar \nu),
\end{cases}     
 \end{equation*}
 where $\bar \Sigma=(\pi_1,\frac{\pi_1-\pi_2}{\alpha} - 2 \pi \delta \pi_2 )_{\#}\bar \sigma$, with $\bar \sigma \in \Gamma_{0}(\bar \nu,\bar \mu)$ s.t. $(\pi_2,\pi_1)_{\#}\bar\sigma=(\Id\times T_{\bar \mu}^{\bar \nu})_{\#}\bar \mu$.
Moreover, we have $\mcl I(\bar \mu)< \oo$. Therefore, we can apply the Definitions \ref{D:soln} and \ref{D:solndet} to obtain the following inequalities 
	\begin{align*}
	\frac{1}{4T}E_{\varepsilon}^{+}-\frac{\bar s- \bar t}{\eta} &+ \int_{\R^d}  H \left(x,\left(\frac{1}{\alpha}(x-T_{\bar{\mu}}^{\bar{\nu}}(x)) +2\pi \delta x\right), \bar{\mu} \right)d\bar{\mu}(x) \\
		&+\varepsilon \int_{\R^d} \nabla \log \bar{\mu}(x) \cdot \left( \frac{1}{\alpha}\left(x-T_{\bar{\mu}}^{\bar{\nu}}(x)\right) +2\pi \delta x \right)d\bar \mu(x) \leq K(\varepsilon)\delta,
	\end{align*}
	and
	\begin{align*}
		- \frac{1}{4T}E_{\varepsilon}^{+} {-}\frac{\bar s-\bar t}{\eta}&+  \int_{\R^d \times \R^d } H \left(y, \frac{1}{\alpha}(x-y) - 2 \pi\delta  y , \bar\nu\right)d\bar \sigma (y,x) \geq 0.
	\end{align*}

Here, we call $K(\varepsilon)$ the penalization constant in the Definition \ref{D:soln}, stressing its possible dependence on $\varepsilon$.
Subtracting the two inequalities, we obtain the following.
	\[
		\frac{1}{2T}E_{\varepsilon}^{+} \le K(\varepsilon) \delta + I_1 + I_2 + I_3,	
	\]
  where    
	\begin{align*}
		I_1 := \int_{\R^d \times \R^d}  \left[H \left( y,\frac{1}{\alpha}(x-y) + 2 \pi \delta y , \bar{\nu} \right) -   H \left( x, \left(\frac{1}{\alpha}(x-y) +2\pi \delta x\right), \bar{\mu} \right)\right] d\bar{\sigma}(y,x),
	\end{align*}
	\begin{align*}
		I_2 &:=  -\varepsilon \int_{\R^d} \nabla \log \bar{\mu}(x) \cdot \frac{1}{\alpha}(x-T_{\bar{\mu}}^{\bar{\nu}}(x))   d\bar \mu(x),
	\end{align*}
	and
	\begin{align*}
		I_3 := -2\pi \delta \varepsilon \int_{\R^d} \nabla \log \bar{\mu}(x) \cdot  x  d\bar \mu(x). 
	\end{align*}

    In what follows, the constant $c$ denotes a constant that changes from line to line, and is independent of $\alpha,\delta$ and $\varepsilon$.\\

{\it Step 5: Estimate of the integrals $I_1 \& I_3.$}\\

Using Cauchy Schwarz inequality and \eqref{moments:small}, \eqref{diagonal: smallt} we get 
\begin{align*}
    |I_1|&\leq c \int_{\R^d \times \R^d}\Big(1+\left|\frac{x-y}{\alpha}\right|+\delta |x| + \delta |y|\Big)\Big(|x-y|+\delta |y|+ \delta|x|+W_2(\bar \mu,\bar \nu)\Big)d\bar \sigma \\
    &\leq c W_2(\bar \mu,\bar \nu) + \frac{W^2_2(\bar \mu,\bar \nu)}{2\alpha} + \delta (\mathcal{M}^\frac{1}{2}(\bar \mu) +\mathcal{M}^\frac{1}{2}(\bar \nu))+ \delta (\mathcal{M}(\bar \mu)+ \mathcal{M}(\bar \nu))\\
    &\underbrace{\leq}_{\eqref{diagonal: smallt},\eqref{moments:small}} c {\alpha}+ \delta^{\frac{1}{2}}+ \delta (\mathcal{M}(\bar \mu)+ \mathcal{M}(\bar \nu)).
\end{align*}

The third integral $I_{3}$ can be bounded thanks to the fact that $\bar \mu \in W_{\loc}^{1,\infty}$and $ \bar \mu^{-1} \in  L_{\loc}^\infty$. Indeed, in this case, as in the proof in \cite[Theorem ~2.14]{DS}, we can perform an integration by parts and prove
\begin{equation*}
I_3= 2\pi\delta \varepsilon \int_{\R^d} \div(x)d\bar \mu(x) =  2\pi d\delta \varepsilon\leq c\delta\varepsilon.
\end{equation*}

{\it Step 6: Estimate of the viscosity term.}\label{step: estim}\\

Adapting Lemma \ref{lem: comparison} to functions depending on time, the measure $\bar \mu$ has enough regularity to invoke Lemma \ref{BoundTermineViscoso}. Therefore, we obtain
\begin{equation*}  
I_2=-\varepsilon\int_{\R^d}\big<\nabla \log \bar \mu, \frac{x- T^{\bar \nu}_{\bar \mu}(x)}{\alpha}\big>d\bar \mu\leq \frac{ \varepsilon}{\alpha}d.
\end{equation*}

{\it Step 7: Conclusion.}\\

In summary, the previous estimates give
 \begin{equation*}
\frac{1}{2T}E_{\varepsilon}^+  \leq K(\varepsilon)\delta + c(\alpha + \frac{\varepsilon}{\alpha}) + \delta^{\frac{1}{2}} +\delta(\mathcal{M}(\bar \mu)+ \mathcal{M}(\bar \nu)),
\end{equation*}
where the constant $c$ is independent of $\varepsilon, \alpha, \delta$.
Now we pass to the limit $\delta \to 0$ and, recalling the vanishing behavior of the penalizations \eqref{moments:small}, we get

$$\frac{1}{2T}E_{\varepsilon}^+ \leq c (\alpha+ \frac{\varepsilon}{\alpha})$$

We then optimize w.r.t. $\alpha$, so that $\alpha=\sqrt{\varepsilon}$.
With this choice of the  we finally get 
$$
\frac{1}{2T}E_{\varepsilon}^+\leq c \sqrt{\varepsilon}.
$$

By the considerations at the beginning of the proof, the  same argument allows to obtain the bound
$$
E_{\varepsilon}^{-}\leq C \sqrt{\varepsilon}.
$$

\end{proof}

\begin{rmk}
The boundedness assumption on $U$ and $U^\varepsilon$ can be relaxed to a more general growth condition, as already observed in \cite{DS}. We also observe that the additional continuity of $U$ w.r.t. the weak convergence can be dropped in the case the equation is considered in $\mathcal{P}(\T^d)$. 
\end{rmk}

\begin{rmk}
As in the finite-dimensional setting, the rate of convergence follows from the regularity assumptions on the Hamiltonian: modifying its local modulus of continuity directly affects the convergence speed. The argument can be adapted to this framework without additional difficulty. Finally, we can also handle the case of terminal costs depending on the viscosity parameter $\varepsilon$ thanks to the stability property established in \cite[Theorem ~4.1]{DS}.  
\end{rmk}
\begin{rmk}
The above method yields an optimal convergence rate. For a Hamiltonian satisfying Assumption \ref{a: DataHyp}, this rate is the best one can expect without assuming convexity of $H$, since the finite-dimensional setting is embedded in this framework. It would be interesting to investigate whether this bound can be improved in the convex case, as independently shown in \cite{cirant2025convergenceratesvanishingviscosity, chaintron2025optimalrateconvergencevanishing}.
\end{rmk}

We end this section with a one-sided improvement of the previous bound in a more restrictive case. 

\begin{prop}
Under the assumption of Theorem \ref{vanishing}, if
 $U^{\varepsilon}$ is a classical subsolution of \eqref{HJBId} s.t. $\Delta_x \mathcal D_\mu U^{\varepsilon}(t,\mu,x)\leq C$ for some $C$ independent of $(t,\mu,x)\in [0,T]\times \mathcal{P}_2(\R^d)\times \R^d$.
Then 
\begin{equation*}
    U^{\varepsilon}(t,\mu)-U(t,\mu)\leq C(T-t)\varepsilon, \quad \forall (t,\mu)\in [0,T]\times \mathcal P_2(\R^d).
\end{equation*}
In particular, if $d \geq 2$, and $\mu \mapsto U^{\varepsilon}(t,\mu)$ is geodesically $\Lambda$-concave, the result holds with $C=d\Lambda.$
\end{prop}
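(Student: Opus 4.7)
The plan is to reduce the statement to a comparison principle for \eqref{HJBDet} applied to a classical subsolution built from $U^{\varepsilon}$. As a time-dependent version of Lemma~\ref{regularsubsolution}, the function
\[
W(t,\mu) := U^{\varepsilon}(t,\mu) - \varepsilon\int_{t}^{T} C(s)\,ds
\]
is a classical subsolution of \eqref{HJBDet} with terminal value $W(T,\cdot)=\mathcal{G}$. Indeed $-\partial_{t}W = -\partial_{t}U^{\varepsilon} - \varepsilon C(t)$, while the classical equation \eqref{HJBId} for $U^{\varepsilon}$ rearranges to
\[
-\partial_{t}U^{\varepsilon}+\int_{\R^{d}} H\bigl(x,\partial_{\mu}U^{\varepsilon},\mu\bigr)\,d\mu = \varepsilon\int_{\R^{d}} \Delta_{x}\mathcal{D}_{\mu}U^{\varepsilon}\,d\mu \leq \varepsilon C(t),
\]
and summing the two relations yields $-\partial_{t}W+\int H\,d\mu \leq 0$ pointwise. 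For constant $C$ one recovers $W=U^{\varepsilon}-\varepsilon C(T-t)$.

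Next I would compare $W$ and $U$, both concerning the deterministic equation with the same terminal data. Since $W$ is smooth, it is tautologically a viscosity subsolution of \eqref{HJBDet} in the sense of Definition~\ref{D:solndet} (and also in the stronger sense of \cite{BSOT}). One then obtains $W\leq U$ on $[0,T]\times\mathcal{P}_{2}(\R^{d})$ either by invoking the \cite{BSOT} comparison principle, or by re-running the doubling variables argument of Theorem~\ref{vanishing} specialized to $\varepsilon=0$; the latter simplifies considerably, since the viscous term $I_{2}$ disappears and no entropy penalization of $\mu$ is required, while the weak continuity of $U$ supplies the compactness needed to localize the maximum. Rearranging $W\leq U$ produces
\[
U^{\varepsilon}(t,\mu)-U(t,\mu)\leq \varepsilon\int_{t}^{T} C(s)\,ds,
\]
equal to $\varepsilon C(T-t)$ in the constant case. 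The main obstacle is precisely this comparison step: the paper does not state a standalone comparison result for \eqref{HJBDet} on the whole space in the sense of Definition~\ref{D:solndet}, so one must justify carefully that smoothness of $W$ together with the weak continuity hypothesis on $U$ is enough to run the maximization-and-touching argument.

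For the final assertion, assume $d\geq 2$ and that $\mu\mapsto U^{\varepsilon}(t,\mu)$ is $\Lambda$-geodesically concave. Equivalently, $-U^{\varepsilon}(t,\cdot)$ is $(-\Lambda)$-geodesically convex and flat differentiable, so Lemma~\ref{geodesic convexity} applied to $-U^{\varepsilon}(t,\cdot)$ supplies the hypothesis of Lemma~\ref{l:convexityalongdirc} with parameter $-\Lambda$. Lemma~\ref{l:convexityalongdirc} then yields that $x\mapsto -\mathcal{D}_{\mu}U^{\varepsilon}(t,\mu,x)$ is classically $(-\Lambda)$-convex on $\mathrm{supp}(\mu)$, that is $\nabla_{xx}^{2}\mathcal{D}_{\mu}U^{\varepsilon}\leq \Lambda\, I_{d}$, hence $\Delta_{x}\mathcal{D}_{\mu}U^{\varepsilon}\leq d\Lambda$ on $\mathrm{supp}(\mu)$. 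The first part of the proof then applies with $C\equiv d\Lambda$, giving the stated bound with constant $d\Lambda$.
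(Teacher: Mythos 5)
Your proposal matches the paper's proof: the paper also constructs a classical subsolution of \eqref{HJBDet} by shifting $U^{\varepsilon}$ (Lemma \ref{regularsubsolution}) and then invokes the comparison principle of \cite[Theorem~2.11]{BSOT}, and your geodesic-concavity derivation reproduces the ``in particular'' clause of Lemma \ref{regularsubsolution} via Corollary \ref{geodesic convexity coro}. The only differences are presentational: you allow a time-dependent $C(\cdot)$ (which the paper's one-line proof does not bother with, since the stated bound is for constant $C$) and you flag explicitly the need to justify comparison on all of $\R^d$ — the paper implicitly relies on the weak-continuity hypothesis inherited from Theorem~\ref{vanishing}, as discussed in the remark following Definition~\ref{D:solndet}, so this concern is addressed rather than a gap.
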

\begin{proof}
It is sufficient to observe that we can apply Lemma \ref{regularsubsolution} and then the comparison principle in \cite[Theorem ~2.11]{BSOT}.
\end{proof}

\section{Acknowledgments}

This research is supported by the project King Abdullah University of Science and Technology Research Funding (KRF) under award no. CRG2024-6430.6.  and by the project PRIN 2022 (prot. 2022W58BJ5)  “PDEs and optimal control methods in mean field games, population dynamics and multi-agent models". While this work was written GCS and DT were associated to INdAM and the group GNAMPA. DT was partially supported by ``iNEST: Interconnected Nord-Est Innovation Ecosystem"" funded under the National Recovery and Resilience Plan (NRRP), Mission 4 Component 2 Investment 1.5 - Call for tender No. 3277 of 30 December 2021 of Italian Ministry of University and Research funded by the European Union - NextGenerationEU, Project code: ECS00000043, Concession Decree No. 1058 of June 23, 2022, CUP C43C22000340006. The authors thank Averil Aussedat, Charles Bertucci and Alekos Cecchin for the fruitful discussions.

\bibliographystyle{plain} % We choose the "plain" reference style
\bibliography{refs}

\end{document}